\documentclass[12pt]{tac}

\usepackage[T1]{fontenc}
\usepackage[utf8]{inputenc}
\usepackage{amsmath}
\usepackage{amssymb}
\usepackage[all]{xy}
\usepackage{amscd,verbatim}
\usepackage{enumitem}

\usepackage{color}

\newcommand{\Hom}{\operatorname{Hom}}

\newcommand{\St}{\operatorname{St}}
\newcommand{\End}{\operatorname{End}}

\newcommand{\Spec}{\operatorname{Spec}}

\newcommand{\Gal}{\operatorname{Gal}}
\newcommand{\Id}{\operatorname{Id}}
\newcommand{\res}{\operatorname{res}}
\newcommand{\cores}{\operatorname{cores}}
\renewcommand{\lim}{\varprojlim}
\newcommand{\colim}{\varinjlim}

\newcommand{\sA}{\mathcal{A}}
\newcommand{\sB}{\mathcal{B}}
\newcommand{\sC}{\mathcal{C}}
\newcommand{\sD}{\mathcal{D}}
\newcommand{\sE}{\mathcal{E}}

\newcommand{\sM}{\mathcal{M}}

\newcommand{\C}{\mathbf{C}}

\newcommand{\G}{\mathbb{G}}

\renewcommand{\P}{\mathbf{P}}
\newcommand{\Q}{\mathbf{Q}}

\newcommand{\Z}{\mathbf{Z}}

\newcommand{\usA}{\mathbf{\mathcal{A}}}
\newcommand{\usB}{\mathbf{\mathcal{B}}}
\newcommand{\un}{\mathbf{1}}

\newcommand{\Ex}{{\operatorname{\mathbf{Ex}}}}
\newcommand{\Funct}{\operatorname{\mathbf{Funct}}}
\renewcommand{\Vec}{\operatorname{\mathbf{Vec}}}
\newcommand{\Lact}{\operatorname{\mathbf{Lact}}}
\newcommand{\Rep}{\operatorname{\mathbf{Rep}}}
\newcommand{\LCorr}{\operatorname{\mathbf{LCorr}}}
\newcommand{\LMot}{\operatorname{\mathbf{LMot}}}
\newcommand{\Mot}{\operatorname{\mathbf{Mot}}}
\newcommand{\MM}{\operatorname{\mathbf{NMot}}}

\newcommand{\Aut}{\operatorname{\mathbf{Aut}}}
\newcommand{\Cat}{\operatorname{\mathbf{Cat}}}
\newcommand{\Sm}{\operatorname{\mathbf{Sm}}}

\newcommand{\Res}{\operatorname{Res}}
\newcommand{\Ind}{\operatorname{Ind}}
\newcommand{\Ker}{\operatorname{Ker}}
\newcommand{\Coker}{\operatorname{Coker}}
\newcommand{\IM}{\operatorname{Im}}
\newcommand{\Nori}{{\operatorname{Nori}}}
\newcommand{\op}{{\operatorname{op}}}
\newcommand{\proj}{{\operatorname{proj}}}

\newcommand{\gal}{{\operatorname{gal}}}

\renewcommand{\epsilon}{\varepsilon}
\renewcommand{\phi}{\varphi}

\newcommand{\et}{{\operatorname{\acute{e}t}}}
\newcommand{\tr}{\operatorname{tr}}
\newcommand{\rig}{{\operatorname{rig}}}

\newcommand{\inj}{\hookrightarrow}

\newcommand{\iso}{\overset{\sim}{\longrightarrow}}
\newcommand{\by}{\xrightarrow}

\newcommand{\qed}{\hfill \small $\blacksquare$}

\newtheorem{Thm}{Theorem}
\newtheorem{thm}{Theorem}[section]
\newtheorem{prop}[thm]{Proposition}
\newtheorem{cor}[thm]{Corollary}
\newtheorem{lemma}[thm]{Lemma}
\theoremstyle{definition}
\newtheorem{defn}[thm]{Definition}
\theoremstyle{remark}
\newtheorem{rk}[thm]{Remark}
\newtheorem{rks}[thm]{Remarks}

\newtheorem{ex}[thm]{Example}

\newcounter{spec}
\newenvironment{thlist}{\begin{list}{\rm{(\roman{spec})}}%
{\usecounter{spec}\labelwidth=20pt\itemindent=0pt\labelsep=10pt}}%
{\end{list}}%
\numberwithin{equation}{section}
\setcounter{tocdepth}{1}

\begin{document}
\title{Galois descent for motivic theories}
\author{Bruno Kahn}
\address{CNRS, Sorbonne Université and Université Paris Cité, IMJ-PRG\\ Case 247\\4 place
Jussieu\\75252 Paris Cedex 05\\France}
\eaddress{bruno.kahn@imj-prg.fr}
\date{May 30, 2024}
\keywords{Stacks, motives, Tannakian categories}
\amsclass{18F20, 18M25, 14C15}
\maketitle
\begin{abstract} We give necessary conditions for a category fibred in pseudo-abelian additive categories over the classifying topos of a profinite group to be a stack; these conditions are sufficient when the coefficients are $\Q$-linear. This applies to pure motives over a field in the sense of Grothendieck, Deligne-Milne and André, to mixed motives in the sense of Nori and to several motivic categories considered in \cite{adjoints}. We also give a simple proof of the exactness of a sequence of motivic Galois groups under a Galois extension of the base field, which applies to all the above (Tannakian) situations. Finally, we clarify the construction of the categories of Chow-Lefschetz motives given in \cite{chow-lefschetz} and simplify the computation of their motivic Galois group in the numerical case.  
\end{abstract}

\begin{flushright}{\it 
Du blanc! Verse tout, verse de par le diable! Verse deça tout plein, la langue me pelle.

-- Lans. tringue.

-- A toy, compaing! De hayt! de hayt! 

-- La! là! là! C'est morfiaillé, cela.

-- O lachryma Christi!

-- C'est de la Deviniere, c'est vin
pineau! 

-- O le gentil vin blanc! 

-- Et par mon ame, ce n'est
que vin de tafetas. 

-- Hen, hen, il est à une aureille, bien drappé
et de bonne laine.}
\medskip

Rabelais, Gargantua, ch. V.
\end{flushright}

\section{Introduction} The first main result of this article is

\begin{Thm}\label{t0} Let $k$ be a field, $\sim$ an adequate equivalence relation on algebraic cycles with rational coefficients and $\Mot_\sim(k)$ the category of pure motives over $k$ modulo $\sim$, in the sense of Grothendieck. Then the assignment
\[l\mapsto \Mot_\sim(l)\]
defines a stack of rigid $\otimes$-categories over the small étale site of $\Spec k$.
\end{Thm}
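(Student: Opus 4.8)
The plan is to deduce Theorem~\ref{t0} from the general stack criterion for $\Q$-linear fibred categories announced in the abstract, whose hypotheses I would verify one by one. First I would identify the topos of the small étale site of $\Spec k$ with the classifying topos $BG$ of $G=\Gal(k_s/k)$: an étale $k$-scheme is a finite coproduct $\coprod_i\Spec l_i$ with each $l_i/k$ finite separable, coverings are the surjective families, and $\Spec l_i\leftrightarrow\Hom_k(l_i,k_s)$ exhibits these as finite continuous $G$-sets. Under this identification the assignment $l\mapsto\Mot_\sim(l)$, with pullback functors given by base change of motives along $l\to l'$, is a category fibred over $BG$; since cycles with rational coefficients form $\Q$-vector spaces and each $\Mot_\sim(l)$ is by construction pseudo-abelian, the fibres are pseudo-abelian $\Q$-linear additive rigid $\otimes$-categories and base change is a rigid $\otimes$-functor. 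It therefore suffices to check the conditions of the general criterion, and to see that descent is compatible with the tensor structure.

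Two of these conditions are essentially formal. For the coproduct covers $\Spec(l_1\times l_2)=\Spec l_1\sqcup\Spec l_2$ one has $\Mot_\sim(l_1\times l_2)\simeq\Mot_\sim(l_1)\times\Mot_\sim(l_2)$, because a smooth projective scheme over $l_1\times l_2$ is a disjoint pair and correspondences decompose accordingly. For continuity one uses that every smooth projective variety and every cycle over $k_s$ is already defined over a finite subextension, so that $\Mot_\sim(k_s)=\colim_l\Mot_\sim(l)$ and likewise over each $l$; this, together with the standard passage to a Galois closure, reduces all descent questions to finite Galois extensions $l/k$ with group $\Gamma=\Gal(l/k)$.

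The key input is descent for morphisms: for $M,N\in\Mot_\sim(k)$ I must show $\Hom(M,N)\iso\Hom(M_l,N_l)^\Gamma$. Since a $\Hom$ group is a direct factor of the cycle group of a product, this reduces to showing that for a smooth projective $Z/k$ the pullback along $\pi\colon Z_l\to Z$ induces an isomorphism $CH^*_\sim(Z)_\Q\iso\bigl(CH^*_\sim(Z_l)_\Q\bigr)^\Gamma$; note that $\pi^*$ and $\pi_*$ are defined on $CH^*_\sim$ because $\pi$ is finite étale and $\sim$ is adequate. Injectivity follows from $\tfrac1d\pi_*\pi^*=\Id$ with $d=|\Gamma|$; the image clearly lands in the invariants; and surjectivity onto invariants follows from $\pi^*\pi_*=\sum_{\gamma\in\Gamma}\gamma^*$, a consequence of $Z_l\times_Z Z_l\cong\coprod_{\gamma\in\Gamma}Z_l$, so that any invariant $\alpha$ equals $\pi^*(\tfrac1d\pi_*\alpha)$. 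Here the rationality of the coefficients, which permits division by $|\Gamma|$, is indispensable, and is exactly why the criterion is only sufficient in the $\Q$-linear case.

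Granting these, the general $\Q$-linear criterion supplies the effectivity of descent, which is the substantive point and the step I expect to be the main obstacle in the abstract framework. Concretely, given a descent datum $(M,(\phi_\gamma)_{\gamma\in\Gamma})$ on a motive over $l$, one forms the restriction of scalars $\pi_*M\in\Mot_\sim(k)$; the canonical splitting $\pi^*\pi_*M\cong\bigoplus_{\gamma}\gamma^*M$ together with the $\phi_\gamma$ endows $\pi_*M$ with a $\Gamma$-action, and the averaging projector $e=\tfrac1{|\Gamma|}\sum_{\gamma}\gamma\in\End(\pi_*M)$ is idempotent. The descended motive is its image $\IM(e)$, which exists by pseudo-abelianness and satisfies $\pi^*\IM(e)\cong M$ compatibly with the $\phi_\gamma$; once more $\tfrac1{|\Gamma|}\in\Q$ is what makes the construction possible. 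Finally, all the functors involved are monoidal and $\Mot_\sim(k)$ is rigid, so $\IM(e)$ is automatically rigid and the gluing is compatible with $\otimes$ and duals; this completes the verification that $l\mapsto\Mot_\sim(l)$ is a stack of rigid $\otimes$-categories.
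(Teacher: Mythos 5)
Your proposal is correct and follows essentially the same route as the paper: reduce to finite Galois coverings, prove full faithfulness by the transfer argument ($\pi_*\pi^*=|\Gamma|$ and $\pi^*\pi_*=\sum_\gamma\gamma^*$, using $\Q$-coefficients and $Z_l\times_Z Z_l\cong\coprod_\gamma Z_l$), and obtain effectivity of a descent datum as the image of the averaging idempotent on $\pi_*M$, which exists by pseudo-abelianness. The only presentational difference is that you verify the two descent conditions directly on cycle groups, whereas the paper packages the same inputs (right adjoint, Cartesianity, weak projection formula, trace) into an abstract criterion (its Corollary~\ref{c2.1}) and then checks those hypotheses on generators.
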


Theorem \ref{t0} is so easy to prove that it ought to be part of the folklore. Here is a sketch: for $l/k$ a finite Galois extension with group $\Gamma$, write $\Mot_\sim(l)[\Gamma]$ for the category of descent data on $\Mot_\sim(l)$ relative to $\Gamma$. We have to prove that the canonical functor $\Mot_\sim(k)\to \Mot_\sim(l)[\Gamma]$ is an equivalence of categories. Full faithfulness follows from a standard transfer argument, using the facts that the base change functor $f^*:\Mot_\sim(k)\to \Mot_\sim(l)$ has a right adjoint $f_*$ and that the coefficients are $\Q$. For the essential surjectivity,  if $(C,(b_g)_{g\in \Gamma})\in  \Mot_\sim(l)[\Gamma]$ is a descent datum, with $C\in \Mot_\sim(l)$, the natural action of $\Gamma$ on $f_*C$ gives a projector whose image yields the effectivity of the descent datum.

In fact, such a result and sketch of proof hold in much greater generality, which led me to give them an abstract formulation: this is the purpose of Section \ref{s1}. In  Theorem \ref{p5.3}, we get necessary  conditions for a fibered category in additive categories over the classifying topos of a profinite group to be a stack; they are sufficient when the categories are pseudo-abelian and $\Q$-linear. These conditions use a baby ``2 functor formalism'' (for Galois étale coverings!), see Definition \ref{d5.2}. In Corollary \ref{c2.1}, we show how to weaken some hypotheses of this formalism in the presence of a monoidal structure: this allows us to easily prove the stack property for all motivic theories appearing in \cite[Th. 4.3 a)]{adjoints}, not just for pure motives (Theorem \ref{t3.1}). It also applies to the related theories of \cite{dm} and \cite{pour} and to Nori motives \cite{h-ms}. 

What started me on this work was the desire to clarify and simplify a construction and a reasoning in \cite{chow-lefschetz}, and descent arguments in its sequel \cite{ff}. In  \cite[§4]{chow-lefschetz}, I construct a category of Chow-Lefschetz motives over a (possibly non separably closed) field in two steps: first a ``crude'' category and then a better-behaved one. By hindsight, it became likely that the second step was just the process of creating the associated stack, and this is what is checked in Proposition \ref{p4.1}. The reasoning I wanted to simplify was the rather ugly recourse to continuous descent data in the proof of \cite[Th. 5]{chow-lefschetz}: this is done here in Theorem \ref{t5.1}, which also clarifies the proof of \cite[Prop. 6.23 (a)]{dm}\footnote{What is not clear in this proof is why the map of motives $\Hom(\bar M,\bar N)\inj  \underline{\Hom}(M,N)$ on p. 215 exists. For simplicity, take $M=\un$, so that the purported inclusion reads $\Hom(\un,\bar N)\inj N$.  If we think in terms of representations of the motivic Galois groups, the left hand side is the invariants of the right hand side under the action of the geometric Galois group $G^0(\sigma)$. It is a subrepresentation of $N$ provided $G^0(\sigma)$ is \emph{normal} in the arithmetic Galois group $G(\sigma)$.} (quoted without comment in \cite[4.6, exemples]{pour}). A semisimplicity assumption which appeared in the first version of this paper has now been dropped from this theorem, which makes it also applicable to \cite[Th. 4.7]{jannsen} and \cite[Th. 9.1.16]{h-ms}. Concerning \cite{ff}, the reasonings of \S\S 4 and 5 in its first version are greatly clarified by \S \ref{s4} of the present paper.

Note that Theorem \ref{t0} does not extend to motives over a base $S$ in the sense, say, of Deninger-Murre \cite{den-mur}; indeed, for $X,Y$ smooth projective over $S$, the presheaf $U\mapsto CH^*(X_U\times_U Y_U)_\Q$ for $U\to S$ étale is already not a sheaf in the Zariski topology! Similarly, Theorem \ref{t0} is obviously false if the coefficients are not $\Q$-linear (think of the Néron-Severi group of an anisotropic conic). If one wanted to extend it to these two cases, one would probably have to consider (stable?) $\infty$-categories.  Hopefully, the results of this paper will give an insight on what to do in that situation. Indeed, one may wonder if a suitable subset of a six functors formalism can be used to imply descent in the present spirit.

\subsection*{Structure of the paper} It is divided in two parts, plus an appendix. Part I contains foundational material: it deals with Galois descent theory from the most general (additive categories) to the most particular (Tannakian categories). Part II concerns applications to various motivic theories: some of those considered in \cite{adjoints}, Lefschetz motives à la Milne from \cite{chow-lefschetz}, $1$-motives and Nori motives \cite{h-ms}.

Section \ref{s1} concerns additive categories without extra structure; actually, the additivity hypothesis only appears from Subsection \ref{s1.4} onwards. The main result, Theorem \ref{p5.3}, says that descent is basically equivalent to a two functors formalism: right adjoints, a base change isomorphism and a trace structure (see Proposition \ref{p5.3a} for a more precise statement). Two other important ingredients are the construction of a retraction (Lemma \ref{l1.1} b)) and a monadic approach (\S\S \ref{s1.3} and \ref{s1.2}): both play a key rôle later. Section \ref{s.mon} adds $\otimes$-structures to the situation; this allows us to simplify the two functors axioms, yielding some conditions which are easy to verify in practice (Corollary \ref{c2.1}).

In Section \ref{s3}, we show that a diagram of $\otimes$-categories
\begin{equation}\label{eq1.3}\tag{*}
\begin{CD}
\sA'\\
@A f^*AA\\
\sA@>\gamma>> \sB,
\end{CD}
\end{equation}
where $f^*$ is a descent functor in the sense outlined above, has a categorical push-out in a fashion: see Propositions \ref{p4.2} and \ref{p2.3}. This uses the monadic approach alluded to above.

Section \ref{s4} does two things. Suppose given a diagram \eqref{eq1.3} of Tannakian categories over a field $K$, with $\gamma$ faithful and exact, whence a push-out $\sB'$ as above. Consider the fibre functor $\omega_\sB:\sB\to \Vec_L$, where $L$ is an extension of $K$. Replacing $\gamma$ by $\omega=\omega_\sB\circ \gamma$ in  \eqref{eq1.3}, we get a new diagram  whose push-out turns out to be of the form $\Vec_R$ for an étale $L$-algebra $R$, and the universal property provides a ``fibre functor'' $\omega'_\sB:\sB'\to \Vec_R$ and an $L$-homomorphism $R\to L$ if $\omega$ extends to $\sA'$. The first main result, Theorem \ref{t5.1}, is that when $\sB=\sA$ and $L=K$ the corresponding sequence of Tannakian groups is exact. The second main result, Proposition \ref{p4.3}, gives a sufficient condition for the composition $\sB'\by{\omega'_\sB} \Vec_R\by{-\otimes_R L}\Vec_L$ to be faithful; this condition is also necessary when $L=K$. (This will be used in a revised version of \cite{ff}.) 

We reap the fruits of our labour in Part II, showing in \S \ref{s5} that many motivic theories from \cite{adjoints} are stacks, and extending this to $1$-motives in \S \ref{s7} and to Nori motives in \S \ref{s8}. As indicated above, \S \ref{s6} shows that the construction of \cite[\S 4]{chow-lefschetz} is a ``stackification''. Finally, the appendix contains more foundational material, of a more abstract nature than the one in Part I.

\subsection*{Acknowledgements} I wish to thank Joseph Ayoub, Kevin Coulembier and Annette Huber for helpful exchanges.

\subsection*{Terminology} A $\otimes$-category is an additive symmetric monoidal category (the tensor structure being biadditive). A $\otimes$-functor $F$ between $\otimes$-categories $\sA,\sB$ is an additive symmetric monoidal functor; it is \emph{strong} if the structural morphisms $F(A)\otimes F(B)\to F(A\otimes B)$ are isomorphisms, \emph{lax} in general. If no adjective is used, it means by default that $F$ is strong; we use lax to emphasise the contrary. (See \cite[Ch. XI]{mcl}.)

\part{General theory}

\section{Stacks over a profinite group}\label{s1}

\subsection{The set up}\label{s1.1} Let $\sE$ be a category; recall from \cite[VI, \S\S 7, 8]{sga1} that there is a dictionary between fibered categories over $\sE$ and pseudo-functors $\sE\to \Cat$ whose comparison $2$-cocycle consists of natural isomorphisms; we shall adopt here the latter viewpoint, which is also the one of \cite{adjoints}.

Take $\sE=B\Pi$, where $\Pi$ is a profinite group and $B\Pi$ is its classifying topos, i.e. the category of finite continuous $\Pi$-sets. We want to give conditions for a (contravariant) pseudo-functor $\usA$ from $B\Pi$ to the $2$-category of categories to be a stack for the natural topology on $B\Pi$; this will be done in Theorem \ref{p5.3} when $\sA$ takes values in pseudo-abelian $\Q$-linear categories. Recall what being a stack means (\cite[Déf. II.1.2.1]{giraud},  \cite[Def. 026F]{stacks}); given $S\in B\Pi$:

\begin{enumerate}
\item For any $A,B\in \usA(S)$, the presheaf $(U\by{f} S)\mapsto \usA(U)(f^*A,f^*B)$ on $B\Pi/S$ is a sheaf;
\item Any descent datum relative to a cover $f:T\to S$ is effective.
\end{enumerate}

Here is the special case where $f$ is a Galois covering of connected (= transitive) $\Pi$-sets, with $\Gamma=\Gal(f)$; setting $\sA=\usA(S)$ and $\sA'=\usA(T)$:

\begin{enumerate}[label={\rm (\arabic*G)}] 
\item the map $\sA(A,B)\by{f^*} \sA'(f^*A,f^*B)$ induces an isomorphism $\alpha:\sA(A,B)\iso \sA'(f^*A,f^*B)^\Gamma$ for any $A,B\in \sA$;
\item any descent datum relative to $f$ is effective.
\end{enumerate}

In (1G), let us explain the action of $\Gamma$ on the right hand side: for each $g\in \Gamma$, the equality $fg=f$ and the pseudo-functor structure of $\usA$ yield a natural isomorphism $i_g:g^*f^*\overset{\sim}{\Rightarrow} f^*$; these are compatible with the natural isomorphisms $c_{g,h}:h^*g^*\overset{\sim}{\Rightarrow} (gh)^*$. To $\phi:f^*A\to f^*B$, one associates $\phi^g=i_g(B)(g^*\phi) i_g(A)^{-1}$ (right action!). If $\phi$ is of the form $f^*\psi$, then $\phi^g=\phi$ by the naturality of $i_g$.

The meaning of (2G) is the following: let $C\in\sA'$, provided with isomorphisms $b_g:g^*C\iso C$ verifying the usual $1$-coboundary condition
\[b_h\circ h^*b_g=b_{gh} \circ c_{g,h}\]
with respect to the $2$-cocycle $c_{g,h}$ (descent datum)\footnote{In \cite[\S 5]{chow-lefschetz}, a different convention is used.}. Then there exists $B\in \sA$ and an isomorphism $f^*B\iso C$ which induces an isomorphism of descent data, for the canonical descent datum on $f^*B$ implicitly used in the previous paragraph. Moreover, $B$ is unique up to unique isomorphism.

To formalise this, we introduce the category $\sA'[\Gamma]$ of descent data: an object is a descent datum as above, and morphisms are the obvious ones.\footnote{Note that $\sA'[\Gamma]$ is none else than the Grothendieck construction \cite[VI, \S 8]{sga1} on the pseudo-functor $\underline{\Gamma}\to \sA'$ giving the $g^*$, where $\underline{\Gamma}$ is the category with one object representing $\Gamma$.} If $(C,(b_g)),(D,(b'_g))\in \sA'[\Gamma]$ and $\phi\in \sA'(C, D)$, one defines $\phi^g$ for $g\in \Gamma$ as in the case of effective descent data, generalising the previous construction.

In fact (1G) and (2G) are sufficient to encompass (1) and (2), as shown by the following lemma. Let $B^\gal\Pi$ be the full subcategory of $B\Pi$ consisting of those (left) $\Pi$-sets $\Gamma$ where $\Gamma$ is a finite quotient of $\Pi$. We provide it with the topology induced by that of $B\Pi$ (any morphism is a cover). Note that every morphism in $B^\gal\Pi$ is a Galois covering.  For any site $\mathbf{S}$, let $\St(\mathbf{S})$ be the $2$-category of stacks over $\mathbf{S}$. Then

\begin{lemma}\label{l1.01}   The restriction $2$-functor $\St(B\Pi)\to \St(B^\gal\Pi)$ is an equivalence of $2$-categories.
\end{lemma}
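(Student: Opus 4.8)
The plan is to view this as an instance of the classical Comparison Lemma for sites, in its $2$-categorical (stack) form, applied to the full inclusion $i\colon B^\gal\Pi\hookrightarrow B\Pi$. That lemma has three inputs: that $B^\gal\Pi$ be a full subcategory of $B\Pi$ (true by construction), that it carry the induced topology (imposed in the statement), and that it be \emph{dense} (topologically generating) in $B\Pi$. Only the last needs an argument, and it is where the special structure of $B\Pi$ enters.

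For density I would use that every $S\in B\Pi$ is a finite disjoint union $\coprod_i \Pi/H_i$ of transitive objects, with each $H_i\le\Pi$ open. Choosing for each $i$ an open normal subgroup $N_i\subseteq H_i$ produces a surjection $\Pi/N_i\to \Pi/H_i\hookrightarrow S$ out of the Galois object $\Pi/N_i\in B^\gal\Pi$, and the resulting family is jointly surjective, hence a cover for the natural topology. Thus the sieve on $S$ generated by morphisms from objects of $B^\gal\Pi$ is covering, which is exactly density. Read on a single morphism of transitive objects, the same computation re-proves that every morphism of $B^\gal\Pi$ is a cover, consistent with the stated topology.

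Granting the hypotheses, the set-level Comparison Lemma already gives an equivalence of topoi $\operatorname{Sh}(B\Pi)\iso\operatorname{Sh}(B^\gal\Pi)$ by restriction. To upgrade it to a $2$-equivalence of stacks I would treat the two clauses of the definition of stack recalled above in turn. Clause~(1), that $\Hom$-presheaves are sheaves, transports across $i^*$ precisely because a presheaf on $B\Pi$ is a sheaf iff its restriction to the dense subcategory $B^\gal\Pi$ is one; the same fact yields $2$-full faithfulness (an equivalence on each $\Hom$-category), since a Cartesian functor, resp.\ a $2$-morphism, between two stacks is a global section of a $\Hom$-stack, resp.\ a $\Hom$-sheaf, and such sections are detected on $B^\gal\Pi$. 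Clause~(2), effectivity of descent, is what makes restriction essentially surjective: given a stack $\mathcal{X}_0$ over $B^\gal\Pi$ one extends it to the stack on $B\Pi$ associated to a suitable extension of $\mathcal{X}_0$ along $i$, and density forces the canonical comparison $\mathcal{X}_0\to(\text{this stack})|_{B^\gal\Pi}$ to be an equivalence.

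The main obstacle is this last, reconstruction step, and specifically the reason a naïve one-step descent is not enough: to build $\mathcal{X}(S)$ from a single Galois cover $T\to S$ one needs the values on $T\times_S T$ and $T\times_S T\times_S T$, which in general leave $B^\gal\Pi$, a fibre product of Galois objects being neither transitive nor Galois. The virtue of the Comparison Lemma is that it compares the \emph{entire} sites rather than one cover at a time, handling all these simplicial degrees at once; I would therefore invoke its $2$-categorical form (available through \cite{giraud}, and in the generality of \cite{stacks}) rather than redo the gluing by hand. Everything else --- that the extension is genuinely a stack restricting to $\mathcal{X}_0$, and that the two restriction functors on $\Hom$-categories are quasi-inverse up to coherent $2$-isomorphism --- then follows formally from density.
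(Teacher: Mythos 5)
Your proposal is correct and follows essentially the same route as the paper: both reduce the statement to the general principle that stacks over a site depend only on the associated topos (the $2$-categorical comparison lemma, via \cite{giraud}), with the only substantive verification being that $B^\gal\Pi$ is a dense full subcategory of $B\Pi$ with the induced topology, which you check correctly. The paper additionally spells out an explicit $2$-quasi-inverse via categories of descent data $\usA(\Gamma)(P/N)$, but this is offered only for the reader's convenience and is not a different argument.
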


\begin{proof}[sketch] This is a special case of the general fact that stacks over a site only depend on the topos associated to the site \cite[Th. II.3.5.1]{giraud}. For the reader's convenience, let us  describe a $2$-quasi-inverse:

Start from a stack $\usA$ on $B^\gal\Pi$. For $S\in B\Pi$ connected, let $P$ be the stabiliser of a point of $S$: this is an open subgroup of $\Pi$. Let $N\subset \Pi$ be an open normal subgroup contained in $P$: then $\Gamma=\Pi/N$ is finite, and $P/N$ (pseudo-)acts on $\usA(\Gamma)$ by restriction of the obvious action of $\Gamma$. Define $\usA(S)$ to be the category of descent data $\usA(\Gamma)(P/N)$. The stack property shows that it does not depend on the choice of $N$, up to canonical equivalence; choosing another base point in $S$ yields a conjugate of $P$ and also an equivalent category; this equivalence is unique up to a canonical isomorphism because the action of $P/N$ on $\usA(S)$ is canonically isomorphic to the identity. In general, we set $\usA(S)=\prod_i \usA(S_i)$ where the $S_i$'s are the connected components of $S$. We leave it to the reader to extend this construction to morphisms in order to define a pseudo-functor, and to check the stack property.
\end{proof}

Lemma \ref{l1.01} reduces the study of stacks over $B\Pi$ to that of stacks over $B^\gal\Pi$. Moreover, in much of the paper we shall only consider the functor $f^*$ for a fixed Galois $f:T\to S$, so it is convenient to abstract things a little more: thus our setting will be
\begin{itemize}
\item two categories $\sA$ and $\sA'$;
\item a pseudo-action of a finite group $\Gamma$ on $\sA'$; we say that $\sA'$ is a \emph{$\Gamma$-category};
\item a functor $f^*:\sA\to \sA'$ which pseudo-commutes with the action of $\Gamma$ (for its trivial action on $\sA$).
\end{itemize}

As above we have the category of descent data $\sA'[\Gamma]$, and there is a functor $\hat f^*:\sA\to \sA'[\Gamma]$ sending $A$ to $(f^*A,(i_g(A)))$; Condition (1G) amounts to say that $\hat f^*$ is fully faithful and (2G) amounts to say that it is essentially surjective. 
When this happens, we say that $f^*$ \emph{has descent}.

\subsection{Introducing an adjoint}\label{s1.0} As a motivation, we start with

\begin{lemma}\label{l1.0} The forgetful functor $\tilde f^*:\sA'[\Gamma]\to \sA'$ sending $(C,(b_g))$ to $C$ is faithful and conservative. If $\sA'$ has finite products, $\tilde f^*$ has the right adjoint $\tilde f_*:D\mapsto \prod_{g\in \Gamma} g^*D$ provided with the descent datum $(b_h)$ given by  the isomorphisms $c_{g,h}(D):h^*g^*D\iso (gh)^*D$ of \S \ref{s1.1}; the unit of this adjunction is given by the inverses of the $b_g$'s, and its counit by the projection on the factor $g=1$.
\end{lemma}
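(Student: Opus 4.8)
The plan is to verify the four claims about $\tilde f^*$ and its adjoint $\tilde f_*$ in turn, starting with the properties of $\tilde f^*$ that require no hypothesis on $\sA'$. Faithfulness is immediate: a morphism in $\sA'[\Gamma]$ between $(C,(b_g))$ and $(D,(b'_g))$ is by definition a morphism $\phi\colon C\to D$ in $\sA'$ compatible with the descent data, so $\tilde f^*\phi=\phi$ and two morphisms of descent data with the same underlying $\sA'$-morphism coincide. Conservativity is equally formal: if $\phi\colon (C,(b_g))\to (D,(b'_g))$ has $\tilde f^*\phi=\phi$ an isomorphism in $\sA'$, I would check that $\phi^{-1}$ is again a morphism of descent data by applying $g^*$ to the compatibility square for $\phi$ and inverting, using that $g^*$ preserves isomorphisms; this exhibits an inverse in $\sA'[\Gamma]$.

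Next, assuming $\sA'$ has finite products, I would construct $\tilde f_*$ on objects as $\tilde f_* D=\prod_{g\in\Gamma} g^*D$ and equip it with the descent datum whose component $b_h\colon h^*\tilde f_*D\iso \tilde f_*D$ is assembled from the isomorphisms $c_{g,h}(D)\colon h^*g^*D\iso (gh)^*D$. The key point is to permute factors correctly: $h^*$ of the product is $\prod_g h^*g^*D$, and the $2$-cocycle isomorphisms $c_{g,h}$ identify the $g$-th factor with the $(gh)$-th factor of $\tilde f_*D$, so that $b_h$ is the induced isomorphism of products after reindexing $g\mapsto gh$. I would then verify the $1$-coboundary condition $b_h\circ h^*b_g=b_{gh}\circ c_{g,h}$ directly on each factor; this reduces to the pentagon-type coherence relation satisfied by the $c_{g,h}$ as a normalised $2$-cocycle, which is where the bulk of the bookkeeping lies.

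For the adjunction itself, I would describe the unit and counit and check the triangle identities. The counit $\tilde f^*\tilde f_* D=\prod_g g^*D\to D$ is the projection onto the factor $g=1$ (recall $1^*D=D$ via the normalisation of the pseudo-action). The unit sends $(C,(b_g))$ to the morphism $C\to\prod_g g^*D=\prod_g g^*C$ whose $g$-th component is $b_g^{-1}\colon C\iso g^*C$; I must confirm this is a morphism of descent data in $\sA'[\Gamma]$, which again uses the coboundary condition on the $b_g$. Finally the two triangle identities amount to checking that the appropriate composites of projections and the $b_g^{-1}$'s are identities, which follows from the normalisation $b_1=\mathrm{id}$ and the compatibility of the $c_{g,h}$ with the identity component.

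The main obstacle is the careful indexing in the descent datum on $\tilde f_*D$: getting the direction of the reindexing $g\mapsto gh$ right so that the right action convention of \S\ref{s1.1} is respected, and confirming that the normalised $2$-cocycle identities for $c_{g,h}$ yield exactly the coboundary condition for the constructed $(b_h)$. Once this coherence is pinned down, faithfulness, conservativity, the unit/counit formulas, and the triangle identities are all routine verifications on individual factors of the products.
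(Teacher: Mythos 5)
Your proposal is correct and amounts to the direct verification that the paper leaves to the reader (its proof is simply ``This is readily checked''): faithfulness and conservativity of the forgetful functor, the reindexing $g\mapsto gh$ in the descent datum on $\prod_g g^*D$ via the $c_{g,h}$, the unit with components $b_g^{-1}$, the counit as projection on the $g=1$ factor, and the triangle identities. You correctly isolate the only delicate points, namely the cocycle coherence of the $c_{g,h}$ and the normalisation $1^*=\Id$, $b_1=\mathrm{id}$ (together with the fact that each $g^*$, being an equivalence, preserves finite products), so nothing is missing.
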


\begin{proof}
This is readily checked.
\end{proof}

Thus, in the presence of finite products in $\sA'$, the existence of a right adjoint to $f^*$ is necessary for descent to hold. We shall only assume $\sA'$ to have finite products from \S \ref{s1.7} onwards; for now, we just suppose that $f^*$ has a right adjoint $f_*$ with unit $\eta$ and counit $\epsilon$, and draw some corresponding identities.  

For a descent datum $(C,(b_g))$ and $g\in \Gamma$, we get an endomorphism $[g]$ of $f_*C$ corresponding to $\epsilon_C^g$ by adjunction; in formula:
\[[g] = f_*\epsilon_C^g\circ \eta_{f_*C}.\]

\begin{lemma}\label{l5.3} a) We have
\[\epsilon_C^g=\epsilon_C\circ f^*[g].\]
b) Let $A\in \sA$, $(C,(b_g))$ be a descent datum, and let $g\in \Gamma$. If $\phi:f^*A\to C$ and $\psi:A\to f_*C$ correspond to each other by adjunction, then $\phi^g$ and $[g]\circ \psi$ correspond to each other by adjunction. In particular (taking $A=f_*C$, $\psi=1_A$), we have $[gh]=[g][h]$ (sic) and $[g]$ is an automorphism. \\
c) Suppose that $C$ is an effective descent datum $f^*A$. For any $g\in \Gamma$, we have $\epsilon_{f^*A}^g\circ f^*\eta_A=1_{f^*A}$ and $[g]\circ \eta_A=\eta_A$.
\end{lemma}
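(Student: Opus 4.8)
The whole lemma is a formal consequence of the adjunction $(f^*,f_*)$ together with two bookkeeping properties of the operation $\phi\mapsto\phi^g$ already recorded in \S\ref{s1.1}: it respects composition, $(\alpha\circ\beta)^g=\alpha^g\circ\beta^g$ whenever the descent data on source, middle and target match up, and it fixes every morphism of the form $f^*\psi$. Throughout I write $\Phi$ for the adjunction bijection $\sA'(f^*X,Y)\iso\sA(X,f_*Y)$, so that $\Phi(\theta)=f_*\theta\circ\eta_X$ and $\Phi^{-1}(\xi)=\epsilon_Y\circ f^*\xi$; the definition of $[g]$ reads simply $[g]=\Phi(\epsilon_C^g)$, and the two displayed formulas of $\Phi^{\pm1}$ are just the triangle identities. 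With this dictionary fixed, part a) is nothing but the round-trip identity $\Phi^{-1}\Phi=\Id$: applying $\Phi^{-1}$ to $[g]=\Phi(\epsilon_C^g)$ yields $\epsilon_C\circ f^*[g]=\epsilon_C^g$, which is the assertion.

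For b), I would start from $\phi=\Phi^{-1}(\psi)=\epsilon_C\circ f^*\psi$ and compute $\phi^g$ using the two properties above: since $(-)^g$ fixes $f^*\psi$ and respects composition, $\phi^g=\epsilon_C^g\circ f^*\psi$; by part a) this equals $\epsilon_C\circ f^*([g]\circ\psi)=\Phi^{-1}([g]\circ\psi)$, which is exactly $\Phi(\phi^g)=[g]\circ\psi$. Specialising to $A=f_*C,\ \psi=1$ recovers $\Phi(\epsilon_C^g)=[g]$ and, for $g=1$, gives $[1]=\Phi(\epsilon_C)=1$ by the triangle identity. Feeding the output of b) back into itself, together with the right-action relation $(\phi^g)^h=\phi^{gh}$ of \S\ref{s1.1}, produces the multiplicativity $[gh]=[g][h]$, whence $[g][g^{-1}]=[1]=1$ and each $[g]$ is an automorphism.

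The one step demanding genuine care is this last one: transporting the right action on $\sA'(f^*A,C)$ through $\Phi$ produces left composition by $[g]$ on $\sA(A,f_*C)$, and one must track the order of the group elements scrupulously to land on $[gh]=[g][h]$ rather than its reverse — this is precisely the subtlety flagged by the parenthetical ``(sic)'', and it is the only place where the argument is more than routine adjunction calculus.

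Finally, for c) I use that an effective datum $C=f^*A$ carries the canonical descent datum $b_g=i_g(A)$. The triangle identity gives $\epsilon_{f^*A}\circ f^*\eta_A=1_{f^*A}$; applying $(-)^g$, which fixes both $f^*\eta_A$ and the identity, and invoking compatibility with composition, turns this into $\epsilon_{f^*A}^g\circ f^*\eta_A=1_{f^*A}$, the first formula. For the second I compare the two sides of the desired equality under $\Phi^{-1}$: on one hand $\Phi^{-1}(\eta_A)=\epsilon_{f^*A}\circ f^*\eta_A=1_{f^*A}$, on the other $\Phi^{-1}([g]\circ\eta_A)=\epsilon_{f^*A}\circ f^*[g]\circ f^*\eta_A=\epsilon_{f^*A}^g\circ f^*\eta_A$ by part a), which is again $1_{f^*A}$ by the formula just proved; since $\Phi^{-1}$ is a bijection, $[g]\circ\eta_A=\eta_A$.
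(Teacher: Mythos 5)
Parts a), b) and c) of your proposal are correct and follow the paper's own route almost verbatim: a) is the other adjunction identity relating $[g]$ and $\epsilon_C^g$; b) is the computation $\phi^g=\epsilon_C^g\circ f^*\psi$ followed by transposition (the paper transposes using naturality of $\eta$, you using a) together with functoriality of $f^*$ --- an immaterial difference); and c) is the same two-line argument. The general principles you invoke, namely that $(-)^g$ is compatible with composition of morphisms of descent data and fixes morphisms of the form $f^*\psi$, are exactly the ones the paper uses.

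The one place where the proposal falls short is precisely the step you single out as delicate and then do not perform: you assert that scrupulous order-tracking ``lands on $[gh]=[g][h]$ rather than its reverse''. If you actually carry it out, it lands on the reverse. Indeed $\epsilon_C\leftrightarrow 1_{f_*C}$, so $\epsilon_C^h\leftrightarrow[h]$ by b); applying b) once more to $\phi=\epsilon_C^h$, $\psi=[h]$ gives $(\epsilon_C^h)^g\leftrightarrow[g]\circ[h]$, while the right-action identity gives $(\epsilon_C^h)^g=\epsilon_C^{hg}\leftrightarrow[hg]$; hence $[g]\circ[h]=[hg]$, i.e.\ $g\mapsto[g]$ is an \emph{anti}-homomorphism, $[gh]=[h]\circ[g]$. (One can confirm this in the model where $\sA'$ is $\Vec_K$ with trivial $\Gamma$-action: there $f_*C=\bigoplus_{g}C$ and $([h]x)_{g'}=b_h(x_{hg'})$, whence $[g][h]=[hg]$ since $b_gb_h=b_{hg}$.) This reversal is presumably what the ``(sic)'' in the statement is flagging, and the paper's own proof is equally silent on it (``Hence also the last claim''); everything used downstream --- $[1]=1$, the invertibility of each $[g]$ via $[g][g^{-1}]=[g^{-1}g]=[1]=1=[g^{-1}][g]$, and the fact that $g\mapsto[g]$ is a bijection onto a set closed under composition, which is all that Lemma \ref{l5.4} and Proposition \ref{p5.3a} need --- survives intact. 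But as written your proof claims to derive a formula that the derivation you describe does not produce; you should either record the anti-homomorphism $[gh]=[h][g]$ explicitly, or exhibit the computation you believe yields $[gh]=[g][h]$.
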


\begin{proof} a) This is just the other adjunction identity relating $[g]$ and $\epsilon_C^g$. 

b) We have
\[\phi=\epsilon_{C} \circ f^*\psi,\quad \psi=f_*\phi \circ \eta_A.\]

The first identity yields
\[\phi^g=\epsilon_{C}^g \circ (f^*\psi)^g=\epsilon_{C}^g \circ f^*\psi.\]

By the second identity, the morphism corresponding to $\phi^g$ is then
\[f_*\phi^g\circ \eta_A=f_*\epsilon_{C}^g \circ f_*f^*\psi\circ  \eta_A=f_*\epsilon_{C}^g \circ \eta_{f_*C} \circ \psi=[g]\circ \psi\]
where we used the naturality of $\eta$. Hence also the last claim.

c) Indeed, for the first identity,
\[\epsilon_{f^*A}^g\circ f^*\eta_A=\epsilon_{f^*A}^g\circ (f^*\eta_A)^g=(\epsilon_{f^*A}\circ f^*\eta_A)^g=1_{f^*A}^g=1_{f^*A}\]
while the second one follows from b) applied to $\phi=1_{f^*A}$.
\end{proof}

\subsection{Cartesianity}\label{s1.7} Assume now that $\sA'$ has finite products. Let $C\in \sA'$ and $g\in \Gamma$. We define a morphism $f^*f_*C \to g^*C$ as the composition 
\[f^*f_*C\by{i_g(f_*C)} g^*f^*f_*C\by{g^*\epsilon_C}g^*C.\]

Collecting over $g$, we get a morphism
\begin{equation}\label{eq5.8}
f^*f_*C\to \prod_{g\in \Gamma} g^*C.
\end{equation}

\begin{defn}\label{d5.3} The functor $f^*$ is \emph{Cartesian} if ($f_*$ exists and) \eqref{eq5.8} is a natural isomorphism. \end{defn}

(Suppose that we are in the fibred situation described at the beginning of \S \ref{s1.1}. In view of the isomorphism of $\Pi$-sets $\coprod_{g\in \Gamma} T\iso T\times_S T$ given by $(g,y)\mapsto (y,gy)$, Definition \ref{d5.3} amounts to saying that the ``base change morphism'' $f^*f_*\Rightarrow (f\times_S 1)_* (1\times_S f)^*$ in the diagram
\[\begin{CD}
\sA'@>(1\times_S f)^*>> \sA(T\times_S T)\\
@V f_*VV @V(f\times_S 1)_* VV\\
\sA(S)@>f^*>> \sA(T)
\end{CD}\]
is an isomorphism. One should not confuse Definition \ref{d5.3} with the notion of a Cartesian morphism in a fibred category.)

Assume $f^*$ Cartesian, and let $(C,(b_g))$ be a descent datum. Composing with the $b_g$ in \eqref{eq5.8}, we get an isomorphism
\begin{equation}\label{eq5.8a}
f^*f_*C\by{u_C} \prod_{g\in \Gamma} C
\end{equation}
whose $g$-component is given, by definition, by $\epsilon_C^g$.

\begin{lemma}\label{l5.4} Let $h\in \Gamma$. Then  the action of $f^*[h]$ on the left hand side of \eqref{eq5.8a} amounts to the action of $h$ by right translation on the indexing set $\Gamma$ of its right hand side.
\end{lemma}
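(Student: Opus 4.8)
The plan is to trace how $f^*[h]$ interacts with the isomorphism $u_C$ of \eqref{eq5.8a}, whose $g$-component is $\epsilon_C^g$ by construction. Since the $g$-component of $u_C$ is $\epsilon_C^g$ and $u_C$ is an isomorphism, it suffices to compute, for each $g\in\Gamma$, the $g$-component of $u_C\circ f^*[h]$ and to identify it with $\epsilon_C^{g'}$ for the appropriate index $g'$ produced by right translation by $h$. First I would invoke Lemma \ref{l5.3} a), which gives $\epsilon_C^g=\epsilon_C\circ f^*[g]$; more generally I expect the key computational input to be the multiplicativity relation $[gh]=[g][h]$ from Lemma \ref{l5.3} b), together with a), to rewrite composites of the form $\epsilon_C^g\circ f^*[h]$.

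Concretely, the $g$-component of $u_C\circ f^*[h]$ is $\epsilon_C^g\circ f^*[h]$. Using Lemma \ref{l5.3} a) this equals $\epsilon_C\circ f^*[g]\circ f^*[h]=\epsilon_C\circ f^*([g][h])=\epsilon_C\circ f^*[gh]$, where the middle step uses that $f^*$ is a functor and the last uses $[g][h]=[gh]$. Applying Lemma \ref{l5.3} a) once more in the reverse direction, $\epsilon_C\circ f^*[gh]=\epsilon_C^{gh}$. Thus the $g$-component of $u_C\circ f^*[h]$ is $\epsilon_C^{gh}$, which is exactly the $(gh)$-component of $u_C$ itself. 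In other words, precomposing with $f^*[h]$ sends the component indexed by $g$ to the component indexed by $gh$, i.e. it realises right translation by $h$ on the indexing set $\Gamma$ of $\prod_{g\in\Gamma} C$.

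To package this cleanly I would introduce the right-translation automorphism $\rho_h$ of $\prod_{g\in\Gamma} C$ that permutes the factors by $g\mapsto gh$, and state the conclusion as the commutativity of the square relating $u_C$, $f^*[h]$ and $\rho_h$, namely $u_C\circ f^*[h]=\rho_h\circ u_C$. The verification then reduces to the component-wise identity just established, since a morphism into a finite product is determined by its components. I should be careful about the direction of the translation — whether it is $g\mapsto gh$ or $g\mapsto hg$ — which is dictated by the right-action convention for $\phi^g$ fixed in \S\ref{s1.1} and by the order in the relation $[gh]=[g][h]$; this bookkeeping is the only genuinely delicate point, and I would double-check it against the ``(sic)'' flagged in Lemma \ref{l5.3} b).

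The main obstacle I anticipate is not the algebra, which is a short three-line manipulation, but rather making precise the naturality and the sense in which $u_C$ intertwines the two actions: one must confirm that $\rho_h$ is defined with the correct variance so that the factor formerly labelled $g$ genuinely lands in position $gh$, and that the $g$-component of $u_C$ really is $\epsilon_C^g$ on the nose (as asserted after \eqref{eq5.8a}) rather than up to composition with some $b_g$. Once these identifications are pinned down, the proof is an immediate consequence of Lemma \ref{l5.3} a) and b).
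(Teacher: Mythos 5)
Your proposal is correct and is essentially identical to the paper's proof: both reduce to the one-line component computation $\epsilon_C^g\circ f^*[h]=\epsilon_C\circ f^*[g]\circ f^*[h]=\epsilon_C\circ f^*[gh]=\epsilon_C^{gh}$ using Lemma \ref{l5.3} a) and the relation $[gh]=[g][h]$ from Lemma \ref{l5.3} b). The extra packaging via $\rho_h$ and the convention-checking remarks are fine but not needed beyond what the paper already records.
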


\begin{proof} Let $g\in \Gamma$. Using Lemma \ref{l5.3} a) and b), we find
\[\epsilon_C^g\circ f^*[h]=\epsilon_C\circ f^*[g]\circ f^*[h]=\epsilon_C\circ f^*[gh]=\epsilon_C^{gh}.\]
\end{proof}

\begin{rk}\label{r1.1} Assume that $\sA$ also has finite products and that $f^*$ commutes with products. In \eqref{eq5.8a}, take $C=f^*A$ for some $A\in \sA$. Then $\epsilon_{f^*A}=f^*\pi_A$ for 
\[\pi_A:\prod_{g\in \Gamma} A\to A\]
the projection on the factor $g=1$.
\end{rk}

Here are important consequences of cartesianity. First, a definition:

\begin{defn}\label{d1.1} a) (cf. {\cite[Def. 2.3.7]{bvk2}}). A functor $F:\sC\to \sD$ is \emph{dense} if any object of $\sD$ is isomorphic to a retract of $F(C)$ for some $C\in \sC$.\\
b) A category  is \emph{pointed} if it has an object which is initial and final.
\end{defn}

Suppose that $\sA'$ is pointed. Then any Hom set of $\sA'$ has a null element $0$; in particular, for $C\in \sC$ the projection $\prod_{g\in \Gamma} g^*C\to C$ on the $g=1$ factor has a section which is the identity on this factor and $0$ elsewhere. If $f^*$ is Cartesian, composing with the isomorphism \eqref{eq5.8} we get a section 
\begin{equation}\label{eq1.4}
\sigma_C:C\to f^*f_*C
\end{equation}
of $\epsilon_C$, which is natural in $C$.

\begin{lemma}\label{l1.1} If $\sA'$ is pointed and $f^*$ is Cartesian,\\
a) $f^*$ is dense;\\
b) for any category $\sB$ and two functors $a,b:\sA'\rightrightarrows \sB$, the natural map
\[f_!:\Hom(a,b)\to \Hom(af^*,bf^*)\]
between sets of natural transformations has a canonical retraction $\rho$; in particular, $f_!$ is injective. Moreover, $v\in  \Hom(af^*,bf^*)$ is in the image of $f_!$ if and only if it commutes with the $\epsilon_{f^*A}$ for all $A\in \sA$.
\end{lemma}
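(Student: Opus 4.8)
The plan is to exploit the natural section $\sigma_C:C\to f^*f_*C$ of $\epsilon_C$ that we already have from cartesianity and pointedness. Since $\sigma$ is a natural transformation $\Id\Rightarrow f^*f_*$ which is a section of the counit $\epsilon:f^*f_*\Rightarrow\Id$, it gives a way to ``average'' a natural transformation between the composites $af^*,bf^*$ back up to a natural transformation between $a,b$.

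First I would settle part a). Given $C\in\sA'$, the morphism $\sigma_C:C\to f^*f_*C=f^*(f_*C)$ is a section of $\epsilon_C:f^*f_*C\to C$, so $C$ is a retract of $f^*(f_*C)$, an object in the image of $f^*$; this is exactly the density statement, so a) is immediate.

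For b), the construction of the retraction $\rho$ is the heart of the matter. Given $v\in\Hom(af^*,bf^*)$, I want to produce a natural transformation $\rho(v):a\Rightarrow b$. The recipe is to define, for $C\in\sA'$,
\[
\rho(v)_C = \epsilon^b_C\circ b(\epsilon_{f_*C})\circ v_{f_*C}\circ a(\sigma_C),
\]
or more transparently: transport $C$ up to $f^*f_*C$ via $\sigma_C$, apply $v$ at the object $f_*C$ (where $v_{f_*C}:af^*f_*C\to bf^*f_*C$ makes sense), and transport back down via the counit applied in $\sB$. Concretely $\rho(v)_C$ is the composite $a(C)\by{a(\sigma_C)} af^*f_*C\by{v_{f_*C}} bf^*f_*C\by{b(\epsilon_C)} b(C)$. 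I would then check two things: that $\rho(v)$ is natural in $C$ (this follows from naturality of $\sigma$, of $\epsilon$, and of $v$, together with functoriality of $a,b$), and that $\rho\circ f_!=\Id$. For the retraction identity, if $v=f_!(w)=wf^*$ for some $w:a\Rightarrow b$, then $v_{f_*C}=w_{f^*f_*C}$, and naturality of $w$ applied to $\epsilon_C:f^*f_*C\to C$ lets me slide $w$ past $\epsilon_C$, while $\epsilon_C\circ\sigma_C=\Id_C$ collapses the remaining factors, yielding $\rho(f_!w)_C=w_C$. Injectivity of $f_!$ is then formal.

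For the final ``moreover'' clause, I would argue as follows. One direction is easy: if $v=wf^*$, then $v_{f^*A}=w_{f^*f^*A}$ and naturality of $w$ with respect to $\epsilon_{f^*A}:f^*f^*A\to f^*A$ shows precisely that $v$ commutes with the $\epsilon_{f^*A}$. For the converse, suppose $v$ commutes with all $\epsilon_{f^*A}$; set $w=\rho(v)$ and show $f_!(w)=v$, i.e. $w_{f^*A}=v_{f^*A}$ for all $A\in\sA$. Unwinding $w_{f^*A}=\rho(v)_{f^*A}$ and using the commutation hypothesis to move $v$ past the counits, I expect the naturality of $\sigma$ together with the identity $\epsilon_{f^*A}\circ\sigma_{f^*A}=\Id$ (and a comparison of $\sigma_{f^*A}$ with $f^*\eta_A$, where Lemma \ref{l5.3} c) and Remark \ref{r1.1} are likely to enter) to force $w_{f^*A}=v_{f^*A}$.

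The main obstacle I anticipate is precisely this last converse direction: making the ``commutes with $\epsilon_{f^*A}$'' hypothesis do its job requires identifying $a(\sigma_{f^*A})$ and $b(\epsilon_{f^*A})$ well enough to cancel, and this is where the explicit description of $\sigma$ as the section built from \eqref{eq5.8} — together with Remark \ref{r1.1} identifying $\epsilon_{f^*A}$ with $f^*\pi_A$ — is essential. The naturality bookkeeping in verifying that $\rho(v)$ really is a natural transformation is routine but is the one place where one must be careful to use naturality of $v$ at the morphisms $f_*(\text{-})$ rather than at arbitrary morphisms.
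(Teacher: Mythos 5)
Your proposal is correct and follows essentially the same route as the paper: part a) via the retraction pair $(\epsilon_C,\sigma_C)$, and part b) via the same formula $\rho(v)_C=b(\epsilon_C)\circ v_{f_*C}\circ a(\sigma_C)$ with the same naturality and retraction checks (your first displayed formula for $\rho(v)_C$ is garbled, but the ``concrete'' composite you give right after is the correct one). The only divergence is that the converse of the ``moreover'' clause, which you flag as the main obstacle requiring Remark \ref{r1.1} and Lemma \ref{l5.3} c), is in fact immediate: the commutation hypothesis literally says $b(\epsilon_{f^*A})\circ v_{f_*f^*A}=v_A\circ a(\epsilon_{f^*A})$, and then $a(\epsilon_{f^*A}\circ\sigma_{f^*A})=a(1)$ finishes it.
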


\begin{proof} a) Let $C\in \sA'$. Then $C$ is a retract of $f^*f_* C$ via the pair $(\epsilon_C,\sigma_C)$, where $\sigma_C$ is the section of \eqref{eq1.4}.

b) For $v\in \Hom(af^*,bf^*)$, define
\[\rho(v)_C =b(\epsilon_C)v_{f_*C} a(\sigma_C):a(C)\to b(C).\]

If $\psi:C\to D$ is a morphism, we have
\begin{multline*}
b(\psi)\rho(v)_C = b(\psi)b(\epsilon_C)v_{f_*C} a(\sigma_C)
=b(\epsilon_D)b(f^*f_*\psi)v_{f_*C} a(\sigma_C)\\
=b(\epsilon_D)v_{f_*D}a(f^*f_*\psi) a(\sigma_C)=b(\epsilon_D)v_{f_*D}a(\sigma_D)a(\psi) =\rho(v)_D a(\psi) 
\end{multline*}
so that $\rho(v)$ is a natural transformation. If $v=f_!(u)$ for some $u\in \Hom(a,b)$, then
\[\rho(v)_C =b(\epsilon_C)u_{f^*f_*C} a(\sigma_C)=u_Ca(\epsilon_C)a(\sigma_C)=u_C,\]
thus $\rho$ is indeed a retraction of $f_!$.

For the image of $f_!$, the condition is obviously necessary. Suppose that it holds. Then we have, for $A\in \sA$, 
\[f_!\rho(v)_A=\rho(v)_{f^*A} = b(\epsilon_{f^*A})v_{f_*{f^*A}} a(\sigma_{f^*A})= v_Aa(\epsilon_{f^*A}) a(\sigma_{f^*A})= v_A,\]
so $v=f_!\rho(v)$.
\end{proof}

\begin{rks} a) In Lemma \ref{l1.1} b), we could have used $\sigma$ instead of $\epsilon$ for the condition to be in the image of $f_!$.\\
b) $\rho$ does \emph{not} define a retraction of the functor $f_!:\Funct(\sA',\sB)\to \Funct(\sA,\sB)$ in general (it need not respect composition). However, it is compatible with composition with a further functor $\sB\to \sC$.
\end{rks}

\subsection{Traces}\label{s1.4} From now on, we assume $\sA$, $\sA'$ and $f^*$ (hence also $f_*$) additive. We write $\bigoplus$ instead of $\prod$. To formulate the result, we need a further definition:

\begin{defn}\label{d5.2} Suppose $f^*$ Cartesian. A \emph{trace structure} on $f^*$ is a natural transformation $\tr:f_*f^*\Rightarrow \Id_{\sA}$ such that, for any $A\in \sA$:
\begin{enumerate}
\item the composition
\begin{equation}\label{eq5.7}
A\by{\eta_A} f_*f^*A\by{\tr_A} A
\end{equation}
is multiplication by $|\Gamma|$;
\item the isomorphism \eqref{eq5.8a} for $C=f^*A$ converts $f^*\tr_A$ into the sum map.
\end{enumerate}
\end{defn} 

\subsection{Main result} 

\begin{prop}\label{p5.3a} 
a) If $f^*$ has descent, then
it is Cartesian and has a trace structure. \\
b) The converse is true if  $\sA$ is pseudo-abelian\footnote{Also sometimes called Karoubian or idempotent complete.} and $\Z[1/|\Gamma|]$-linear.
\end{prop}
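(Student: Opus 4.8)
The plan for a) is to factor $f^*$ through the category of descent data. Write $f^*=\tilde f^*\circ\hat f^*$, where $\tilde f^*:\sA'[\Gamma]\to\sA'$ is the forgetful functor of Lemma \ref{l1.0}, which admits the right adjoint $\tilde f_*:D\mapsto\bigoplus_g g^*D$. Descent means that $\hat f^*$ is an equivalence, so by uniqueness of right adjoints $f_*\cong(\hat f^*)^{-1}\circ\tilde f_*$, whence $f^*f_*\cong\tilde f^*\tilde f_*=\bigoplus_g g^*(-)$. I would then check that the canonical isomorphism thus obtained coincides with the comparison map \eqref{eq5.8}, which is a bookkeeping of counits; this gives cartesianity.

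For the trace structure, I would first produce a natural transformation $\mathrm{tr}':\tilde f_*\tilde f^*\Rightarrow\Id_{\sA'[\Gamma]}$ whose value at $(C,(b_g))$ is $\sum_g b_g\circ\mathrm{pr}_g:\bigoplus_g g^*C\to C$, checking via the coboundary condition on $(b_g)$ that this is a morphism of descent data, and then transport it along the equivalence $\hat f^*$ to define $\tr:f_*f^*\Rightarrow\Id_\sA$. Property (1) reduces, through $\hat f^*$, to $\mathrm{tr}'\circ\tilde\eta=\sum_g b_g\circ b_g^{-1}=|\Gamma|$, the unit $\tilde\eta$ being given by the $b_g^{-1}$ by Lemma \ref{l1.0}; property (2) is then immediate, since $\tilde f^*\mathrm{tr}'$ is literally the sum map once the factors $g^*C$ are identified through the $b_g$ used in \eqref{eq5.8a}.

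For the converse b), the invertibility of $|\Gamma|$ enters through the averaging operator $\tfrac1{|\Gamma|}\sum_g$. I would establish (1G) by the transfer argument: the adjunction $\sA'(f^*A,f^*B)\cong\sA(A,f_*f^*B)$ followed by $\tr_B\circ(-)$ gives a map back to $\sA(A,B)$, and trace property (1) together with naturality of $\eta$ shows that its composite with $f^*$ is multiplication by $|\Gamma|$, so $f^*$ is split injective on Hom-groups (and its image clearly lands in the $\Gamma$-invariants). For surjectivity onto the invariants, given $\phi$ with $\phi^g=\phi$ I would set $\psi=\tfrac1{|\Gamma|}\tr_B\circ\bar\phi$, with $\bar\phi$ the adjunct of $\phi$; the key computation, using Lemma \ref{l5.3} a) and b), is that the $g$-component of $u_{f^*B}\circ f^*\bar\phi$ equals $\epsilon_{f^*B}^g\circ f^*\bar\phi=\phi^g=\phi$, so that trace property (2) turns $f^*\tr_B$ into the sum map and yields $f^*\psi=\phi$.

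For (2G) I would invoke pseudo-abelianness. Given a descent datum $(C,(b_g))$, the automorphisms $[g]$ of $f_*C$ satisfy $[gh]=[g][h]$ (Lemma \ref{l5.3} b)), so $e=\tfrac1{|\Gamma|}\sum_g[g]$ is idempotent; let $B=\IM(e)$ with $i:B\hookrightarrow f_*C$. I would then show that $\theta=\epsilon_C\circ f^*i:f^*B\to C$ is the required isomorphism of descent data. Here cartesianity is essential: under \eqref{eq5.8a} the endomorphism $f^*e$ becomes the averaging-over-translation projector on $\bigoplus_\Gamma C$ (Lemma \ref{l5.4}), whose image is the diagonal copy of $C$, while $\epsilon_C=\mathrm{pr}_1\circ u_C$ restricts to an isomorphism of this diagonal onto $C$. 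I expect the main obstacle to be precisely this last verification — that $\theta$ is not merely an isomorphism in $\sA'$ but one of descent data, i.e.\ that it intertwines the canonical datum $i_g(B)$ on $f^*B$ with $(b_g)$; this compatibility is what forces one to track how $e$ was assembled from the $[g]$ and to use Lemma \ref{l5.4} rather than bare cartesianity.
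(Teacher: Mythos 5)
Your proposal is correct and follows essentially the same route as the paper: part a) via the identification of $f^*$ with the forgetful functor $\tilde f^*$ of Lemma \ref{l1.0} under the equivalence $\hat f^*$, and part b) via the transfer argument for (1G) and the idempotent $\tfrac1{|\Gamma|}\sum_g [g]$ acting on $f_*C$ for (2G), with the same appeals to Lemmas \ref{l5.3} and \ref{l5.4}. The only cosmetic differences are that the paper proves $ab=\sum_g g$ on all of $\sA'(f^*A,f^*B)$ before restricting to invariants, and verifies that $\tilde\iota$ is an isomorphism by writing down an explicit inverse $j$ rather than by your (equally valid) diagonal-image argument.
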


\begin{proof}
a) Recall Lemma \ref{l1.0}.  Cartesianity is tautologically true, and the trace morphism is given by $\bigoplus_{g\in \Gamma} g^*C\by{(b_g)} C$ for $(C,(b_g))\in \sA'[\Gamma]$. Condition (1) of Definition \ref{d5.2} is immediate and Condition (2) is also tautological.

b)  We check Conditions (1G) and (2G) of \S \ref{s1.1}:

(1G)  By adjunction, the map $\sA(A,B)\to \sA'(f^*A,f^*B)$ may be rewritten as the map
\[\sA(A,B)\by{a} \sA(A,f_*f^*B)
\]
induced by the unit morphism $\eta_B$. Using \eqref{eq5.7}, we get a map $b$ in the opposite direction such that $ba$ is multiplication by $|\Gamma|$; hence $a$ is injective by hypothesis (for this it would suffice that $\sA(A,B)$ has no $|\Gamma|$-torsion).

I now claim that $ab=\sum_{g\in \Gamma} g$ for the action of $\Gamma$ on $\sA'(f^*A,f^*B)$ explained in \S \ref{s1.1}. By Lemma \ref{l5.3} b), it suffices to prove that the composition
\[f_*f^*B\by{\tr_B} B\by{\eta_B} f_*f^*B
\]
is $\sum_{g\in \Gamma} [g]$. By the faithfulness of $f^*$ which has just been established, it suffices to do this after applying $f^*$. By Condition (2) of the trace structure, this translates as a composition
\[\bigoplus_{g\in \Gamma} f^* B\by{\Sigma}  f^*B \by{\Delta}  \bigoplus_{g\in \Gamma} f^* B\]
in which $\Sigma$ is the sum map and $\Delta$ is the diagonal map by Lemma \ref{l5.3} c); the claim now follows from Lemma \ref{l5.4}.

Coming back to the proof of (1G), we find that the composition
\[\sA'(f^*A,f^*B)^\Gamma\inj \sA'(f^*A,f^*B)\by{ab} \sA'(f^*A,f^*B)^\Gamma\]
is also multiplication by $|\Gamma|$, hence the desired bijectivity of $\alpha$ in (1G).

(2G) Let $(C,(b_g))$ be a descent datum. Consider the idempotent $e_\Gamma=\frac{1}{|\Gamma|}\sum_{g\in \Gamma} [g]$ in $\End f_* C$, and let $A=\IM e_\Gamma$. The adjoint of the inclusion $\iota:A\inj f_*C$ yields a morphism $\tilde \iota:f^*A\to C$. Let us check that this is a morphism of descent data, and an isomorphism. 

The first point amounts to say that $\tilde\iota^g=\tilde\iota$ for all $g$ which, by Lemma \ref{l5.3} b), amounts to $[g]\circ \iota=\iota$ for all $g$: this is true by definition of $\iota$.

For the second point, we define a morphism $j:C\to f^*A$ as follows. Let $\pi:f_*C\to A$ be the projection associated to the idempotent $e_\Gamma$. Then $j$ is the composition
\[C\by{\Delta} \bigoplus_{g\in \Gamma} C\iso f^*f_*C\by{f^*\pi} f^*A\]
where the first morphism is the diagonal map and the second one is the inverse of the isomorphism \eqref{eq5.8a}. It remains to show that $j$ is inverse to $\tilde \iota$.

By the first point, we have $f^*[g]\circ f^*\iota=f^*\iota$ which means, by Lemma \ref{l5.4}, that all the components of $f^*\iota$ on  \eqref{eq5.8a} are equal, i.e. that $\Delta \epsilon_C f^*\iota = f^*\iota$. Therefore, with an abuse of notation,
\[j\tilde \iota = f^*\pi \Delta \epsilon_C f^*\iota= f^*\pi f^*\iota = 1_{f^* A}.\]

Finally, we have $ f^*\iota f^*\pi=f^*e = \frac{1}{|\Gamma|}\sum_{g\in \Gamma} f^*[g]$, hence
\[\tilde \iota j = \epsilon_C f^*\iota f^*\pi \Delta= \frac{1}{|\Gamma|}\sum_{g\in \Gamma}\epsilon_C f^*[g]\Delta=1_C\]
as desired.
\end{proof}

\begin{thm}\label{p5.3} 
a) If $\usA$ is a stack over $B\Pi$, then
\begin{thlist}
\item $\usA$ commutes with coproducts;
\item for any Galois covering $f:T\to S$ in $B\Pi$, with $S,T$ connected, $f^*$ is Cartesian and has a trace structure. 
\end{thlist}
b) Suppose that  $\usA(S)$ is pseudo-abelian for all $S$. Then the converse is true if  $\sA$ is $\Z[1/|\Gal(f)|]$-linear for any $f$ as in (ii) with $S=*$ (the one-point $\Pi$-set), e.g. if $\sA$ is $\Q$-linear.
\end{thm}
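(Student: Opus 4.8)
The plan is to bootstrap everything from Proposition \ref{p5.3a}, which settles the case of a single Galois covering, using Lemma \ref{l1.01} to cut the global stack condition down to such coverings and condition (i) to dispose of disjoint unions. For part a), I would proceed as follows. For (i), apply the stack conditions (1)--(2) to the covering of $S=\coprod_i S_i$ by its connected components: since $S_i\times_S S_j=\emptyset$ for $i\neq j$ the descent data carry no nontrivial gluing, so (1) and (2) together identify $\usA(S)$ with $\prod_i\usA(S_i)$, which is exactly the assertion that $\usA$ commutes with coproducts. For (ii), fix a Galois covering $f:T\to S$ with $S,T$ connected and $\Gamma=\Gal(f)$, and put $\sA=\usA(S)$, $\sA'=\usA(T)$ with the pseudo-action of $\Gamma$ of \S\ref{s1.1}. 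The \v{C}ech identification $T\times_S T\cong\coprod_{g\in\Gamma}T$, combined with (i), turns the sheaf condition (1) into the invariants isomorphism (1G) and the effectivity condition (2) into (2G); thus $f^*$ has descent, and Proposition \ref{p5.3a} a) shows it is Cartesian and carries a trace structure.

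For part b), by Lemma \ref{l1.01} together with condition (i) (which ensures that $\usA$ is recovered from its restriction to $B^\gal\Pi$ through the extension procedure of that lemma) it suffices to prove that this restriction is a stack. Since every morphism of $B^\gal\Pi$ is a Galois covering of connected objects, this reduces to showing that $f^*$ has descent for each Galois covering $f:T\to S$ with $S,T\in B^\gal\Pi$. Fix such an $f$, with $\Gamma=\Gal(f)$. As $\usA(S)$ is pseudo-abelian by hypothesis, and as $f^*$ is Cartesian and carries a trace structure by (ii), Proposition \ref{p5.3a} b) applies as soon as we know that $\usA(S)$ is $\Z[1/|\Gamma|]$-linear. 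This last point is the only one that needs work, since $\Z$-linearity is assumed only over the base point $*$.

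To establish it, I would bring in the structural covering $p:S\to *$, Galois with group $\Gamma_S=\Gal(S/*)$, together with the composite $q=p\circ f:T\to *$, Galois with group $\Gamma_T=\Gal(T/*)$; the tower formula gives $|\Gamma_T|=|\Gamma_S|\,|\Gamma|$, so in particular $|\Gamma|$ divides $|\Gamma_T|$. By (ii) the functor $p^*:\usA(*)\to\usA(S)$ is Cartesian, and $\usA(S)$, being additive, is pointed; hence $p^*$ is dense by Lemma \ref{l1.1} a). Consequently any $X,Y\in\usA(S)$ are retracts of objects of the form $p^*A,p^*B$, so that $\usA(S)(X,Y)$ is a direct summand, as an abelian group, of
\[\usA(S)(p^*A,p^*B)\cong\usA(*)(A,p_*p^*B).\]
The right-hand side is a Hom group of $\usA(*)$, which by the hypothesis applied to $q:T\to *$ is $\Z[1/|\Gamma_T|]$-linear, hence a fortiori $\Z[1/|\Gamma|]$-linear because $|\Gamma|$ divides $|\Gamma_T|$. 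A direct summand of a $\Z[1/|\Gamma|]$-module is again one, so $\usA(S)(X,Y)$ is $\Z[1/|\Gamma|]$-linear; thus $\usA(S)$ is $\Z[1/|\Gamma|]$-linear, Proposition \ref{p5.3a} b) applies, and $f^*$ has descent. This completes the verification that the restriction of $\usA$ to $B^\gal\Pi$, and hence $\usA$ itself, is a stack.

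The step I expect to be the main obstacle is precisely the transfer of $\Z[1/|\Gamma|]$-linearity from the base $\usA(*)$ up to $\usA(S)$, where the density of $p^*$ afforded by Cartesianity (Lemma \ref{l1.1} a)) is exactly the tool that makes it go through, supplemented by the numerical remark $|\Gamma|\mid|\Gamma_T|$. A secondary point requiring care is the reduction of the stack property over $B\Pi$ to Galois coverings of connected objects in $B^\gal\Pi$: here one must be attentive both to the passage from arbitrary covers to Galois ones and to the rôle of condition (i) in ensuring that $\usA$ coincides with the extension of its restriction.
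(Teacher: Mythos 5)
Your proposal follows the same route as the paper's own proof, which is a two-line appeal to Proposition \ref{p5.3a} and Lemma \ref{l1.01}: part a) via the identification $T\times_S T\cong\coprod_{g\in\Gamma}T$ and Proposition \ref{p5.3a} a), part b) by checking descent for each Galois covering in $B^\gal\Pi$ via Proposition \ref{p5.3a} b). Where you genuinely add something is the transfer of $\Z[1/|\Gamma|]$-linearity from $\usA(*)$ up to $\usA(S)$: the theorem's hypothesis is stated only at the base point, whereas Proposition \ref{p5.3a} b) applied to $f:T\to S$ needs $\usA(S)$ to be $\Z[1/|\Gal(f)|]$-linear, and the paper's proof is silent on this point. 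Your argument is correct: for $S\in B^\gal\Pi$ the covering $p:S\to *$ is Galois with both terms connected, so (ii) makes $p^*$ Cartesian, hence dense by Lemma \ref{l1.1} a); every Hom group of $\usA(S)$ is then a direct summand of a group of the form $\usA(*)(A,p_*p^*B)$, which is uniquely $|\Gamma_T|$-divisible by the hypothesis applied to $T\to *$, hence uniquely $|\Gamma|$-divisible since $|\Gamma|$ divides $|\Gamma_T|$; and a direct summand inherits unique divisibility. This is exactly the right way to fill in the step the paper elides.

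One point deserves more care than your parenthetical gives it: the reduction to $B^\gal\Pi$. Condition (i) identifies $\usA(S)$ with $\prod_i\usA(S_i)$ for disconnected $S$, but for a connected $S=\Pi/P$ with $P$ open and \emph{not normal}, agreement of $\usA(S)$ with the value produced by the extension procedure of Lemma \ref{l1.01} amounts to descent along the Galois covering $\Pi/N\to S$ (for $N\subset P$ open normal in $\Pi$), whose source lies in $B^\gal\Pi$ but whose target does not. There $S\to *$ is no longer Galois, so (ii) does not give density of $p^*$ and your linearity transfer does not apply verbatim; some further argument is required to get $\Z[1/|P/N|]$-linearity of $\usA(S)$ before invoking Proposition \ref{p5.3a} b). This step is equally implicit in the paper's proof, so it is not a defect of your argument relative to the source; within $B^\gal\Pi$ your proof is complete and, on the linearity issue, more complete than the original.
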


\begin{proof}  (i) has already been seen. The rest follows from Proposition \ref{p5.3a} and Lemma \ref{l1.01}.
\end{proof}

\subsection{A left adjoint structure on $f_*$}\label{left} The following is worth noting, but will not be used in the sequel. 

Suppose that $f^*$ is Cartesian and has a trace structure. For any $A\in \sA$, define $\epsilon'_A=\tr_A:f_*f^*A\to A$; for any $C\in \sA'$, define $\eta'_C:C\to f^*f_* C$ as the inclusion of the $g=1$ summand in the right hand side of \eqref{eq5.8}.

\begin{prop}\label{p2.4} The natural transformations $\epsilon'$ and $\eta'$ verify the (left) adjunction identities, provided $\sA(A,B)$ has no $|\Gamma|$-torsion for any $A,B\in \sA$. 
\end{prop}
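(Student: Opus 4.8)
The plan is to verify the two left-adjunction triangle identities for the candidate unit $\eta'$ and counit $\epsilon'=\tr$, showing $f_*$ is left adjoint to $f^*$. Recall that $\eta'_C:C\to f^*f_*C$ is the inclusion of the $g=1$ summand under the isomorphism \eqref{eq5.8}, and $\epsilon'_A=\tr_A:f_*f^*A\to A$. The two identities to check are that
\[f^*A\by{\eta'_{f^*A}} f^*f_*f^*A\by{f^*\tr_A} f^*A\]
is the identity on $f^*A$, and that
\[f_*C\by{f_*\eta'_C} f_*f^*f_*C\by{\tr_{f_*C}} f_*C\]
is the identity on $f_*C$.

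First I would tackle the identity on $f^*A$. By Condition (2) of the trace structure (Definition \ref{d5.2}), the isomorphism \eqref{eq5.8a} converts $f^*\tr_A$ into the sum map $\bigoplus_{g\in\Gamma} f^*A\to f^*A$. On the other hand, by Remark \ref{r1.1} and the definition of $\eta'$, the composite $\eta'_{f^*A}$ followed by the identification \eqref{eq5.8a} is precisely the inclusion of the $g=1$ summand (the canonical descent datum on $f^*A$ being the relevant one here). The composite of an inclusion of a single summand with the sum map is the identity, so this triangle is immediate. This step should be routine once the identifications are unwound carefully.

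The second identity, on $f_*C$, is where I expect the real obstacle. Here the torsion-freeness hypothesis must enter, since it is exactly what distinguishes this statement from a formal one. My strategy would be to exploit the faithfulness of $f^*$: it suffices to prove the identity after applying $f^*$, reducing to a computation inside $f^*f_*C\cong\bigoplus_{g\in\Gamma} g^*C$ where Lemma \ref{l5.4} lets me track the $\Gamma$-action by right translation on the indexing set. Applying $f^*$ to $f_*\eta'_C$ and to $\tr_{f_*C}$ and using Condition (2) of the trace structure together with Lemma \ref{l5.3} should turn the composite into a manageable matrix acting on the summands. The key point is that the composite, after applying $f^*$, will be seen to equal the identity plus a summand of ``off-diagonal'' terms that must cancel; the no-$|\Gamma|$-torsion hypothesis guarantees one can conclude the underlying morphism is the identity rather than merely $|\Gamma|$ times it, paralleling the role torsion-freeness played in proving injectivity of $a$ in the proof of (1G) in Proposition \ref{p5.3a}.

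The main difficulty will therefore be bookkeeping the second triangle: correctly identifying how $\tr_{f_*C}$ interacts with the product decomposition of $f^*f_*f^*f_*C$ and verifying the cancellation of cross terms. I would organise this by writing everything in terms of the $\epsilon_C^g$ and $[g]$ and invoking Lemmas \ref{l5.3} and \ref{l5.4} systematically, rather than attempting a direct manipulation, and only at the final step divide by $|\Gamma|$ using the torsion hypothesis.
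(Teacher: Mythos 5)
Your overall strategy is the paper's: the first triangle identity falls out of Condition (2) of Definition \ref{d5.2} (inclusion of the $g=1$ summand followed by the sum map is the identity), and the second is handled by applying $f^*$ and computing on $\bigoplus_{g\in\Gamma}g^*C$ via \eqref{eq5.8}. But your account of the second identity misplaces the hypothesis in two ways. First, you invoke the faithfulness of $f^*$ as if it were available for free; it is not. It is \emph{precisely} where the no-$|\Gamma|$-torsion hypothesis is consumed: in the proof of Proposition \ref{p5.3a}~b), the composite $ba$ is multiplication by $|\Gamma|$, and torsion-freeness of the Hom groups is what makes $a$ injective, i.e.\ $f^*$ faithful. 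That is the only role the hypothesis plays in this proposition.

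Second, your predicted final step --- ``identity plus off-diagonal terms that must cancel'' and a division by $|\Gamma|$ --- does not occur, and as described it would not even work: if the composite after applying $f^*$ came out as $|\Gamma|$ times the identity, faithfulness would force the underlying morphism to \emph{be} $|\Gamma|\cdot 1_{f_*C}$, and no torsion hypothesis rescues that. What actually happens is cleaner. Under \eqref{eq5.8}, $f^*f_*\eta'_C$ decomposes as $\bigoplus_{h}h^*\eta'_C$, and $h^*\eta'_C$ is the inclusion of $h^*C$ into the $h$-slot of the $h$-th copy of $f^*f_*C\cong\bigoplus_g g^*C$; composing with $f^*\tr_{f_*C}$, which Condition (2) (applied to $A=f_*C$) identifies with the sum map over $h$, returns each $h^*C$ identically to the $h$-summand of the target. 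There are no cross terms and no factor of $|\Gamma|$: the composite is the identity summand by summand, reducing each summand to the same mechanism as the first triangle identity. With that correction --- faithfulness justified by the torsion hypothesis, and the diagonal computation done honestly --- your plan completes to a correct proof along the paper's lines.
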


\begin{proof} Let $A,C\in \sA\times \sA'$. We must show that the compositions
\begin{gather}
f^*A \by{\eta'_{f^* A}}f^*f_*f^*A\by{f^*\epsilon'_A} f^*A\label{la1}
\intertext{and}
f_*C\by{f_*\eta'_C} f_*f^*f_* C\by{\epsilon'_{f_*C}} f_*C\label{la2}
\end{gather}
are equal to the identity. For \eqref{la1}, this follows from Property (2) of Definition \ref{d5.2}. By Proposition \ref{p5.3a} b) and its proof, $f^*$ is faithful, hence it suffices to prove \eqref{la2} after applying this functor; using \eqref{eq5.8a}, this reduces to the previous case.
\end{proof}

\subsection{Algebras on a monad}\label{s1.3} Here we study the special case where $\sA'$ is the category $\sA^M$ of algebras over a additive monad $M$ in $\sA$ 
\[\sA^M=\{(A,\phi)\mid A\in \sA, MA\by{\phi} A) \}\]
where $\phi$ verifies certain identities \cite[VI.2, definition]{mcl}. The functor $f_*$ sends $(A,\phi)$ to $A$, while $f^*$ sends $A$ to the free algebra with underlying object $MA$, and the counit $\epsilon_A$ is given by the commutative square expressing the associativity of $\phi$ (in particular, $f_*\epsilon_A=\phi$). The following lemma is trivial:

\begin{lemma}\label{l1.2} The forgetful functor $f_*$ is faithful and conservative.\qed
\end{lemma}

A homomorphism $[\, ]:\Gamma\to \End(M)$ (see Definition \ref{dA.1}) yields a (strict) $\Gamma$-action on $\sA'$ ($g^*(A,\phi)=(A,\phi\circ [g])$, and then we are in a special case of the situation above; in particular, the category $\sA'[\Gamma]$ of descent data is defined:
\[\sA'[\Gamma]=\sA^M[\Gamma] =\{(A,\phi,b_g)\mid (A,\phi)\in \sA^M, b_g:A\to A \}\]
where $(b_g)$ verifies the identities $b_g \circ \phi\circ [g] = \phi\circ M(b_g)$ plus the cocycle condition; since the action of $\Gamma$ is strict, $g\mapsto b_g^{-1}$ is a group homomorphism.

\subsection{Codescent}\label{s1.2} The adjunction $(f^*,f_*)$ gives rise to a factorisation of $f_*$ into
\begin{equation}\label{eq1.1}
\sA'\by{K} \sA^M\by{U} \sA 
\end{equation}
for $M=f_*f^*$: the ``comparison'' functor $K$ is given by $K(C)=(f_*C,f_* \epsilon_C)$  (\emph{loc. cit.}, VI.3, Th. 1), and $U$ maps $(A,\phi)$ to $A$. It is $\Gamma$-equivariant.

\begin{prop}\label{p1.2} If $f^*$ is Cartesian and $\sA'$ is pseudo-abelian, then $K$ is an isomorphism of categories.
\end{prop}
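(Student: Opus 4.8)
The plan is to produce an explicit quasi-inverse to $K$ by splitting a single idempotent on each free object, using the section $\sigma_C\colon C\to f^*f_*C$ of $\epsilon_C$ furnished by cartesianity in \eqref{eq1.4} (available since $\sA'$ is additive, hence pointed), together with its naturality and the identity $\epsilon_C\sigma_C=1_C$. Two formal points come for free: since $f_*=U\circ K$ with $U\colon\sA^M\to\sA$ the forgetful functor, Lemma \ref{l1.2} shows that $K$ is faithful and conservative. For fullness, given $C,D\in\sA'$ and an $M$-algebra morphism $\psi\colon f_*C\to f_*D$, I set $\phi=\epsilon_D\circ f^*\psi\circ\sigma_C$. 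Naturality of $\epsilon$ gives $\epsilon_D\circ f^*f_*\phi\circ\sigma_C=\phi\circ\epsilon_C\sigma_C=\phi$ for every $\phi$, while the algebra condition $f_*\epsilon_D\circ M\psi=\psi\circ f_*\epsilon_C$ gives $f_*(\epsilon_D\circ f^*\psi\circ\sigma_C)=f_*\epsilon_D\circ M\psi\circ f_*\sigma_C=\psi\circ f_*(\epsilon_C\sigma_C)=\psi$. Hence $\phi\mapsto f_*\phi=K\phi$ and $\psi\mapsto\epsilon_D\circ f^*\psi\circ\sigma_C$ are mutually inverse, so $K$ is fully faithful. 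This step uses cartesianity but not yet pseudo-abelianness.

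It remains to prove essential surjectivity, which is where pseudo-abelianness enters. Given $(A,\phi)\in\sA^M$, I consider the endomorphism $e=f^*\phi\circ\sigma_{f^*A}$ of $f^*A$, where $\sigma_{f^*A}\colon f^*A\to f^*f_*f^*A=f^*MA$. Naturality of $\sigma$ gives $\sigma_{f^*A}\circ f^*\phi=f^*(M\phi)\circ\sigma_{f^*MA}$ and $\sigma_{f^*A}\circ\epsilon_{f^*A}=f^*\mu_A\circ\sigma_{f^*MA}$ (using $f_*\epsilon_{f^*A}=\mu_A$); feeding these into $e^2$ and invoking the associativity axiom $\phi\circ M\phi=\phi\circ\mu_A$ collapses $e^2$ to $f^*\phi\circ\sigma_{f^*A}\circ(\epsilon_{f^*A}\sigma_{f^*A})=e$, so $e$ is idempotent. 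As $\sA'$ is pseudo-abelian I may split it: let $C=\IM e$ with retraction $P\colon f^*A\to C$ and section $S\colon C\to f^*A$, so $PS=1_C$ and $SP=e$.

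I then claim that $(P,S,T)$ with $T=\sigma_{f^*A}$ exhibits $P$ as a split coequalizer of the pair $f^*\phi,\ \epsilon_{f^*A}\colon f^*MA\rightrightarrows f^*A$. Indeed $PS=1_C$, $\epsilon_{f^*A}\,T=1_{f^*A}$ and $f^*\phi\,T=e=SP$ are immediate; the remaining equation $P\circ f^*\phi=P\circ\epsilon_{f^*A}$ is equivalent (as $S$ is monic) to $e\circ f^*\phi=e\circ\epsilon_{f^*A}$, and the same two naturality relations reduce this difference to $f^*(\phi\circ M\phi-\phi\circ\mu_A)\circ\sigma_{f^*MA}=0$, again by associativity. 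Being split, this coequalizer is absolute, hence preserved by $f_*$: thus $f_*P\colon MA\to f_*C$ is a coequalizer of $(M\phi,\mu_A)$. Since $\phi\colon MA\to A$ is itself the split coequalizer of this pair, there is a unique isomorphism $\theta\colon f_*C\iso A$ with $\theta\circ f_*P=\phi$, and one checks it is compatible with the algebra structures, i.e. $KC=(f_*C,f_*\epsilon_C)\iso(A,\phi)$. As a consistency check, when $(A,\phi)=KC_0$ naturality of $\sigma$ gives $e=\sigma_{C_0}\epsilon_{C_0}$, whose image is $C_0$, so the assignment $L(A,\phi)=\IM e$ genuinely inverts $K$.

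Combining, $K$ is fully faithful and essentially surjective, with explicit quasi-inverse $L$ given by splitting $e$; fixing the splittings makes $K$ and $L$ strictly mutually inverse, which is the asserted isomorphism of categories. The main obstacle is the last identification: proving $e$ idempotent and $P$ a split coequalizer are short diagram chases from the algebra axioms and the naturality of $\sigma$, but transporting the \emph{algebra} structure across $\theta$ (checking $\theta\circ f_*\epsilon_C=\phi\circ M\theta$) and verifying that $L$ is functorial and a strict two-sided inverse to $K$ is the delicate bookkeeping, resting on the interplay between $\sigma$, the counit $\epsilon$ and the Cartesian isomorphism \eqref{eq5.8}.
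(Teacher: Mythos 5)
Your argument is correct in substance, but it takes a genuinely different route from the paper's. The paper deduces the proposition from Beck's precise monadicity theorem \cite[VI.7, Th.\ 1]{mcl}: it shows that an $f_*$-split pair in $\sA'$ has a coequaliser preserved by $f_*$ by passing to the additive dual $\widehat{\sA'}$, forming the coequaliser $E$ there, and using cartesianity to see that $\bigoplus_{g\in\Gamma}g^*E$ is representable, whence $E$ itself is representable as a direct summand because $\sA'$ is pseudo-abelian. You instead construct an explicit quasi-inverse: full faithfulness via the natural section $\sigma$ of $\epsilon$ (the same consequence of cartesianity and pointedness that feeds Lemma \ref{l1.1}), and essential surjectivity by checking that $e=f^*\phi\circ\sigma_{f^*A}$ is idempotent and that its splitting realises the coequaliser of $(f^*\phi,\epsilon_{f^*A})$ as a split fork; your computations of $e^2=e$ and of $e\circ f^*\phi=e\circ\epsilon_{f^*A}$ from the naturality of $\sigma$ and the associativity axiom are correct, and transporting the algebra structure along $\theta$ goes through by precomposing with the split epimorphism $Mf_*P$. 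In effect you are unwinding the proof of Beck's theorem in the special case where the natural retraction $\sigma$ turns $f_*$-split pairs into retracts, so that idempotent-splitting replaces coequalisers; this is the same mechanism as the paper's ``direct summand of $\bigoplus_g g^*E$'' step, made concrete. Your version is more self-contained; the paper's is shorter by outsourcing the bookkeeping to Beck.

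One caveat: what you actually prove is that $K$ is an \emph{equivalence}. The image of an idempotent is defined only up to isomorphism, so even after fixing splittings your $L$ satisfies $LK\cong\Id_{\sA'}$ and $KL\cong\Id_{\sA^M}$, not equality; the closing claim that the two functors become ``strictly mutually inverse'' does not follow. The literal conclusion ``isomorphism of categories'' is what the creation clause of Beck's theorem delivers (uniqueness on the nose of the lifted coequaliser), and your construction does not address that uniqueness. This is harmless for the use made of the proposition in Theorem \ref{t4.1}, where an equivalence suffices, but you should either weaken your stated conclusion or supply the on-the-nose uniqueness argument.
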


\enlargethispage*{20pt}

Curiously, this proposition will only be used for a going up result in Theorem \ref{t4.1}.

\begin{proof} 
 Let $(\partial_0,\partial_1):C\rightrightarrows D$ be a pair of morphisms in $\sA'$. Assume that $(f_*\partial_0,f_*\partial_1)$ has a universal coequaliser in the sense of \cite[VI.6]{mcl}.  Let $\widehat{\sA'}$ be the additive dual of $\sA'$, $y:\sA'\to\widehat{\sA'}$ the additive Yoneda embedding,  and let $E$ be the coequaliser of $(y(\partial_0),y(\partial_1))$. Then $\bigoplus_{g\in \Gamma} g^*E$ is the coequaliser of $(\bigoplus_{g\in \Gamma}g^*y(\partial_0),\bigoplus_{g\in \Gamma}g^*y(\partial_1))$, where $g^*$ is the extension of $g^*:\sA'\to \sA'$ to $\widehat{\sA'}$ via $y$. By the cartesianity of $f^*$ and by the hypothesis on $(f_*\partial_0,f_*\partial_1)$ applied to $f^*$ and $y$, $\bigoplus_{g\in \Gamma} g^*E$ is representable, hence so is its direct summand $E$ since $\sA'$ is assumed to be pseudo-abelian. The conclusion now follows from Beck's theorem \cite[VI.7, Th. 1]{mcl}.
\end{proof}

\subsection{A stability property} Suppose that we have naturally commutative diagrams of additive categories and functors
\begin{equation}\label{eq2.5}
\begin{CD}
\sA'@>\gamma'>> \sB'\\
@Af^*AA @Af_\sB^*AA\\
\sA@>\gamma>> \sB
\end{CD}
\end{equation}
\begin{equation}\label{eq2.5a}
\begin{CD}
\sA'@>\gamma'>> \sB'\\
@Vf_*VV @V f^\sB_*VV\\
\sA@>\gamma>> \sB
\end{CD}
\end{equation}
where $f^\sB_*$ is right adjoint to $f_\sB^*$. The following proposition is trivial but very useful.

\begin{prop}\label{p1.3} a) Assume that $\gamma'$ is conservative. If $f_\sB^*$ is Cartesian, so is $f^*$.\\
b) (see also Theorem \ref{t4.1}). If moreover $\gamma$ is fully faithful, a trace structure on $f^*_\sB$ induces a unique trace structure on $f^*$.\qed
\end{prop}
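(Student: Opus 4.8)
The plan is to prove both parts by transporting the relevant structure across $\gamma'$ (resp.\ $\gamma$), exploiting that $\gamma'$ is conservative in a) and that $\gamma$ is fully faithful in b). Throughout I read the two naturally commutative squares as natural isomorphisms $\gamma' f^*\iso f_\sB^*\gamma$ and $\gamma f_*\iso f^\sB_*\gamma'$, and I take these to be \emph{mates} under the adjunctions $(f^*,f_*)$ and $(f_\sB^*,f^\sB_*)$ (the canonical reading, since \eqref{eq2.5a} is asserting that the mate of the isomorphism in \eqref{eq2.5} is again an isomorphism). Since all functors respect the $\Gamma$-actions, $\gamma'$ also intertwines the $g^*$ and the coherence isomorphisms $i_g$.

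For a), recall that $f^*$ is Cartesian iff the morphism \eqref{eq5.8}, with $g$-component $g^*\epsilon_C\circ i_g(f_*C)$, is an isomorphism for every $C\in\sA'$. Since $\gamma'$ is conservative it suffices to show that $\gamma'$ applied to \eqref{eq5.8} is an isomorphism. Granting the mate compatibility, $\gamma'$ carries the counit $\epsilon$ to $\epsilon^\sB$ and intertwines the $i_g$, so that — under the canonical identifications $\gamma' f^* f_* C\cong f_\sB^* f^\sB_*\gamma'C$ and $\gamma'(\bigoplus_g g^*C)\cong\bigoplus_g g^*\gamma'C$ — the image under $\gamma'$ of \eqref{eq5.8} for $f^*$ at $C$ is exactly \eqref{eq5.8} for $f_\sB^*$ at $\gamma'C$. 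The latter is an isomorphism by hypothesis, and conservativity of $\gamma'$ concludes.

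For b), given $\tr^\sB:f^\sB_* f_\sB^*\Rightarrow\Id_\sB$, I would define $\tr_A:f_*f^*A\to A$ for $A\in\sA$ as the unique morphism whose image under $\gamma$ is
\[\gamma f_* f^* A \iso f^\sB_* f_\sB^* \gamma A \by{\tr^\sB_{\gamma A}} \gamma A,\]
the isomorphism coming from the two squares; existence and uniqueness of $\tr_A$ — hence also the uniqueness claimed in the statement — are guaranteed by full faithfulness of $\gamma$, and naturality of $\tr$ follows from that of $\tr^\sB$ plus faithfulness of $\gamma$. It remains to verify Conditions (1) and (2) of Definition \ref{d5.2}. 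For (1), apply the faithful $\gamma$ to $\tr_A\circ\eta_A$: using that $\gamma$ sends $\eta$ to $\eta^\sB$ (again the mate property) this becomes $\tr^\sB_{\gamma A}\circ\eta^\sB_{\gamma A}=|\Gamma|\cdot\Id_{\gamma A}$, and faithfulness forces $\tr_A\circ\eta_A=|\Gamma|\cdot\Id_A$. For (2), apply the faithful $\gamma'$ and use, as in a), that $\gamma'$ carries \eqref{eq5.8a} for $f^*$ at $f^*A$ to \eqref{eq5.8a} for $f_\sB^*$ at $f_\sB^*\gamma A$ and $f^*\tr_A$ to $f_\sB^*\tr^\sB_{\gamma A}$; Condition (2) for $\tr^\sB$ yields the sum map, and faithfulness transports the identity back.

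The only genuine content — and the main thing to pin down — is this mate compatibility of the two natural isomorphisms in \eqref{eq2.5} and \eqref{eq2.5a}: that $\gamma'$ (resp.\ $\gamma$) intertwines the counits $\epsilon,\epsilon^\sB$, the units $\eta,\eta^\sB$, and the coherence isomorphisms $i_g$. Once this is recorded, everything else is formal, which is why the statement is labelled trivial; the two hypotheses, ``$\gamma'$ conservative'' and ``$\gamma$ fully faithful'', are precisely what let one descend the isomorphism and the trace morphism, respectively, from the $\sB$-side to the $\sA$-side.
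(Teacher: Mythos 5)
The paper offers no proof of this proposition at all --- it is declared ``trivial but very useful'' and the statement carries its own end-of-proof mark --- so your write-up is in effect supplying the intended argument, and it is essentially the right one: transport the morphism \eqref{eq5.8} along $\gamma'$ and use conservativity for a); transport $\tr^\sB$ along the fully faithful $\gamma$ and check the two axioms of Definition \ref{d5.2} for b). You are also right to isolate the mate compatibility of the two squares \eqref{eq2.5} and \eqref{eq2.5a} (together with $\Gamma$-equivariance of $\gamma'$) as the only genuine content; the paper leaves this implicit, but it holds in every situation where the proposition is invoked (for instance \eqref{eq2.3} in \S\ref{s3.1} is by construction the mate of the identification $\gamma f_*= f^\sB_*\gamma'$), and without it the statement would simply be false.

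One step needs patching. When verifying Condition (2) you ``apply the faithful $\gamma'$'', but $\gamma'$ is only assumed \emph{conservative}, and a conservative additive functor need not be faithful, so as written this step is unjustified. The gap closes in one line: once part a) is established, $f^*$ is Cartesian, so every $C\in\sA'$ is a retract of $f^*f_*C$ via $(\epsilon_C,\sigma_C)$ (Lemma \ref{l1.1} a)); hence $f_*$ is faithful, and since $\gamma f_*\simeq f^\sB_*\gamma'$ with $\gamma$ faithful, $\gamma'$ is faithful as well. (Alternatively, transpose the desired identity $f^*\tr_A=\Sigma\circ u_{f^*A}$ through the adjunction $(f^*,f_*)$ into an identity between endomorphisms of $f_*f^*A$ in $\sA$, and check that one after applying the faithful $\gamma$, using the mate property once more.) With that line added, your proof is complete.
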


\section{The monoidal case}\label{s.mon}

In this section, we assume that the additive categories $\sA$ and $\sA'$ are $\otimes$-categories and that the base change functor $f^*$ is a $\otimes$-functor: see \emph{Terminology} at the end of the introduction. We write $\un$ for the unit object of both $\sA$ and $\sA'$; this will not cause confusion (note that $f^*\un =\un$).

\subsection{Weak properties}\label{s2.1}
We have a ``projection morphism''
\[A\otimes f_*C\to f_*(f^*A\otimes C)\]
for $(A,C)\in \sA\times \sA'$, constructed as the adjoint of
\[f^*(A\otimes f_*C)\iso f^*A\otimes f^*f_*C \by{1\otimes\epsilon_C} f^*A\otimes C\]
where the first isomorphism is the inverse of the monoidal structure of $f^*$. For $C=\un$, we thus get a morphism
\begin{equation}\label{eq2.1}
w_A:A\otimes f_*\un\to f_*f^*A.
\end{equation}

By definition of $w_A$, we have

\begin{lemma}\label{l2.2} Modulo the monoidal structure of $f^*$, one has the identity $\epsilon_{f^*A} \circ f^*w_A=\epsilon_{f^*\un}\otimes 1_{f^*A}$.\qed
\end{lemma}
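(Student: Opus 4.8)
The plan is to unwind the definition of the projection morphism and the morphism $w_A$, then verify the claimed identity by a direct chain of adjunction and naturality manipulations. Recall that $w_A\colon A\otimes f_*\un\to f_*f^*A$ is, by its construction as an adjoint, characterised by the fact that its adjoint transpose $f^*(A\otimes f_*\un)\to f^*A$ is the composite
\[
f^*(A\otimes f_*\un)\iso f^*A\otimes f^*f_*\un\by{1\otimes\epsilon_{\un}} f^*A\otimes \un\iso f^*A,
\]
where the outer isomorphisms come from the monoidal structure of $f^*$ (and $f^*\un=\un$). Equivalently, unwinding the adjunction triangle, $\epsilon_{f^*A}\circ f^*w_A$ is obtained from this transpose composite by the usual counit formula.

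First I would write out the transpose relation explicitly. For any morphism $\psi\colon X\to f_*Y$ with adjoint $\phi = \epsilon_Y\circ f^*\psi\colon f^*X\to Y$, and here $X = A\otimes f_*\un$, $Y = f^*A$, $\psi = w_A$. So by definition $\epsilon_{f^*A}\circ f^* w_A$ is exactly the adjoint transpose $\phi$ of $w_A$, which is the displayed composite above. Thus the left-hand side of the asserted identity is, modulo the monoidal structure isomorphisms of $f^*$, precisely $1_{f^*A}\otimes\epsilon_{\un}$ read as a map $f^*A\otimes f^*f_*\un\to f^*A\otimes\un\cong f^*A$. The remaining task is to match this with the right-hand side $\epsilon_{f^*\un}\otimes 1_{f^*A}$. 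Here the key observation is that $\epsilon_{\un}\colon f^*f_*\un\to\un$ and $\epsilon_{f^*\un}\colon f^*f_*f^*\un\to f^*\un$ agree after identifying $f^*\un=\un$; more precisely, since $f^*\un=\un$ as the monoidal unit, the counit $\epsilon_{\un}$ at the object $\un=f^*\un$ literally is $\epsilon_{f^*\un}$. The symmetry of the tensor structure then lets me rewrite $1_{f^*A}\otimes\epsilon_{\un}$ as $\epsilon_{f^*\un}\otimes 1_{f^*A}$, which is the claimed formula.

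The whole statement is flagged as immediate (``By definition of $w_A$''), so the proof is really just a matter of tracing the adjunction transpose carefully, and the author indeed marks it with \qed rather than giving a separate argument. The main—indeed only—subtlety I expect is bookkeeping: keeping track of which counit $\epsilon$ sits at which object once one invokes the identification $f^*\un=\un$, and making sure the two monoidal-structure isomorphisms of $f^*$ (on $A\otimes f_*\un$ and on the symmetry swap) cancel correctly so that the phrase ``modulo the monoidal structure of $f^*$'' is justified. Concretely, the single point to check is that the natural isomorphism $f^*(A\otimes f_*\un)\cong f^*A\otimes f^*f_*\un$ followed by $1\otimes\epsilon_\un$ coincides, under the symmetry and the identification at the unit, with $\epsilon_{f^*\un}\otimes 1_{f^*A}$. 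Since these are all instances of the coherence axioms for a symmetric monoidal functor, the verification is routine and no genuine obstacle arises; I would simply display the transpose of $w_A$ and read off the equality.
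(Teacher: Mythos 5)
Your proposal is correct and is exactly the argument the paper leaves implicit behind ``By definition of $w_A$'': the adjoint transpose of $w_A$ is $\epsilon_{f^*A}\circ f^*w_A$, which by construction is $1_{f^*A}\otimes\epsilon_{f^*\un}$ read through the monoidal structure isomorphisms of $f^*$ and the identification $f^*\un=\un$. Your extra care about the order of the tensor factors (symmetry) and about which counit sits at which object is appropriate and matches how the lemma is actually used in Diagram \eqref{eq2.2}.
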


\begin{defn} We say that $f^*$ verifies the \emph{weak projection formula} if $w_A$ is an isomorphism for any $A\in \sA$, and is \emph{weakly Cartesian} if \eqref{eq5.8a} is an isomorphism for $C=\un$.
\end{defn}

\begin{lemma}\label{l2.1} Suppose that $f^*$ verifies the weak projection formula and is weakly Cartesian. Then $f^*$ is Cartesian if and only if, moreover, it is dense (Definition \ref{d1.1} a)).
\end{lemma}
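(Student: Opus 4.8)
The plan is to prove the two implications separately, the forward one being essentially free and the reverse one carrying all the content. Throughout I write $\delta\colon f^*f_*\Rightarrow\bigoplus_{g\in\Gamma}g^*$ for \eqref{eq5.8}, viewed as a natural transformation of functors $\sA'\to\sA'$.

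For ``Cartesian $\Rightarrow$ dense'', nothing new is needed: since $\sA'$ is additive it is pointed (its zero object is both initial and final), so Lemma \ref{l1.1} a) applies verbatim and shows that a Cartesian $f^*$ is dense. Note this direction uses neither the weak projection formula nor weak cartesianity. For ``dense $\Rightarrow$ Cartesian'', I must show that $\delta_C$ is an isomorphism for every $C$, and I would first reduce to objects of the form $C=f^*A$. By density, every $C$ is a retract of some $f^*A$; applying the functors $f^*f_*$ and $\bigoplus_g g^*$ to a splitting $C\to f^*A\to C$ and using the naturality of $\delta$ exhibits $\delta_C$ as a retract of $\delta_{f^*A}$ in the category of arrows. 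Since a retract of an isomorphism is an isomorphism, it then suffices to treat $C=f^*A$.

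The heart of the matter is to identify $\delta_{f^*A}$, up to the available isomorphisms, with $1_{f^*A}\otimes\delta_\un$, thereby reducing the case $C=f^*A$ to the case $C=\un$, where weak cartesianity says exactly that $\delta_\un$ is an isomorphism. On one side, the weak projection formula gives $f^*w_A\colon f^*(A\otimes f_*\un)\iso f^*f_*f^*A$, which composed with the monoidal structure presents $f^*f_*f^*A$ as $f^*A\otimes f^*f_*\un$; on the other side the canonical isomorphisms $i_g(A)\colon g^*f^*A\iso f^*A$ present $\bigoplus_g g^*f^*A$ as $f^*A\otimes\bigoplus_g\un$. I claim that under these identifications $\delta_{f^*A}$ becomes $1_{f^*A}\otimes\delta_\un$. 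This is verified on $g$-components: the $g$-component of $\bigl(\bigoplus_g i_g(A)\bigr)\circ\delta_{f^*A}$ is exactly $\epsilon_{f^*A}^g$, by the very definition of \eqref{eq5.8a} for the canonical descent datum $b_g=i_g(A)$ on $f^*A$; and using the behaviour of $(-)^g$ under precomposition with morphisms of the form $f^*\psi$ (\S\ref{s1.1}) together with Lemma \ref{l2.2}, one rewrites $\epsilon_{f^*A}^g\circ f^*w_A$ as $\epsilon_\un^g$ tensored with $1_{f^*A}$. Granting this, $\delta_{f^*A}$ is the tensor product of an identity with the isomorphism $\delta_\un$, hence an isomorphism, completing the reduction and the proof.

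The main obstacle is precisely this last $g$-component bookkeeping. Lemma \ref{l2.2} records only the untwisted ($g=1$) compatibility of $w_A$ with the counit, so the work is to propagate it through the $\Gamma$-action: one must know that the action is compatible with the tensor structure (each $g^*$ is a $\otimes$-functor and the $i_g$ are monoidal), so that the canonical descent datum on $f^*A\otimes f^*f_*\un$ agrees, via the monoidal structure, with the one on $f^*(A\otimes f_*\un)$. Once this compatibility is in place, applying $(-)^g$ to the equation of Lemma \ref{l2.2} and invoking the naturality of the $i_g$ (as in \S\ref{s1.1} and Lemma \ref{l5.3}) yields the required twisted identity and closes the argument.
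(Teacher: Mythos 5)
Your proof is correct and follows essentially the same route as the paper: the forward implication via Lemma \ref{l1.1} a), and the converse by using the weak projection formula and weak cartesianity to show that \eqref{eq5.8a} is an isomorphism on objects $f^*A$ (the paper's diagram \eqref{eq2.2}), then extending to all of $\sA'$ by the retract argument from density. Your extra care about the $g$-component bookkeeping---that Lemma \ref{l2.2} only records the $g=1$ case and must be propagated through the monoidal $\Gamma$-action---is a detail the paper compresses into ``it commutes by Lemma \ref{l2.2}'', and you handle it correctly.
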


\begin{proof} ``Only if'' follows from Lemma \ref{l1.1} a). If: for $A\in \sA$, consider the diagram
\begin{equation}\label{eq2.2}
\begin{CD}
f^*A\otimes f^*f_*\un@>f^*w_A>> f^*f_*f^*A\\
@V1\otimes u_{\un}VV @Vu_{f^*A}VV\\
f^*A\otimes \bigoplus\limits_{g\in \Gamma} \un@>\bigoplus_{g\in \Gamma} e_{f^*A} >> \bigoplus\limits_{g\in \Gamma} f^*A
\end{CD}
\end{equation}
where $e_{f^*A}$ is the unit constraint: it commutes by Lemma \ref{l2.2}. The bottom horizontal map is an isomorphism; so are the top and left vertical ones by assumption. Therefore $u_{f^*A}$ is also an isomorphism. By the denseness hypothesis, $u_C$ is then an isomorphism for every $C\in \sA'$.
\end{proof}

Suppose that $f^*$ is Cartesian and admits a trace structure in the sense of Definition \ref{d5.2}. Then there is a morphism $\tr:f_*\un\to \un$ such that
\begin{enumerate}[label={\rm (\arabic*u)}] 
\item the composition
\begin{equation*}
\un\by{\eta_{\un}} f_*\un\by{\tr} \un
\end{equation*}
is multiplication by $|\Gamma|$;
\item the isomorphism \eqref{eq5.8a} (for $C=\un$) converts $f^*\tr$ into the sum map.
\end{enumerate}

\begin{defn}\label{d2.1} We call this a \emph{weak trace structure}.
\end{defn}

Conversely:

\begin{prop}\label{p2.1} Suppose that $f^*$ verifies the weak projection formula and is weakly Cartesian. Then a weak trace structure yields a trace structure on $f^*$ by the formula $\tr_A=(1_A\otimes \tr)\circ w_A^{-1}$ for $A\in \sA$.
\end{prop}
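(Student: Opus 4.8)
The plan is to prove that the candidate $\tr_A=(1_A\otimes \tr)\circ w_A^{-1}$ is a bona fide trace structure by verifying the two conditions of Definition \ref{d5.2}, after first checking that $f^*$ is actually Cartesian (so that the statement makes sense). Under the standing hypotheses of Proposition \ref{p2.1} --- the weak projection formula and weak cartesianity --- Lemma \ref{l2.1} tells us that $f^*$ is Cartesian as soon as it is dense; but a weak trace structure should supply the missing ingredient by producing a section of $\epsilon_{f^*A}$, hence density. So my first step would be to extract from the weak trace morphism $\tr:f_*\un\to\un$, via the weak projection formula isomorphism $w_A$, a natural transformation $f_*f^*A\to A$ and check that it splits the relevant maps. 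Alternatively, since Lemma \ref{l1.1} a) shows cartesianity already gives density, I would simply note that density follows once \eqref{eq5.8a} is established for all $C$, which is what Lemma \ref{l2.1} organises.

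First I would verify Condition (1) of Definition \ref{d5.2}. We must show that the composite $A\xrightarrow{\eta_A} f_*f^*A\xrightarrow{\tr_A} A$ is multiplication by $|\Gamma|$. Unwinding the definition $\tr_A=(1_A\otimes\tr)\circ w_A^{-1}$, it suffices to understand $w_A^{-1}\circ\eta_A$, i.e. to identify $\eta_A$ through the isomorphism $w_A:A\otimes f_*\un\iso f_*f^*A$. The plan is to show that $w_A\circ(1_A\otimes\eta_\un)=\eta_A$ modulo the unit constraint $A\iso A\otimes\un$; this is a standard compatibility of the projection morphism with units, provable by adjunction using Lemma \ref{l2.2} together with the triangle identity $\epsilon_{f^*A}\circ f^*\eta_A=1$. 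Granting this, $\tr_A\circ\eta_A$ becomes $(1_A\otimes\tr)\circ(1_A\otimes\eta_\un)=1_A\otimes(\tr\circ\eta_\un)$, which by Condition (1u) of the weak trace structure is $1_A\otimes(|\Gamma|)=|\Gamma|\cdot 1_A$, as required.

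Next I would verify Condition (2): that the isomorphism \eqref{eq5.8a} for $C=f^*A$ converts $f^*\tr_A$ into the sum map $\bigoplus_{g\in\Gamma}f^*A\to f^*A$. Here the key diagram is \eqref{eq2.2} from the proof of Lemma \ref{l2.1}, which expresses $u_{f^*A}\circ f^*w_A=(\bigoplus_g e_{f^*A})\circ(1\otimes u_\un)$. Applying $f^*$ to $\tr_A=(1_A\otimes\tr)\circ w_A^{-1}$ and chasing \eqref{eq2.2}, the task reduces to the weak statement: that $u_\un$ converts $f^*\tr$ into the sum map on $\bigoplus_g\un$, which is precisely Condition (2u). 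The functoriality of the tensor product and the commutativity of \eqref{eq2.2} then propagate this from $\un$ to $f^*A$, since tensoring the sum map $\bigoplus_g\un\to\un$ with $f^*A$ yields the sum map $\bigoplus_g f^*A\to f^*A$ after composing with the unit constraints.

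The main obstacle I anticipate is bookkeeping the monoidal coherence isomorphisms correctly --- in particular matching the unit constraint $e_{f^*A}$, the inverse monoidal structure of $f^*$, and the definition of $w_A$ as an adjoint, so that the two reductions above are genuinely the $A=\un$ (weak) cases tensored up by $A$. The conceptual content is light: everything is the assertion that $\tr_A$ is obtained from $\tr$ by the projection-formula identification, and both defining conditions are $\otimes$-linear in the variable being traced. The care lies in confirming that $w_A$ is compatible with $\eta$ and with \eqref{eq5.8a} in the precise senses needed; once Lemma \ref{l2.2} and diagram \eqref{eq2.2} are invoked, no further computation beyond naturality should be necessary.
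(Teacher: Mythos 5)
Your verification of conditions (1) and (2) is correct and is exactly the paper's (very terse) argument: (1) reduces to (1u) via the compatibility $w_A\circ(1_A\otimes\eta_\un)=\eta_A$, checked by adjunction, and (2) reduces to (2u) by chasing Diagram \eqref{eq2.2}. The one flawed piece is your opening paragraph: a weak trace structure does \emph{not} supply density of $f^*$ (a section of $\epsilon_{f^*A}$ only exhibits $f^*A$ as a retract of $f^*f_*f^*A$, which says nothing about a general $C\in\sA'$), and your ``alternative'' via Lemma \ref{l2.1} is circular, since that lemma takes density as input. Fortunately none of this is needed: both conditions of Definition \ref{d5.2} only invoke \eqref{eq5.8a} at objects of the form $C=f^*A$, and Diagram \eqref{eq2.2} shows $u_{f^*A}$ is an isomorphism under just the weak projection formula and weak cartesianity; full cartesianity is assumed separately where the proposition is applied (Corollary \ref{c2.1}). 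So drop the first paragraph and the rest stands.
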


\begin{proof} The first identity of Definition \ref{d5.3} is clear from (1u), and the second one follows from (2u) by using Diagram \eqref{eq2.2} again.
\end{proof}

\begin{ex}\label{ex2.1} $\sA=\Rep_K(G)$, $\sA'=\Rep_K(H)$ for two affine group schemes $G\supseteq H$ over a field $K$ such that $H\triangleleft G$ and $G/H\simeq \Gamma$. Then $f^*$ identifies with restriction $\Res_H^G$, whose right adjoint  is induction $\Ind_H^G$. Cartesianity and weak trace structure follow respectively from the Mackey formula and Frobenius reciprocity.
\end{ex}

\begin{cor}\label{c2.1} If $f^*$ has descent, it verifies the weak projection formula, is  
Cartesian and has a weak trace structure; the converse is true if $\sA$ is pseudo-abelian and $\Z[1/|\Gamma|]$-linear.
\end{cor}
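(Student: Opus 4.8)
The plan is to reduce the whole statement to the earlier Propositions \ref{p5.3a} and \ref{p2.1}, so that the only genuinely new content lies in the weak projection formula for the forward implication.

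For the forward direction I would argue as follows. Suppose $f^*$ has descent. Then Proposition \ref{p5.3a} a) already gives that $f^*$ is Cartesian and carries a trace structure $\tr:f_*f^*\Rightarrow\Id_\sA$. I would obtain the weak trace structure simply by specialising this natural transformation, together with conditions (1) and (2) of Definition \ref{d5.2}, to the object $\un$ (using $f^*\un=\un$): this yields $\tr_\un:f_*\un\to\un$ and the conditions (1u), (2u). The remaining point — the weak projection formula — is where the real work sits: I would show that $w_A$ in \eqref{eq2.1} is an isomorphism by running diagram \eqref{eq2.2} backwards. That diagram commutes by Lemma \ref{l2.2}; its bottom arrow is an isomorphism, and cartesianity makes the two vertical arrows $1\otimes u_{\un}$ and $u_{f^*A}$ isomorphisms (they are instances of \eqref{eq5.8a} for the canonical descent data on $\un$ and on $f^*A$), so $f^*w_A$ is an isomorphism. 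To descend this to $w_A$ I need $f^*$ conservative; this follows because descent makes the comparison functor $\hat f^*:\sA\to\sA'[\Gamma]$ an equivalence while the forgetful functor $\tilde f^*$ is conservative by Lemma \ref{l1.0}, and $f^*=\tilde f^*\circ\hat f^*$.

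For the converse I would simply chain the two propositions. Assuming $\sA$ pseudo-abelian and $\Z[1/|\Gamma|]$-linear, and that $f^*$ verifies the weak projection formula, is Cartesian and has a weak trace structure: being Cartesian it is a fortiori weakly Cartesian (the case $C=\un$), so Proposition \ref{p2.1} applies and upgrades the weak trace structure to a genuine trace structure via $\tr_A=(1_A\otimes\tr)\circ w_A^{-1}$. With $f^*$ now Cartesian and equipped with a trace structure, Proposition \ref{p5.3a} b) delivers descent.

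The hard part is really just the weak projection formula in the forward direction: everything else is a citation. The crux there is recognising that descent gives \emph{conservativity} of $f^*$ (through the factorisation $f^*=\tilde f^*\circ\hat f^*$), which is exactly what converts the isomorphism $f^*w_A$ produced by \eqref{eq2.2} into an isomorphism $w_A$.
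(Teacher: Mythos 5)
Your proof is correct and follows essentially the same route as the paper, whose entire proof is the one line ``Collect Proposition \ref{p5.3a} and Proposition \ref{p2.1}'' --- exactly the chaining you perform in the converse direction. The only place you go beyond the paper is the weak projection formula in the forward direction, which the paper leaves implicit; your argument for it (reading diagram \eqref{eq2.2} backwards to get that $f^*w_A$ is invertible, then descending along $f^*$, which is conservative because $f^*=\tilde f^*\circ\hat f^*$ with $\hat f^*$ an equivalence under descent and $\tilde f^*$ conservative by Lemma \ref{l1.0}) is a valid way to supply that missing detail.
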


\begin{proof} Collect Proposition \ref{p5.3a} 
and Proposition \ref{p2.1}.
\end{proof}

\subsection{A monoidal retraction}\label{s2.2} We come back to the situation  of Lemma \ref{l1.1} b), where we now assume that $\sB$ is a $\otimes$-category and that $a,b$ are $\otimes$-functors; hence so are also $af^*$ and $bf^*$. We write $\Hom^\otimes(af^*,bf^*)$ and $\Hom^\otimes(a,b)$ for the sets of \emph{not necessarily unital} $\otimes$-natural transformations, so that $f_!$ carries the latter to the former.

\begin{prop} \label{p2.2} The retraction $\rho$ of Lemma \ref{l1.1} b) carries $\Hom^\otimes(af^*,bf^*)$ to $\Hom^\otimes(a,b)$.  If moreover $f^*$ verifies the weak projection formula, then $v\in  \Hom(af^*,bf^*)$ is in the image of $f_!$ if and only if it commutes with $\epsilon_{f^*\un}$.
\end{prop}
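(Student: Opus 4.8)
Proposition \ref{p2.2} has two assertions. Let me sketch how I'd prove each.

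For the first claim — that $\rho$ carries $\otimes$-natural transformations to $\otimes$-natural transformations — the plan is to take $v \in \Hom^\otimes(af^*, bf^*)$ and verify directly that $\rho(v)$ is compatible with the monoidal structures of $a$ and $b$. Recall from Lemma \ref{l1.1} b) that $\rho(v)_C = b(\epsilon_C)\, v_{f_*C}\, a(\sigma_C)$. The monoidal compatibility condition for $\rho(v)$ involves the structural morphisms $a(C) \otimes a(D) \to a(C \otimes D)$ and similarly for $b$. I would write out both sides of the required diagram and rewrite everything in terms of $f^*$ applied to the relevant maps, then invoke the $\otimes$-naturality of $v$ (which is a statement about $af^*$ and $bf^*$). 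The natural object to exploit here is the canonical section $\sigma_C$ of $\epsilon_C$ from \eqref{eq1.4} and its behaviour under $\otimes$; in the Cartesian pointed situation $\sigma$ is built from the $g=1$ inclusion into $\prod_g g^*C$, and I expect $\sigma_{C\otimes D}$ to factor compatibly through $\sigma_C \otimes \sigma_D$ modulo the monoidal structure of $f^*$. This is essentially a diagram chase.

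For the second claim — the image criterion under the weak projection formula — the plan is to improve the general criterion of Lemma \ref{l1.1} b), which says $v$ lies in the image of $f_!$ iff it commutes with all the $\epsilon_{f^*A}$, $A \in \sA$. The goal is to reduce this infinite family of conditions to the single condition at $A = \un$. The key tool is Lemma \ref{l2.2}, which gives the identity $\epsilon_{f^*A} \circ f^*w_A = \epsilon_{f^*\un} \otimes 1_{f^*A}$ modulo the monoidal structure of $f^*$. When $w_A$ is an isomorphism (the weak projection formula), this expresses $\epsilon_{f^*A}$ in terms of $\epsilon_{f^*\un}$ tensored with an identity. The plan is: assuming $v$ is $\otimes$-natural and commutes with $\epsilon_{f^*\un}$, I would use the $\otimes$-naturality of $v$ together with the Lemma \ref{l2.2} identity to deduce that $v$ then commutes with each $\epsilon_{f^*A}$, and conclude via Lemma \ref{l1.1} b). Concretely, commuting with $\epsilon_{f^*\un} \otimes 1_{f^*A} = \epsilon_{f^*\un} \otimes 1_{f^*A}$ follows from commuting with $\epsilon_{f^*\un}$ because $v$ is monoidal, and then transporting along the isomorphism $f^*w_A$ yields commutation with $\epsilon_{f^*A}$.

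The main obstacle I anticipate is bookkeeping rather than conceptual: the statements involve transformations that are \emph{not necessarily unital} $\otimes$-natural, so I must be careful that the monoidal-naturality squares I invoke are exactly those available for such $v$, and that all identities are read ``modulo the monoidal structure of $f^*$'' consistently (the isomorphisms $f^*(A\otimes B) \cong f^*A \otimes f^*B$ must be inserted and tracked at each step). The transport along $f^*w_A$ requires that $v_{f^*A}$, which a priori is a map $af^*f^*A \to bf^*f^*A$ depending on $f^*A$, interacts correctly with $f^*w_A: f^*A \otimes f^*f_*\un \to f^*f_*f^*A$; verifying this uses the $\otimes$-naturality of $v$ applied to the morphism $w_A$ (pulled back by $f^*$). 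Once the Lemma \ref{l2.2} identity is in hand and $w_A$ is inverted, the reduction to $A = \un$ should fall out, so I expect the bulk of the work to be the first claim's diagram chase and the careful monoidal bookkeeping in the second.
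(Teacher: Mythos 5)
Your route is the paper's for both halves: a direct verification that $\rho(v)$ is monoidal starting from $\rho(v)_C=b(\epsilon_C)\,v_{f_*C}\,a(\sigma_C)$, and a reduction of the criterion of Lemma \ref{l1.1} b) to the single object $\un$ via Lemma \ref{l2.2} and the invertibility of $w_A$. The second half is essentially complete and correct: plain naturality of $v$ with respect to the $\sA$-morphism $w_A$ handles the transport, and monoidality of $v$ lets you tensor the commuting square for $\epsilon_{f^*\un}$ with the identity square on $f^*A$. Your insistence that $v$ be $\otimes$-natural here is the right reading (the paper's terse proof uses it implicitly), though note the indexing slip: the relevant component is $v_{f_*f^*A}$, not ``$v_{f^*A}$'', and only plain (not $\otimes$-) naturality is needed for the $w_A$ step.

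For the first claim your sketch stops just short of the one real idea. After using monoidality of $v$, $a$, $b$ and $f^*$ to rewrite $\rho(v)_C\otimes\rho(v)_D$ as $b(\epsilon_C\otimes\epsilon_D)\circ v_{f_*C\otimes f_*D}\circ a(\sigma_C\otimes\sigma_D)$, you must still compare $v_{f_*C\otimes f_*D}$ with $v_{f_*(C\otimes D)}$, which live at genuinely different objects of $\sA$; and since neither $\epsilon$ nor $\sigma$ is $f^*$ of an $\sA$-morphism, ``rewriting everything in terms of $f^*$ applied to the relevant maps'' cannot be carried out literally. What closes the gap in the paper is the lax monoidal structure $\mu\colon f_*C\otimes f_*D\to f_*(C\otimes D)$ of $f_*$ --- which \emph{is} an $\sA$-morphism --- satisfying $\epsilon_{C\otimes D}\circ f^*\mu=\epsilon_C\otimes\epsilon_D$, together with the dual relation $\lambda\circ\sigma_{C\otimes D}=\sigma_C\otimes\sigma_D$ for the diagonal section $\lambda$ of $f^*\mu$ supplied by the Cartesian decomposition; plain naturality of $v$ with respect to $\mu$ and the identity $\mu\lambda=1$ then make the two correction factors cancel. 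Your expectation about $\sigma_{C\otimes D}$ versus $\sigma_C\otimes\sigma_D$ is correct but is only half of this mechanism; identifying $\mu$, $\lambda$ and $\mu\lambda=1$ is what turns ``essentially a diagram chase'' into a proof.
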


\begin{proof} Let $u\in \Hom^\otimes(af^*,bf^*)$, and let $C,D\in \sA'$. We have to show that
\[
\rho(u)_{C\otimes D} =\rho(u)_C\otimes \rho(u)_D.
\]

Using the $\otimes$-structures of $f^*$, $a$ and $b$, this amounts to the equality
\[
b(\epsilon_C\otimes \epsilon_D)\circ u_{f_*C\otimes f_*D} \circ a(\sigma_C\otimes \sigma_D)=
b(\epsilon_{C\otimes D})\circ u_{f_*(C\otimes D)}\circ a(\sigma_{C\otimes D}).
\]

For $C,D\in \sA'$, the morphism
\[f^*(f_*C\otimes f_*D)\iso f^*f_*C\otimes f^*f_*D\by{\epsilon_C\otimes \epsilon_D} C\otimes D,\]
where the first map is the inverse of the (strong) monoidal structure on $f^*$, yields by adjunction a morphism
\begin{equation}\label{eq6.1}
f_*C\otimes f_*D\to f_*(C\otimes D)
\end{equation}
(lax monoidal structure on $f_*$). This yields a lax monoidal structure $\mu$ rendering the diagram
\[\xymatrix{
f^*f_*C\otimes f^*f_* D\ar[dr]_{\epsilon_C\otimes \epsilon_D}\ar[rr]^\mu&& f^*f_*(C\otimes D)\ar[dl]^{\epsilon_{C\otimes D}}\\
&C\otimes D
}\]
commutative. Using the isomorphisms \eqref{eq5.8}, this translates to the following diagram:
\[\xymatrix{
\bigoplus\limits_{g,h}g^*C\otimes h^*D\ar[dr]_{\epsilon_C\otimes \epsilon_D}\ar[rr]^\mu&& \bigoplus\limits_g g^*C\otimes g^*D\ar[dl]^{\epsilon_{C\otimes D}}\\
&C\otimes D
}\]
where $\mu$ identifies to the obvious projection. We have a dual commutative diagram
\[\xymatrix{
\bigoplus\limits_{g,h}g^*C\otimes h^*D&& \bigoplus\limits_g g^*C\otimes g^*D\ar[ll]_\lambda\\
&C\otimes D\ar[ul]^{\sigma_C\otimes \sigma_D}\ar[ur]_{\sigma_{C\otimes D}}
}\]
where $\lambda$ is the obvious inclusion. Therefore, it suffices to prove the identity
\[
b(\mu)\circ u_{f_*C\otimes f_*D} \circ a(\lambda)=
 u_{f_*(C\otimes D)}
\]
which follows from the naturality of $u$ and the identity $\mu\lambda=1$.

The last point follows from Lemma \ref{l1.1} b), the weak projection formula and Lemma \ref{l2.2}.
\end{proof}

\begin{rk} If $u$ is unital, $\rho(u)$ is not necessarily unital. 
In the situation of Example \ref{ex2.1}, $f_*\un=K[\Gamma]$ with its natural left action by $G$ and $u$ identifies with an element of $G(K)$, while $\epsilon$ sends $\sum_{g\in \Gamma} \lambda_g [g]$ to $\lambda_1$ and $\sigma(1) =[1]$. Thus $\rho(u)_\un$ is $1$ if $u\in H(K)$ and $0$ otherwise. It follows that $\rho(u)=0$ in the latter case.
\end{rk}

 \subsection{Monoidal codescent}\label{s2.3} This is the pendant of Subsection \ref{s1.2}. Here we simply remark that, if $f^*$ verifies the weak projection formula., the monad $M$ of § \ref{s1.2} is 
\[MA = f_*\un \otimes A\]
and $\sA^M$ is the category of modules in $\sA$ over the monoid $f_*\un$ \cite[VII.4]{mcl}, see Lemma \ref{lA.2}. This monoid is commutative because the monoidal structures are symmetric.

\subsection{Artin objects}

\begin{defn}\label{l5.1a} An \emph{Artin object for $f^*$} is an object $A\in \sA$ such that $f^*A\simeq n\un$ for some $n\ge 0$. Artin objects form a (full) rigid $\otimes$-subcategory of $\sA$, denoted by $\sA^0(f^*)$.
\end{defn}

Let $A\in \sA^0(f^*)$ be an Artin object. Then $\Gamma$ acts on $\Hom(\un,f^*A)\simeq \Hom(f^*\un,f^*A)$ as in §\ref{s1.1}. This defines a $Z$-linear $\otimes$-functor 
\begin{align}
\sA^0(f^*)&\to \Rep_Z(\Gamma)\label{eq5.1}\\
A&\mapsto \sA'(\un,f^*A)\notag
\end{align}
where $Z=\End_{\sA(S)}(\un)=\End_{\sA^0(f^*)}(\un)$, and the right hand side is the category of representations of $\Gamma$ on free finitely generated $Z$-modules.

\begin{lemma}\label{l5.2}  Under the hypotheses of Corollary \ref{c2.1}, this functor is an equivalence of categories.
\end{lemma}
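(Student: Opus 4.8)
The plan is to deduce the equivalence directly from the descent property. By Corollary~\ref{c2.1} the functor $f^*$ has descent, so $\hat f^*\colon\sA\iso\sA'[\Gamma]$ is an equivalence of categories. I would first restrict this equivalence to Artin objects: since $A$ is Artin exactly when $f^*A\cong n\un$, the subcategory $\sA^0(f^*)$ is identified with the full subcategory $\sA'[\Gamma]^0$ of $\sA'[\Gamma]$ consisting of those descent data $(C,(b_g))$ with $C\cong n\un$ for some $n\ge 0$. Under this identification the functor \eqref{eq5.1} becomes $(C,(b_g))\mapsto \sA'(\un,C)$ equipped with the action coming from the $b_g$, so everything reduces to analysing descent data carried by finite sums of the unit object.

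Next I would linearise the subcategory $\langle\un\rangle\subset\sA'$ of objects isomorphic to some $n\un$. Writing $R=\End_{\sA'}(\un)$, the additive functor $V:=\sA'(\un,-)$ sends $n\un$ to $R^n$ and identifies $\sA'(m\un,n\un)$ with the $n\times m$ matrices over $R$; hence $V$ is an equivalence of $\langle\un\rangle$ with the category of free finitely generated $R$-modules. Transporting a descent datum through $V$ turns it into a twisted $\Gamma$-action on a free $R$-module, which is the raw material of an object of $\Rep_Z(\Gamma)$.

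The heart of the matter — and the step I expect to be the main obstacle — is the precise comparison of the $\Gamma$-action of §\ref{s1.1} with an honest linear representation. By (1G) one has $Z=\End_\sA(\un)=R^\Gamma$, and the action on $V(f^*A)=\sA'(\un,f^*A)$ is assembled from the isomorphisms $i_g(A)$ and $i_g(\un)$ of §\ref{s1.1}. The key verification is that the contribution of $i_g(\un)$, i.e. the way $g^*$ twists the ring $R=\End_{\sA'}(\un)$, is trivial, so that $\sA'(\un,\un)=Z$ and the twisted descent data over $R$ become genuine $Z$-linear representations. The bookkeeping here is governed by Lemma~\ref{l5.4}, which says that $f^*[g]$ permutes the summands of $f^*f_*C\cong\bigoplus_g g^*C$ by right translation, exactly the regular-representation pattern. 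One then checks that the cocycle condition $b_h\circ h^*b_g=b_{gh}\circ c_{g,h}$ on a descent datum supported on $n\un$ reduces to the relation making $g\mapsto b_g^{-1}$ a homomorphism $\Gamma\to\GL_n(Z)$, and conversely that every such homomorphism yields a descent datum.

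Granting this comparison, the two halves of the equivalence are formal. For full faithfulness, (1G) gives $\sA(A,B)\cong\sA'(f^*A,f^*B)^\Gamma$; the matrix description of $\sA'(f^*A,f^*B)$ furnished by $V$ rewrites the right-hand side as the $\Gamma$-equivariant $Z$-homomorphisms, that is $\Hom_{\Rep_Z(\Gamma)}(V(f^*A),V(f^*B))$. For essential surjectivity, starting from a representation of $\Gamma$ on a free $Z$-module of rank $n$ I would build the descent datum $(n\un,(b_g))$ on $n\un$ whose isomorphisms $b_g$ are given by the representation matrices — the representation axiom being precisely the cocycle condition — and then invoke effectivity of descent data (condition (2G), valid by Corollary~\ref{c2.1}) to realise it as $f^*A$ for an Artin object $A$ with $V(f^*A)$ isomorphic to the given representation. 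This yields the desired equivalence $\sA^0(f^*)\simeq\Rep_Z(\Gamma)$.
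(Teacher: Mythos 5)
Your proof follows the paper's argument exactly: factor \eqref{eq5.1} as $\hat f^*$ (an equivalence onto $\sA'[\Gamma]^0$ by Corollary \ref{c2.1}, restricted to Artin objects) followed by $V=\sA'(\un,-)$ (an equivalence by the matrix description of morphisms between finite sums of $\un$). The extra care you devote to the possible $\Gamma$-twisting of $\End_{\sA'}(\un)$ addresses a point the paper disposes of with ``by definition of $Z$'', but it does not change the structure of the argument.
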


\begin{proof} Let $\sA'[\Gamma]^0$ be the full subcategory of $\sA'[\Gamma]$ consisting of those objects $(C,(b_g))$ such that $C$ is isomorphic to $n\un$ for some $n\ge 0$. Then \eqref{eq5.1} factors as a composition
\[\sA^0(f^*)\by{\hat{f}^*} \sA'[\Gamma]^0\by{V} \Rep_Z(\Gamma)\]
with $V(C)=\sA'(\un,C)$ as before. By definition of $Z$, $V$ is an equivalence of categories and so is $\hat{f}^*$ by Corollary \ref{c2.1}.
\end{proof}

\subsection{An exactness result} We go back to the situation of \S \ref{s2.2}. For $u\in \Hom^\otimes(af^*,bf^*)$, we write $u_{|\sA^0(f^*)}=1$ if $u_A:af^*A\to bf^*A$ is the identity for any $A\in \sA^0(f^*)$ modulo the isomorphisms
\[af^*A\simeq a(n\un) \simeq n\un_\sB, \quad  bf^*A\simeq b(n\un) \simeq n\un_\sB.\]

\begin{thm}\label{t2.1} Let $\Hom^{\otimes,\un}(a,b)$ be the subset of $\Hom^\otimes(a,b)$ formed of unital $\otimes$-natural transformations. Suppose that $f^*$ verifies the weak projection formula. Then $u\in \break\Hom^\otimes(af^*,bf^*)$ is of the form $f_!v$ for a (unique) $v\in \Hom^{\otimes,\un}(a,b)$ if and only $u_{|\sA^0(f^*)}=1$.
\end{thm}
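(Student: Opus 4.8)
The plan is to take $v=\rho(u)$ as the only possible candidate and verify it has the required properties, where $\rho$ is the retraction of Lemma \ref{l1.1} b). Uniqueness is then immediate: if $u=f_!v$, applying $\rho$ and using that it is a retraction of $f_!$ gives $v=\rho(f_!v)=\rho(u)$.

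I would first dispose of the ``only if'' direction, which needs no serious input. Suppose $u=f_!v$ with $v\in\Hom^{\otimes,\un}(a,b)$. For an Artin object $A$ with $f^*A\simeq n\un$, the naturality of $v$ along this isomorphism gives $u_A=v_{f^*A}\simeq v_{n\un}$; since $v$ is a \emph{unital} natural transformation between additive functors, $v_{n\un}$ is the identity under the biproduct decomposition of $n\un$, whence $u_{|\sA^0(f^*)}=1$.

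The substance is the converse. The key observation is that $f_*\un$ is \emph{itself} an Artin object (Definition \ref{l5.1a}): weak Cartesianity, a consequence of Cartesianity, identifies $f^*f_*\un$ with $\bigoplus_{g\in\Gamma}\un$ via \eqref{eq5.8a}, so $f^*(f_*\un)\simeq|\Gamma|\,\un$. Hence the hypothesis $u_{|\sA^0(f^*)}=1$ forces both $u_\un$ and $u_{f_*\un}$ to be identities under the canonical identifications. I set $v=\rho(u)$, which is already $\otimes$-natural by Proposition \ref{p2.2}, and I would check two things. For unitality, I compute $v_\un=b(\epsilon_\un)\,u_{f_*\un}\,a(\sigma_\un)$; under the identification \eqref{eq5.8}, Remark \ref{r1.1} shows $\epsilon_\un$ is the projection onto the $g=1$ factor and \eqref{eq1.4} shows $\sigma_\un$ is the corresponding inclusion, so with $u_{f_*\un}=1$ this collapses to an inclusion followed by its projection, namely $1_{\un_\sB}$. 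For membership in the image of $f_!$, Proposition \ref{p2.2} reduces the task (via the weak projection formula) to checking that $u$ commutes with $\epsilon_{f^*\un}$; the relevant naturality square has horizontal maps $u_{f_*\un}$ and $u_\un$ and vertical maps $a(\epsilon_\un),b(\epsilon_\un)$, and under the same identifications it becomes the evidently commuting square built from two identity maps and two copies of the projection onto the $g=1$ summand. Proposition \ref{p2.2} then yields $u=f_!\rho(u)=f_!v$ with $v$ both unital and $\otimes$-natural, i.e. $v\in\Hom^{\otimes,\un}(a,b)$.

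The main obstacle is purely one of bookkeeping: everything hinges on correctly tracking $\epsilon_\un$ and $\sigma_\un$ through the isomorphism \eqref{eq5.8a} as, respectively, the projection onto and the inclusion of the $g=1$ summand, and on the recognition that $f_*\un$ is an Artin object so that the scalar hypothesis $u_{|\sA^0(f^*)}=1$ may be applied to it. Once these two identifications are in place, both the unitality of $\rho(u)$ and the commutation with $\epsilon_{f^*\un}$ are formal.
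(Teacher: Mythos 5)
Your proposal is correct and follows essentially the same route as the paper: uniqueness via the retraction $\rho$ of Lemma \ref{l1.1} b), and sufficiency by observing that $f_*\un\in\sA^0(f^*)$ (via \eqref{eq5.8}) so that the hypothesis applies to $u_{f_*\un}$, after which Proposition \ref{p2.2} gives membership in the image of $f_!$. The only difference is one of exposition: you spell out the identification of $\epsilon_\un$ and $\sigma_\un$ with the $g=1$ projection and inclusion, and you explicitly verify the unitality of $\rho(u)$, which the paper leaves implicit.
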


\begin{proof} Uniqueness follows from the existence of the retraction $\rho$ of Lemma \ref{l1.1} b). 
The condition is obviously necessary, and its sufficiency follows from  Proposition \ref{p2.2} plus the hypothesis on $u$, since $f_*\un\in \sA^0(f^*)$ by the isomorphism  \eqref{eq5.8}.  
\end{proof}

\section{Morphisms of stacks}\label{s3}

\subsection{A trivial lemma} The following is obvious:

\begin{lemma}\label{l4.1} Let $F:\usA\to \usB$ be a morphism of stacks over a site. If $F$ is faithful (resp. fully faithful, an equivalence of categories) locally, it is so globally.\qed
\end{lemma}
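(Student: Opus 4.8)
The plan is to treat the three assertions in the order faithful, then fully faithful, then equivalence, feeding the property on the overlaps obtained at each stage into the proof of the next. Throughout I write $(-)|_U$ for the restriction functor $g^*$ attached to a morphism $g\colon U\to S$ of the site, and I use that a morphism of stacks $F$ commutes with these restrictions up to coherent natural isomorphism. I read ``locally'' as: every object admits a cover on which the relevant property of $F$ holds.

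First, faithfulness. Fix $A,B\in\usA(S)$ and $\phi,\psi\colon A\to B$ with $F_S(\phi)=F_S(\psi)$, and choose a cover $\{U_i\to S\}$ on which $F$ is faithful. By stack axiom (1) for $\usA$, the presheaf $U\mapsto\usA(U)(A|_U,B|_U)$ is separated, so it suffices to check $\phi|_{U_i}=\psi|_{U_i}$. Applying the comparison isomorphism of $F$ gives $F_{U_i}(\phi|_{U_i})=F_S(\phi)|_{U_i}=F_S(\psi)|_{U_i}=F_{U_i}(\psi|_{U_i})$, whence equality by faithfulness of $F_{U_i}$. This argument runs at every object, so $F$ is faithful globally, in particular on all fibre products $U_i\times_S U_j$, which is what the next two steps consume.

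Next, fullness (which, with the first step, yields full faithfulness). Given $g\colon F_S(A)\to F_S(B)$, restrict to a cover $\{U_i\}$ on which $F$ is full and lift $g|_{U_i}$ to $\phi_i\colon A|_{U_i}\to B|_{U_i}$ with $F_{U_i}(\phi_i)=g|_{U_i}$. On $U_i\times_S U_j$ the two restrictions $\phi_i|$, $\phi_j|$ have equal image under $F$, hence agree by the faithfulness already established there; thus $(\phi_i)$ is a matching family for the sheaf of axiom (1) and glues to $\phi\colon A\to B$. Separatedness of the Hom-sheaf on the $\usB$ side then forces $F_S(\phi)=g$.

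Finally, the equivalence case: full faithfulness is in hand, so only essential surjectivity remains. Given $B\in\usB(S)$, choose a cover on which $F$ is an equivalence together with $A_i\in\usA(U_i)$ and isomorphisms $\beta_i\colon F(A_i)\iso B|_{U_i}$. On $U_{ij}:=U_i\times_S U_j$ the composite $\beta_j^{-1}\beta_i$ is an isomorphism $F(A_i|_{U_{ij}})\iso F(A_j|_{U_{ij}})$, which by full faithfulness lifts uniquely to $\alpha_{ij}\colon A_i|_{U_{ij}}\iso A_j|_{U_{ij}}$; faithfulness on triple overlaps forces the cocycle identity, so $(A_i,\alpha_{ij})$ is a descent datum. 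The hard part, and the only place the second stack axiom intervenes, is here: effective descent for $\usA$ produces $A\in\usA(S)$ with $A|_{U_i}\cong A_i$, and one must verify that the $\beta_i$, compatible with the descent data by construction, glue (via separatedness of the Hom-sheaf for $\usB$) to an isomorphism $F_S(A)\iso B$. Granting the routine coherence bookkeeping for these gluings, $F_S$ is essentially surjective, completing the proof.
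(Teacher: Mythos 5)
Your proof is correct and is exactly the standard descent argument the paper has in mind: the paper offers no proof at all, simply declaring the lemma obvious, and your three-stage bootstrap (faithful on overlaps feeding fullness, full faithfulness feeding the lifting of the gluing isomorphisms to a descent datum) is the canonical way to fill it in. One tiny wording slip: in the last step the $\beta_i$ glue by the \emph{existence} half of the sheaf axiom for $\usB$, not by separatedness alone (separatedness only gives uniqueness and the check that the glued map composed with the glued inverse is the identity), but this does not affect the argument.
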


\subsection{Universal extension}\label{s3.1} 
Let $\sA,\sA',f^*$ be as in Section \ref{s.mon}. Let $\sB$ be a $\otimes$-category and $\gamma:\sA\to \sB$ be a $\otimes$-functor. We are going to do a reverse construction to that of \S \ref{s1.2}. 

Recall from \S \ref{s2.3} that \eqref{eq6.1} provides $f_*\un$ with a commutative monoid structure. Then $R=\gamma(f_*\un )$ is a commutative monoid of $\sB$, and $\gamma$ induces a functor
\[\sA^{f_*\un}\to \sB^R=:\sB'\]
hence a functor
\begin{equation}\label{eq3.1}
\gamma':\sA'\to \sB'
\end{equation}
obtained by composing with the comparison functor $K$ of \eqref{eq1.1}: explicitly,
\begin{equation}\label{eq3.3}
\gamma' C=(\gamma f_*C,\gamma f_*\epsilon_C).
\end{equation}

It comes with a naturally commutative diagram \eqref{eq2.5a} in which $f^\sB_*$ is the forgetful functor.

Recall that $f^\sB_*$ has the left adjoint $f_\sB^*:X\mapsto (R\otimes X,\mu\otimes 1_X)$ where $\mu$ is the multiplication of $R$: this is a special case of \cite[VI.2, Th. 1]{mcl}. Therefore we get a base change morphism 
\begin{equation}\label{eq2.3}
f_\sB^*\gamma\Rightarrow \gamma' f^*
\end{equation}
fitting in Diagram \eqref{eq2.5} (so far it is not necessarily invertible).

Suppose that $\sB$ has coequalisers (\emph{e.g} that it is abelian). Since $f_*\un$ is commutative, so is $R$; by Proposition \ref{pA.1}, $\sB'$ acquires a $\otimes$-structure with unit $R$, and $f_\sB^*$ is a $\otimes$-functor.

The action of $\Gamma$ on $f_*\un$ (\S \ref{s1.0}) carries over to $R$ via $\gamma$ and defines a pseudo-action of $\Gamma$ on $\sB'$ such that $\gamma'$ is $\Gamma$-equivariant. In particular, the category of descent data $\sB'[\Gamma]$ is defined (see \S \ref{s1.3}).

\begin{prop}\label{p4.2} Assume that $f^*$  
verifies the weak projection formula. Then\\
a) The natural transformation \eqref{eq2.3} is invertible.\\
b) If moreover $f^*$ is Cartesian and $\sB$ has cokernels,  $\gamma'$ is a $\otimes$-functor.\\
c) If moreover $\sA$ is $\Z[1/|\Gamma|]$-linear and $\gamma'$ is dense, $f_\sB^*$ has descent.
\end{prop}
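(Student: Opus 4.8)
The plan is to deduce descent for $f_\sB^*$ from the converse half of Corollary~\ref{c2.1}: it suffices to check that $\sB$ is pseudo-abelian and $\Z[1/|\Gamma|]$-linear and that $f_\sB^*$ verifies the weak projection formula, is Cartesian, and carries a weak trace structure. That $\sB$ is pseudo-abelian is automatic once it has cokernels, since then the image of an idempotent $e$ is $\Coker(1-e)$; and $\sB$ is $\Z[1/|\Gamma|]$-linear because $\gamma$ is a $\otimes$-functor, so $|\Gamma|\cdot 1_{\un_\sB}=\gamma(|\Gamma|\cdot 1_{\un_\sA})$ is invertible in $\End(\un_\sB)$ and hence acts invertibly on all of $\sB$.

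Three of the four remaining points are soft. The weak projection formula holds because $f^\sB_*f_\sB^*=R\otimes(-)$ and $f^\sB_*\un_{\sB'}=R$, so $w_X$ is the symmetry $X\otimes R\iso R\otimes X$. For density, note that $f^*$ is Cartesian, hence dense by Lemma~\ref{l1.1}~a), so every object of $\sA'$ is a retract of some $f^*A$; as $\gamma'$ is dense, every object of $\sB'$ is a retract of some $\gamma'C$, and therefore of $\gamma'f^*A\simeq f_\sB^*\gamma A$ by part a). Thus $f_\sB^*$ is dense. Weak cartesianity is obtained by transport: applying the additive, unital, $\Gamma$-equivariant functor $\gamma'$ to the isomorphism \eqref{eq5.8} of $f^*$ at $\un$ and using part a) to identify $\gamma'f^*f_*\un\simeq f_\sB^*f^\sB_*\un_{\sB'}$, one checks that the structural map \eqref{eq5.8a} for $f_\sB^*$ at $\un_{\sB'}$ is invertible. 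Lemma~\ref{l2.1} then upgrades this to full cartesianity of $f_\sB^*$.

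The crux is the weak trace structure, i.e. a morphism $\tr\colon R\to\un_\sB$ satisfying (1u) and (2u). Here the key observation is that, $f^*$ being Cartesian, $f^*f_*\un\simeq\bigoplus_{\Gamma}\un$, so $f_*\un$ is an Artin object for $f^*$ and is therefore dualizable by Definition~\ref{l5.1a}; since $\gamma$ is a $\otimes$-functor, $R=\gamma(f_*\un)$ is a dualizable commutative monoid in $\sB$. I would take for $\tr$ the algebra trace $x\mapsto\operatorname{tr}(m_x)$, the categorical trace of multiplication by $x$, which exists precisely because $R$ is dualizable. Condition (1u) reads $\tr\circ\eta=\operatorname{tr}(1_R)=\dim_\sB R=|\Gamma|$, the last equality coming from $\dim_\sB R=\gamma(\dim_\sA f_*\un)$ and the rank $|\Gamma|$ of the Artin object $f_*\un$. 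Condition (2u) follows because transporting the splitting $f^*f_*\un\simeq\prod_\Gamma\un_{\sA'}$ of the Artin object (with $\Gamma$ permuting the factors, by Lemma~\ref{l5.4}) through $\gamma'$ exhibits $f_\sB^*R$ as the split algebra $\prod_\Gamma\un_{\sB'}$, on which the algebra trace is the sum map. Proposition~\ref{p2.1} then promotes this weak trace to a trace structure, and Corollary~\ref{c2.1} gives descent.

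The main obstacle is genuinely this construction of $\tr$. Under the hypotheses $\sA$ need not be pseudo-abelian, so $f^*$ itself may admit no trace structure---the averaging idempotent $\tfrac1{|\Gamma|}\sum_g[g]$ on $f_*\un$ need have no image in $\sA$---and the trace cannot simply be pulled back from $\sA$. What makes it available over $\sB$ is exactly the combination of pseudo-abelianity of $\sB$, the dualizability of the Artin object $R$, and the invertibility of $|\Gamma|=\dim_\sB R$; concretely these force $\un_\sB$ to be a retract of $R$ (the image of the averaging idempotent on $R$ being $\un_\sB$), which is the content that (1u)--(2u) repackage. Verifying that the transported isomorphisms are the structural ones, and that the $\tr$ so produced really satisfies (1u)--(2u), is the one place where care is required.
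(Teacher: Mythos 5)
Your treatment of part c) tracks the paper's proof closely for everything except the trace: the paper likewise notes that $\sB$ is $\Z[1/|\Gamma|]$-linear and pseudo-abelian (cokernels split idempotents), that the weak projection formula for $f_\sB^*$ is a tautology, that weak cartesianity is transported from $f^*$, and that density of $f_\sB^*$ follows from density of $f^*$ and of $\gamma'$, whence cartesianity by Lemma \ref{l2.1} and descent by Corollary \ref{c2.1}. But you prove only part c). Parts a) and b) are not addressed at all, and b) is not a formality: the $\otimes$-structure on $\gamma'$ is constructed in the paper from the $0$-sequence $f_*C\otimes f_*\un\otimes f_*D\to f_*C\otimes f_*D\to f_*(C\otimes D)$, shown to be an isomorphism on objects $f^*A\otimes f^*B$ via a) and then in general via density (Lemma \ref{l1.1} a)); your transport arguments in c) (and your use of ``part a)'') silently rely on this.

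The genuine gap is the weak trace structure. The paper obtains it by applying $\gamma$ to the trace $\tr\colon f_*\un\to\un$ of $f^*$ (``the second follows from the same property for $f_*\un$''); in the paper's intended context $f^*$ carries such a trace (in the applications it even has descent), and a weak trace structure is merely a morphism satisfying (1u)--(2u) --- it does not require $\sA$ to be pseudo-abelian, so your stated reason for refusing to transport it conflates a morphism-level datum with idempotent splitting. Your substitute construction does not close under the hypotheses you allow yourself: (i) dualizability of $f_*\un$ is not available --- the rigidity assertion in Definition \ref{l5.1a} is only justified, via Lemma \ref{l5.2}, under the hypotheses of Corollary \ref{c2.1}, i.e.\ precisely when $f^*$ already has descent and hence a trace, so the appeal is circular in the very situation you are trying to handle; (ii) the identity $\dim_\sB R=|\Gamma|$ in $\End(\un_\sB)$ is deduced from $\dim_{\sA'}f^*f_*\un=|\Gamma|$, which needs $f^*$ (or $f_\sB^*$) to be faithful on $\End(\un)$ --- and in the paper that faithfulness is itself a consequence of the trace structure (Proposition \ref{p5.3a} b)). Either assume, as the paper implicitly does, that $f^*$ has a weak trace structure and push it forward by $\gamma$, or supply an independent argument for (i) and (ii); as written, the crux of your proof rests on unavailable inputs.
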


\begin{proof} a) After composition with $f^\sB_*$, the value of \eqref{eq2.3} on $A\in \sA$ becomes
\[R\otimes \gamma(A)\to \gamma(f_*f^* A)\]
which, via the weak projection formula, is the strong monoidality isomorphism of $\gamma$; since $f^\sB_*$ is conservative (Lemma \ref{l1.2}), we are done.

 b) We first provide $\gamma'$ with an (a priori lax) symmetric monoidal structure. Let $C,D\in \sA'$. The lax monoidal structure \eqref{eq6.1} yields a $0$-sequence
 \[f_*C \otimes f_*\un\otimes f_*D\to f_*C\otimes f_* D\to f_*(C\otimes D)\]
 where the first map is the difference of the $f_*\un$ actions on $f_*C$ and $f_* D$. Applying $\gamma$ and using its strong monoidality, 
 we get another $0$-sequence
 \[\gamma f_*  C \otimes R\otimes \gamma f_*  D\to \gamma f_*  C\otimes \gamma f_*   D\to \gamma f_*  (C\otimes D)\]
  which induces the desired natural transformation (compare \eqref{eq3.3} and \eqref{eq7.2}):
 \begin{equation}\label{eq2.4}
 \gamma' C\otimes \gamma' D\to \gamma'(C\otimes D).
 \end{equation}
 
 By a) and the strong monoidality of $f_\sB^*$, $\gamma'\circ f^*$ is strongly monoidal: in other terms, \eqref{eq2.4} is an isomorphism when $C$ and $D$ are of the form $f^*A$ and $f^*B$, hence in general by Lemma \ref{l1.1} a).

c) If $\sA$ is $\Z[1/|\Gamma|]$-linear, so is $\sB$; it is also pseudo-abelian since it has cokernels. By Corollary \ref{c2.1}, it suffices to see that $f_\sB^*$ verifies the weak projection formula, has a weak trace structure and  is Cartesian. The first fact is a tautology, the second follows from the same property for $f_*\un$, as does the  weak cartesianity of $f_\sB^*$. But since $\gamma'$ and $f^*$ are dense, so is their composition and thus so is $f_\sB^*$ as well; hence $f_\sB^*$ is Cartesian by Lemma \ref{l2.1}.
 \end{proof}

\begin{rk} The density hypothesis on $\gamma'$ in c) seems artificial, even though it is easy to verify in practice. I don't know how to avoid it.
\end{rk}

We now have a going-down and going-up theorem:

\begin{thm}\label{t4.1} Under all the hypotheses of Proposition \ref{p4.2}, (i.e. assuming that $\sA$ is $\Z[1/|\Gamma|]$-linear, that $f^*$ verifies the weak projection formula and is Cartesian, and that $\gamma'$ is dense),  $\gamma$ is fully faithful if and only if $\gamma'$ is fully faithful.
\end{thm}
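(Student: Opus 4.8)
The plan is to prove the two implications by different means: full faithfulness ``goes up'' from $\gamma$ to $\gamma'$ through the monadic description of $\sA'$ furnished by Proposition \ref{p1.2}, and ``goes down'' from $\gamma'$ to $\gamma$ through Galois descent for $f^*$ and $f^*_\sB$.

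\emph{Going up} ($\gamma$ fully faithful $\Rightarrow\gamma'$ fully faithful). Since $f^*$ verifies the weak projection formula, the monad $M$ of \S\ref{s1.2} is $MA=f_*\un\otimes A$ (\S\ref{s2.3}), so $\sA^{M}=\sA^{f_*\un}$ is the category of $f_*\un$-modules, while $\sB'=\sB^{R}$ is that of $R$-modules for $R=\gamma(f_*\un)$. As $f^*$ is Cartesian and $\sA'$ is pseudo-abelian, Proposition \ref{p1.2} makes the comparison functor $K\colon\sA'\iso\sA^{f_*\un}$ an isomorphism, and by \eqref{eq3.3} we have $\gamma'=\gamma^{\mathrm{mod}}\circ K$, where $\gamma^{\mathrm{mod}}\colon\sA^{f_*\un}\to\sB^{R}$ is induced by $\gamma$ (its strong monoidality carrying an $f_*\un$-action to an $R$-action). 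It thus suffices to see that $\gamma^{\mathrm{mod}}$ is fully faithful. A morphism of $f_*\un$-modules $(P,\rho_P)\to(Q,\rho_Q)$ is a $\phi\in\sA(P,Q)$ equalising the two evident composites $f_*\un\otimes P\rightrightarrows Q$, and $\gamma^{\mathrm{mod}}$ sends it to $\gamma\phi$. Faithfulness of $\gamma$ gives injectivity; for surjectivity, an $R$-module map $\gamma P\to\gamma Q$ is $\gamma\phi$ for a unique $\phi$ by fullness, and applying faithfulness of $\gamma$ to the two composites $\gamma(f_*\un\otimes P)\cong R\otimes\gamma P\rightrightarrows\gamma Q$ shows that $\phi$ respects the actions. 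Hence $\gamma^{\mathrm{mod}}$, and therefore $\gamma'$, is fully faithful. This is the only place Proposition \ref{p1.2} is used, as announced in the remark following it.

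\emph{Going down} ($\gamma'$ fully faithful $\Rightarrow\gamma$ fully faithful). Here I invoke that $f^*$ has descent (Corollary \ref{c2.1}, whose hypotheses --- pseudo-abelianness and the weak trace structure of $f^*$, used already in the proof of Proposition \ref{p4.2}(c) --- are in force) and that $f^*_\sB$ has descent (Proposition \ref{p4.2}(c)). Condition (1G) then provides natural isomorphisms $\sA(A,B)\iso\sA'(f^*A,f^*B)^\Gamma$ and $\sB(\gamma A,\gamma B)\iso\sB'(f^*_\sB\gamma A,f^*_\sB\gamma B)^\Gamma$. The base-change morphism \eqref{eq2.3}, invertible by Proposition \ref{p4.2}(a), identifies $f^*_\sB\gamma A$ with $\gamma'f^*A$ compatibly with the $\Gamma$-actions, and, $\gamma'$ being $\Gamma$-equivariant, its full faithfulness yields $\sA'(f^*A,f^*B)^\Gamma\iso\sB'(\gamma'f^*A,\gamma'f^*B)^\Gamma$ on passing to invariants. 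Composing the three isomorphisms and chasing the definitions, the resulting bijection $\sA(A,B)\iso\sB(\gamma A,\gamma B)$ is exactly $\phi\mapsto\gamma\phi$; hence $\gamma$ is fully faithful.

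\emph{Main obstacle.} The crux is the going-down step, and within it the availability of Galois descent for $f^*$ --- equivalently, the existence of the retraction of $\eta_{\un}\colon\un\to f_*\un$ provided by the weak trace structure together with the invertibility of $|\Gamma|$ in the $\Z[1/|\Gamma|]$-linear category $\sA$. This is precisely the ingredient that fails outside the $\Q$-linear world (cf.\ the N\'eron--Severi example of the introduction), and it is what makes ``going down'' strictly deeper than ``going up''. The remaining verifications --- naturality of the monoidal identifications $\gamma(f_*\un\otimes P)\cong R\otimes\gamma P$ in the going-up step, and the compatibility of \eqref{eq2.3} with the $\Gamma$-actions in the going-down step --- are routine given the strong monoidality and $\Gamma$-equivariance of $\gamma$ and $\gamma'$.
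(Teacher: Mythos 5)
Your proof is correct and follows essentially the same route as the paper: the ``going up'' direction is exactly the paper's argument (full faithfulness of $\gamma$ passes to the module categories $\sA^{f_*\un}\to\sB^R$, and Proposition \ref{p1.2} identifies $\sA'$ with $\sA^{f_*\un}$), while your ``going down'' direction simply inlines the proof of Lemma \ref{l4.1} (a locally fully faithful morphism of stacks is globally fully faithful) using condition (1G) for $f^*$ and $f^*_\sB$, the latter's descent being supplied by Proposition \ref{p4.2}(c) as in the paper.
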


\begin{proof} ``If'' follows from Lemma \ref{l4.1} and Proposition \ref{p4.2}. For ``only if'', the full faithfulness of $\gamma$ implies that of $\sA^{f_*\un}\to \sB^R$. The conclusion then follows from Proposition \ref{p1.2}.
\end{proof}

\subsection{``Universal'' property of the universal extension}

\begin{prop}\label{p2.3} Consider Diagram \eqref{eq2.5}. Let $\sC$ be a $\otimes$-category and let $a:\sA'\to \sC$, $b:\sB\to \sC$ be two $\otimes$-functors, provided with a natural $\otimes$-transformation $v:b\gamma\Rightarrow af^*$. Suppose that, as in Theorem \ref{t4.1}, all the hypotheses of Proposition \ref{p4.2} are verified and that, moreover, $\sC$ has cokernels. Then there exists a unique $\otimes$-functor $b':\sB'\to \sC$ such that $b=b'f_\sB^*$; it comes with a canonical natural $\otimes$-transformation $u:b' \gamma'\Rightarrow a$.
\end{prop}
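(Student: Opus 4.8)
The plan is to realise $b'$ as an extension-of-scalars (base-change) functor, the requisite augmentation being manufactured from the datum $v$.

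First I would fix the structure at hand. By §\ref{s2.3} and Proposition \ref{pA.1}, $\sB'=\sB^R$ is the category of modules in $\sB$ over the commutative monoid $R=\gamma(f_*\un)$, equipped with the relative tensor product $\otimes_R$ and unit $R$, and $f_\sB^*\colon X\mapsto(R\otimes X,\mu\otimes 1)$ is the free-module functor, which is strong monoidal with $f_\sB^*\un=R=\un_{\sB'}$. Under the present hypotheses Proposition \ref{p4.2} c) says that $f_\sB^*$ has descent; hence it is Cartesian (Proposition \ref{p5.3a} a)) and therefore dense (Lemma \ref{l1.1} a)). Density is the lever that will let me check properties of $b'$ on free modules and propagate them to all of $\sB'$.

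Next I would produce the augmentation. Since $b$ is a $\otimes$-functor, $b(R)$ is a commutative monoid of $\sC$ and $b$ sends $R$-modules to $b(R)$-modules. Set
\[\varepsilon\colon b(R)=b\gamma(f_*\un)\by{v_{f_*\un}} af^*(f_*\un)=a(f^*f_*\un)\by{a(\epsilon_\un)} a(\un)=\un_\sC,\]
the last identification using that $a$ is a $\otimes$-functor. I would check that $\varepsilon$ is a homomorphism of monoids: this uses that $v$ is a $\otimes$-transformation, that the monoid structure of $f_*\un$ is the one of \eqref{eq6.1}, and that $\epsilon_\un$ is the projection onto the $g=1$ factor in \eqref{eq5.8a} (Remark \ref{r1.1}), which is a monoid map for the pointwise structure that \eqref{eq5.8} puts on $f^*f_*\un\simeq\prod_{g\in\Gamma}\un$. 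With $\varepsilon$ in hand I define $b'$ as $Y\mapsto b(Y)$ (with its induced $b(R)$-module structure) followed by base change $\un_\sC\otimes_{b(R)}(-)$ along $\varepsilon$; concretely, for $(Y,\phi)\in\sB'$,
\[b'(Y,\phi)=\Coker\Big(b(R)\otimes b(Y)\by{\,b(\phi)-(\varepsilon\otimes 1)\,} b(Y)\Big),\]
which exists because $\sC$ has cokernels (using the isomorphism $b(R)\otimes b(Y)\iso b(R\otimes Y)$ and the unit constraint $\un_\sC\otimes b(Y)\iso b(Y)$). Functoriality of $b'$ follows from the universal property of cokernels. Then I verify: (a) $b=b'f_\sB^*$, because for a free module $b(f_\sB^*X)\iso b(R)\otimes b(X)$ is free over $b(R)$ and base change along $\varepsilon$ returns $b(X)$ naturally; (b) $b'$ is strong monoidal, since base change along a monoid homomorphism is strong symmetric monoidal and $b'$ agrees with the strong monoidal $b$ on free modules, so by density of $f_\sB^*$ its constraints are invertible everywhere; and (c) the transformation $u$: one has $b'\gamma'C=\un_\sC\otimes_{b(R)}b\gamma(f_*C)$ by \eqref{eq3.3}, and the morphism $w_C\colon b\gamma(f_*C)\by{v_{f_*C}}a(f^*f_*C)\by{a(\epsilon_C)}a(C)$ is $b(R)$-balanced for the trivial ($\varepsilon$-)action on $a(C)$, hence descends to $u_C\colon b'\gamma'C\to a(C)$, with naturality and $\otimes$-compatibility inherited from $v$.

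For uniqueness I read the statement as part of the (lax) universal property of the push-out \eqref{eq2.5}: $b'$ is unique among $\otimes$-functors with $b'f_\sB^*=b$ that are compatible with the cocone datum $v$, equivalently that induce the augmentation $\varepsilon=b'(\epsilon^\sB_R)$ (with $\epsilon^\sB_R\colon f_\sB^*f^\sB_*R\to R$ the counit). Two such functors $b',b''$ give a canonical comparison $b'f_\sB^*\iso b''f_\sB^*$ commuting with $\epsilon_R$, which by Proposition \ref{p2.2} lies in the image of $f_!$ and so descends to a $\otimes$-transformation $b'\Rightarrow b''$, invertible by density of $f_\sB^*$. I expect the genuine obstacle to be twofold: the balancing and monoidal-coherence bookkeeping in (b) and (c), where the weak projection formula, Cartesianity and the explicit form of $\epsilon_\un$ via \eqref{eq5.8a} must be combined; and, more conceptually, the need to pin down the augmentation, since the bare equation $b=b'f_\sB^*$ does \emph{not} determine $b'$ — the monoid $R\simeq\prod_{g\in\Gamma}\un$ admits $|\Gamma|$ distinct augmentations, and it is precisely the datum $v$ (through $\varepsilon$) that selects the correct one.
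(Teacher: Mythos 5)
Your proposal is correct and follows the same strategy as the paper: manufacture the augmentation $\varepsilon\colon b(R)=b\gamma(f_*\un)\by{v_{f_*\un}}a(f^*f_*\un)\by{a(\epsilon_\un)}\un_\sC$ and then invoke the universal property of extension of scalars along a monoid homomorphism (Corollary \ref{pA.2}) --- which you simply unfold into the explicit cokernel formula rather than citing. The one place where your route genuinely diverges is the construction of $u$: the paper first obtains $b'\gamma'f^*\Rightarrow af^*$ from Proposition \ref{p4.2} a) and then applies the retraction $\rho$ of Proposition \ref{p2.2} to descend it along $f^*$, whereas you descend the balanced map $a(\epsilon_C)\circ v_{f_*C}$ through the coequaliser defining $b'\gamma'C$ directly; your version is more explicit, the paper's reuses machinery already set up and gets the $\otimes$-compatibility of $u$ for free from Proposition \ref{p2.2}. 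Your closing observation that $b=b'f_\sB^*$ alone does not pin down $b'$ and that $v$ (via $\varepsilon$) is the extra datum is a fair reading of the statement and matches what Corollary \ref{pA.2} actually delivers (uniqueness of $\tilde F$ \emph{given} $\beta$); only your illustrative claim that $R\simeq\prod_{g\in\Gamma}\un$ with $|\Gamma|$ augmentations is off --- $R=\gamma(f_*\un)$ need not split in $\sB$ (it can be a field admitting no augmentation at all), it is $f^*f_*\un$ that splits --- but this does not affect the argument.
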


\begin{proof} Applying $a$ to the counit of the adjunction $(f^*,f_*)$ yields a morphism
\[a^*f^*f_*\un\to a^*\un=\un.\]

Composing it with $v$ gives another morphism
\[b(R)=b\gamma f_*\un\to \un\]
which is a homomorphism of monoids by construction. By Corollary \ref{pA.2}, this yields the first claim. By Proposition \ref{p4.2} a), we then get a natural $\otimes$-transformation $b'\gamma' f^*\Rightarrow af^*$, and Proposition \ref{p2.2} provides $u$.
\end{proof}

\section{Tannakian categories}\label{s4}

\subsection{The set-up} Let $\sA,\sA',f^*$ be again as in Section \ref{s.mon}. We add some assumptions: $\sA,\sA'$ are abelian and rigid, and $Z(\sA)\iso Z(\sA')=K$, where $K$ is a field of characteristic $0$. Throughout, we suppose that $f^*$ satisfies the hypotheses of Theorem \ref{p5.3} b), hence satisfies descent.

\subsection{Going up}\label{s4.3}

Let $\omega:\sA\to \Vec_L$ be a fibre functor, where $L$ is an extension of $K$ (thus $\sA$ is a Tannakian category over $K$).  Write $E=\omega(f_*\un)$.

\begin{lemma}\label{l4.4}  $E$ is an étale $L$-algebra of dimension $|\Gamma|$.
\end{lemma}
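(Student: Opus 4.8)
The plan is to exhibit $E$ as the image under $\omega$ of a separable commutative monoid in $\sA$, and then invoke the fact that a finite-dimensional commutative separable algebra over a field is étale. Recall from \S\ref{s2.3} that $f_*\un$ carries a commutative monoid structure, with multiplication $m$ given by \eqref{eq6.1} for $C=D=\un$; since $\omega$ is a faithful exact strong $\otimes$-functor, $E=\omega(f_*\un)$ is a commutative $L$-algebra. It therefore suffices to prove (a) $\dim_L E=|\Gamma|$, and (b) that $f_*\un$ is separable, i.e. admits a separability idempotent $\tilde e\in\Hom_\sA(\un,f_*\un\otimes f_*\un)$.

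For (a) I would use that $f^*$ is Cartesian (it has descent, Corollary \ref{c2.1}), so \eqref{eq5.8} gives $f^*f_*\un\iso\bigoplus_{g\in\Gamma}g^*\un\cong|\Gamma|\,\un$. As $f^*$ is a $K$-linear rigid $\otimes$-functor it preserves categorical ranks, whence $\rg(f_*\un)=\rg(|\Gamma|\,\un)=|\Gamma|$; applying the fibre functor $\omega$ converts this rank into $\dim_L E$, giving $\dim_L E=|\Gamma|$.

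For (b) the key is to recognise $f^*f_*\un$, equipped with the multiplication $f^*m$, as the split étale algebra $\un^\Gamma$. Precisely, the triangle computing $\mu$ in the proof of Proposition \ref{p2.2} shows that, under the identifications \eqref{eq5.8}, the lax structure \eqref{eq6.1} becomes the diagonal projection $\bigoplus_{g,h}\un\to\bigoplus_g\un$, which is exactly coordinatewise multiplication on $\un^\Gamma$. Hence $(f^*f_*\un,f^*m)\cong\un^\Gamma$ is split étale, with separability idempotent the diagonal $e=\sum_{g}e_g\otimes e_g\in\Hom_{\sA'}(\un,\un^\Gamma\otimes\un^\Gamma)$. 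This $e$ is invariant under the $\Gamma$-action of \S\ref{s1.1} (which on $f^*f_*\un\cong\un^\Gamma$ is translation by Lemma \ref{l5.4}, hence acts by simultaneous translation on the two index sets and so fixes the diagonal), so Condition (1G), applied with $A=\un$ and $B=f_*\un\otimes f_*\un$, descends it to a unique $\tilde e\in\Hom_\sA(\un,f_*\un\otimes f_*\un)$. The defining identities of a separability idempotent ($m\circ\tilde e$ equals the unit $\eta$, and the two $f_*\un$-actions on $\tilde e$ coincide) hold because they hold after the faithful functor $f^*$. Thus $f_*\un$ is separable, so $E$ is a separable commutative $L$-algebra of dimension $|\Gamma|$, hence étale.

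The main obstacle is the identification in (b) that $f^*$ sends the abstract multiplication \eqref{eq6.1} to the coordinatewise product: this is what rules out non-étale commutative structures on the regular representation (equivariance alone does not), and it is the only place where the combinatorics of the pseudo-functor enter—fortunately it is already carried out in the diagram chase of Proposition \ref{p2.2}. A fully equivalent route, avoiding separability idempotents, is to show that the pairing $f_*\un\otimes f_*\un\by{m}f_*\un\by{\tr}\un$ coming from the weak trace structure becomes, after $f^*$, the standard perfect pairing on $\un^\Gamma$ (using (2u): $f^*\tr$ is the sum map), and that $\omega(\tr)$ is the algebra trace $\operatorname{Tr}_{E/L}$; nondegeneracy of the trace form then yields étaleness in characteristic $0$. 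I prefer the idempotent argument precisely because it sidesteps identifying $\omega(\tr)$ with $\operatorname{Tr}_{E/L}$ (a nondegenerate Frobenius form alone would not preclude nilpotents).
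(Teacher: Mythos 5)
Your argument is correct, but it takes a genuinely different (and longer) route than the paper's. The paper simply chains the weak projection formula (with $A=f_*\un$) and Cartesianity: $f_*\un\otimes f_*\un=f_*f^*f_*\un=f_*\prod_\Gamma\un=\prod_\Gamma f_*\un$, so that $\omega$ yields an algebra isomorphism $E\otimes_LE\iso\prod_\Gamma E$, $r\otimes s\mapsto(rg(s))_g$. Both claims then drop out at once --- the multiplication becomes the projection onto the $g=1$ factor, so its kernel is generated by an idempotent --- and one even gets the stronger fact that $E$ is a Galois $\Gamma$-algebra, which is what the Remark following the lemma records. You instead separate the dimension (a rank computation, which is fine) from étaleness, which you obtain by descending the separability idempotent of $\un^\Gamma$ along (1G); this keeps the argument inside $\sA$, but it forces the invariance check, and there your citation of Lemma \ref{l5.4} is not quite the statement you need: that lemma concerns the action of $f^*[h]$ on $f^*f_*C$, while the action to be controlled is the one of \S\ref{s1.1} on $\sA'(f^*\un,f^*B)$. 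The bridge is Lemma \ref{l5.3}~b), or more cleanly Lemma \ref{l5.2}, which identifies $f_*\un$ with the regular representation $Z[\Gamma]$ and hence $\sA'(\un,f^*(f_*\un\otimes f_*\un))$ with $Z[\Gamma]\otimes Z[\Gamma]$ carrying the diagonal translation action, under which the diagonal idempotent is indeed fixed. Your trace-pairing variant would also work, but the paper's isomorphism $E\otimes_LE\iso\prod_\Gamma E$ is precisely the device that makes both the idempotent and the identification $\omega(\tr)=\operatorname{Tr}_{E/L}$ unnecessary.
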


\begin{proof} By Cartesianity and the projection formula, we have
\[f_*\un\otimes f_*\un=f_*f^*f_*\un=f_*\prod_\Gamma \un=\prod_\Gamma f_*\un\]
hence
\[E\otimes_L E\iso \prod_\Gamma E\]
where the homomorphism is given by $r\otimes s\mapsto (rg(s))_{g\in \Gamma}$. Here the action of $\Gamma$ on $E$ is induced by its action on $f_*\un$. The claims follow.
\end{proof}

\begin{rk} Thus $E$ is a Galois $\Gamma$-algebra over $L$ in the sense of \cite[1.3]{bs}.
\end{rk}

As in \eqref{eq3.1}, we get  a $\otimes$-functor
\[\tilde\omega':\sA'\to (\Vec_L)^E=\Vec_E\]
where the right hand side denotes the $\otimes$-category of $E$-modules which are finite-dimensional over $L$ (\emph{i.e.} of finite type over $E$).

\begin{lemma}\label{l4.2} The functor $\tilde\omega'$ is exact and faithful.
\end{lemma}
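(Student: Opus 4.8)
The plan is to factor $\tilde\omega'$ through the forgetful functor $U\colon \Vec_E\to \Vec_L$ and exploit the fact that exactness and faithfulness in $\Vec_E$ can be tested after $U$. Recall from \eqref{eq3.3} that $\tilde\omega'(C)=(\omega f_*C,\,\omega f_*\epsilon_C)$; in particular the underlying $L$-vector space of $\tilde\omega'(C)$ is $\omega(f_*C)$, so that $U\circ\tilde\omega'=\omega\circ f_*$. Now $U$ is restriction of scalars along $L\to E$, hence it is exact, faithful, and reflects exactness: a sequence of $E$-modules is exact if and only if it is exact as a sequence of $L$-vector spaces. Consequently it suffices to prove that $\omega\circ f_*$ is exact and faithful. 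Since $\omega$ is a fibre functor it is itself exact and faithful, so everything reduces to the exactness and faithfulness of $f_*\colon \sA'\to\sA$.

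For exactness I would invoke the biadjointness of the pair $(f^*,f_*)$. On the one hand $f_*$ is right adjoint to $f^*$, hence left exact. On the other hand, under the standing characteristic-$0$ (hence no $|\Gamma|$-torsion) hypothesis, Proposition \ref{p2.4} shows that $f_*$ is also \emph{left} adjoint to $f^*$, hence right exact. Therefore $f_*$ is exact.

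For faithfulness I would use Cartesianity. By Definition \ref{d5.3} the natural isomorphism \eqref{eq5.8} gives $f^*f_*C\cong\bigoplus_{g\in\Gamma}g^*C$, naturally in $C$. If $\phi$ is a morphism of $\sA'$ with $f_*\phi=0$, then $f^*f_*\phi=0$, and under this isomorphism $f^*f_*\phi$ corresponds to $\bigoplus_{g\in\Gamma}g^*\phi$; taking the $g=1$ component and using $1^*\cong\Id$ gives $\phi=0$. Hence $f_*$ is faithful, and combining with the reductions above we conclude that $\tilde\omega'$ is exact and faithful.

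The only genuinely substantive point is the exactness of $f_*$: a right adjoint is automatically only left exact, and right exactness is exactly what the ambidextrous (Frobenius) adjunction of Proposition \ref{p2.4} supplies; this is the analogue, in the setting of Example \ref{ex2.1}, of the exactness of $\Ind_H^G$ for a finite-index subgroup. If one preferred to avoid Proposition \ref{p2.4}, one could instead note that $f^*$ is exact and faithful, hence reflects exactness, while $f^*f_*\cong\bigoplus_{g\in\Gamma}g^*$ is exact as a finite direct sum of the exact autoequivalences $g^*$; applying $f^*$ to a sequence $f_*S$ then shows that $f_*$ carries exact sequences to exact ones. Everything else is routine.
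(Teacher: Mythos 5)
Your proof is correct and follows essentially the same route as the paper: both factor $\tilde\omega'$ through the forgetful functor $U\colon \Vec_E\to\Vec_L$ via $U\tilde\omega'=\omega f_*$ and use that $U$ is faithfully exact to reduce everything to the exactness and faithfulness of $\omega f_*$. The only difference is that the paper simply asserts that $f_*$ is exact and faithful, whereas you supply the justification (via the ambidextrous adjunction of Proposition \ref{p2.4}, or via Cartesianity and the fact that $f^*$ reflects exactness), which is a welcome but non-divergent elaboration.
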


\begin{proof} Let $U:\Vec_E\to \Vec_L$ be the forgetful functor. We have $U\tilde \omega' = \omega f_*$. The right hand side is exact  and faithful as a composition of two such functors. But $U$ is also faithful and exact, hence faithfully exact, hence the conclusion.
\end{proof}

\subsection{The neutral case} Here we assume $L=K$. Let  $G=\Aut^{\otimes}(\omega)$ be the Tannakian group of $\omega$ and $H=\Aut^{\otimes}(\omega')$ that of $\omega'$  (recall that every $\otimes$-endomorphism of $\omega$ or  $\omega'$ is an automorphism, hence unital, by rigidity \cite[Rk. 2.18]{dm}). By Tannakian duality, we may then write $\sA=\Rep_K(G)$ and $\sA'=\Rep_K(H)$.

The $\otimes$-functor $f^*$ induces a homomorphism $i:H\to G$.  The equivalence of Lemma \ref{l5.2} is induced by $\omega'$ since $\omega=\omega'\circ f^*$ (indeed, $\sA'(\un,B)$ is functorially isomorphic to $\omega'(B)$ for any split $B\in \sA'$). Whence a homomorphism $p:G\to \Gamma$.

\begin{thm}\label{t5.1a} 
The sequence $1\to H\by{i} G\by{p} \Gamma\to 1$ is exact.
\end{thm}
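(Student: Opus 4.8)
The plan is to translate exactness of $1\to H\by{i} G\by{p}\Gamma\to 1$ into the three Tannakian criteria — that $i$ be a closed immersion, that $p$ be faithfully flat, and that $\IM i=\Ker p$ — and to verify each of them from the descent properties of $f^*$ established above. Throughout I identify $\sA=\Rep_K(G)$, $\sA'=\Rep_K(H)$, $i^*=f^*$, and $p^*\colon \Rep_K(\Gamma)\iso \sA^0(f^*)\inj \sA$ with the inclusion of the Artin objects furnished by Lemma~\ref{l5.2}. First note that $p\circ i=1$: the composite $i^*p^*$ sends an Artin object $A$ to the trivial $H$-representation $f^*A$, so it is the ``constant'' $\otimes$-functor, which corresponds to the trivial homomorphism $H\to\Gamma$.

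For the \emph{closed immersion} property, Lemma~\ref{l1.1}~a) says that $f^*=i^*$ is dense, so every object of $\sA'=\Rep_K(H)$ is a direct summand, \emph{a fortiori} a subquotient, of some $i^*V$ with $V\in\sA$; by the standard criterion \cite[Prop.~2.21~b)]{dm} this makes $i$ a closed immersion. For \emph{faithful flatness} of $p$, the functor $p^*$ is fully faithful as the inclusion of a full subcategory, and its essential image $\sA^0(f^*)$ is stable under subquotients: if $V\inj A$ with $A$ Artin, then $f^*V\inj f^*A\iso n\un$, and a subobject of a trivial $H$-representation is trivial, whence $V$ is Artin (dually for quotients, using rigidity). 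By \cite[Prop.~2.21~a)]{dm}, $p$ is faithfully flat, hence surjective since $\Gamma$ is finite.

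It remains to prove \emph{exactness in the middle}. Put $N=\Ker p$; from $pi=1$ we have $\IM i\subseteq N$, and we must show $H=N$. The remaining condition in the Tannakian exactness criterion (\cite[Prop.~2.21]{dm} together with the characterisation of $\Ker p$, in the form later completed by Esnault--Hai--Sun) is that for each $V\in\sA$ the maximal trivial $H$-subobject $T\subseteq f^*V$ be of the form $f^*W$ for a subobject $W\inj V$ lying in $\IM p^*$. This is where descent enters essentially: each canonical isomorphism $i_g(V)\colon g^*f^*V\iso f^*V$ carries the maximal trivial subobject $g^*T$ of $g^*f^*V$ (note $g^*\un=\un$) onto that of $f^*V$, namely $T$, so $T$ underlies a sub-descent-datum of the canonical descent datum on $f^*V$. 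By effectivity of descent ((2G), available through Theorem~\ref{p5.3} and Corollary~\ref{c2.1}) together with full faithfulness ((1G)), this descends to a subobject $W\inj V$ with $f^*W\iso T$; since $f^*W$ is trivial, $W$ is Artin, i.e.\ $W\in\IM p^*$, and a faithfulness-and-exactness argument for $f^*$ shows $W$ is the largest such subobject. Together with the two previous steps this verifies all hypotheses of the criterion, whence $\IM i=\Ker p$ and the sequence is exact. (Equivalently, the computation shows that for every $V\in\sA$ one has $V|_N$ trivial $\iff V\in\IM p^*\iff V$ Artin $\iff f^*V$ trivial, the first equivalence because $N=\Ker p$ with $p$ faithfully flat.)

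The main obstacle is precisely this middle exactness. In the classical treatment (\cite[Prop.~6.23~a)]{dm}) one needs the geometric Galois group to be normal, or a semisimplicity hypothesis, exactly in order to guarantee that the maximal trivial subobject of $f^*V$ is already defined over the base. Here the full strength of the descent property of $f^*$ supplies this directly, via the sub-descent-datum argument above, so that neither normality nor semisimplicity is required — which is what makes the statement applicable in the generality claimed.
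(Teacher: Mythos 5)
Your proof is correct, but it reaches the conclusion by a genuinely different route from the paper's. The paper disposes of everything except the surjectivity of $p$ in one stroke by invoking its Theorem \ref{t2.1}: the retraction $\rho$ of Lemma \ref{l1.1}~b) (built from the splitting $\sigma_C$ of the counit supplied by cartesianity) shows that $f_!\colon\Hom^{\otimes,\un}(\omega',\omega')\to\Hom^\otimes(\omega,\omega)$ is injective with image exactly the transformations that are the identity on $\sA^0(f^*)$, which is precisely the statement that $i$ is injective and $\IM i=\Ker p$; only $p$ epi then remains, and there the two arguments coincide (\cite[Prop.\ 2.21~(a)]{dm} plus the simplicity of $\un$ in $\sA'$, i.e.\ your observation that a subobject of a trivial object is trivial). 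You instead verify the Esnault--Hai--Sun exactness criterion: closed immersion via density of $f^*$ and \cite[Prop.\ 2.21~(b)]{dm}, faithful flatness as in the paper, and middle exactness by noting that the maximal trivial subobject of $f^*V$ is canonical, hence underlies a sub-descent-datum of the canonical one on $f^*V$, and descends by (1G)+(2G) to an Artin subobject of $V$. Your sub-descent-datum argument is sound (the $g^*$ and the $i_g(V)$ are $\otimes$-equivalences and isomorphisms respectively, so they preserve maximal trivial subobjects, and the descended mono is recovered from faithful exactness of $f^*$), and it is an attractive, more ``geometric'' way to see where descent enters. What it costs is the importation of the Esnault--Hai--Sun criterion, which is not in this paper's bibliography and is a nontrivial external input; what the paper's route buys is self-containedness --- Theorem \ref{t2.1} is proved from scratch via $\rho$ and is exactly the tool the introduction advertises as replacing the normality/semisimplicity hypotheses of \cite[Prop.\ 6.23~(a)]{dm} --- at the price of being less transparent about the role of the maximal trivial subobject, which your argument makes explicit.
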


\begin{proof}  By Theorem \ref{t2.1}, it suffices to show that $p$ is epi.  By \cite[Prop. 2.21 (a)]{dm}, we must show that every subobject $B\in \sA$ of an object $A\in \sA^0(f^*)$ belongs to $\sA^0(f^*)$;  but this is obvious since $\un$ is simple in  $\sA'$ by  \cite[Prop. 1.17]{dm}. 
\end{proof}

\subsection{Globalisation}\label{s4.2} Let $K$ be a field of characteristic $0$, and let $\usA$ be a pseudo-functor from $B^\gal\Pi$ to the $2$-category $\Ex^\rig(K)$ of rigid abelian $\otimes$-categories $\sC$ with $\End_\sC(\un)=K$, $\otimes$-functors and $\otimes$-natural transformations. Let $\sA=\usA(*)$, where $*$ is the terminal object. Define $\sA_\infty$ as $2\text{-}\colim_{T\in B^\gal(\Pi)} \usA(T)$: it belongs to $\Ex^\rig(K)$.

Let $\omega_\infty:\sA_\infty\to \Vec_K$ be a fibre functor to the category of finite-dimensional $K$-vector spaces: by restriction, it defines a fibre functor $\omega_T$ on $\usA(T)$ for every $T$. For $T=*$, we write  $\omega_T=\omega$.  Let  $G=\Aut^\otimes(\omega)$ be the Tannakian group of $\omega$ and $H=\Aut^{\otimes}(\omega_\infty)$ the one of $\omega_\infty$. 

\begin{lemma}\label{l3.1} The natural morphism $H\to \lim_T \Aut^{\otimes}(\omega_T)$ is an isomorphism.
\end{lemma}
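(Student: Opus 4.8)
The plan is to establish the isomorphism $H \to \varprojlim_T \operatorname{\mathbf{Aut}}^\otimes(\omega_T)$ by unwinding the definition of $\sA_\infty$ as a $2$-colimit and translating the statement about Tannakian groups into a statement about $\otimes$-endomorphisms of fibre functors. The key observation is that $\operatorname{Aut}^\otimes(\omega_\infty)$ is a group functor on commutative $K$-algebras $R$, assigning to $R$ the group of $R$-linear $\otimes$-automorphisms of the extension $\omega_\infty \otimes_K R$; so it suffices to check the claimed isomorphism on $R$-points functorially in $R$, and since every $\otimes$-endomorphism is automatically an automorphism by rigidity (as noted in the excerpt via \cite[Rk. 2.18]{dm}), I may work with $\otimes$-\emph{endomorphisms} throughout.

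First I would record that $\sA_\infty = 2\text{-}\varinjlim_{T} \usA(T)$ means that every object of $\sA_\infty$ comes from some $\usA(T)$ and that $\Hom$-sets are computed as filtered colimits of the $\Hom$-sets of the $\usA(T)$ (the indexing category $B^\gal\Pi$ being cofiltered, or rather its opposite filtered, once one passes to the transition functors $\usA(T) \to \usA(T')$). Correspondingly, a $\otimes$-natural transformation of $\omega_\infty$ is the same datum as a compatible family of $\otimes$-natural transformations, one on each object of $\sA_\infty$; because each object factors through some $\usA(T)$ and $\omega_\infty$ restricts to $\omega_T$ there, such a family is precisely a compatible family $(v_T)_T$ with $v_T \in \operatorname{\mathbf{Aut}}^\otimes(\omega_T)$, the compatibility being exactly along the transition maps indexing the inverse limit. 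This is the heart of the matter: I would make explicit that giving $v \in \operatorname{Aut}^\otimes(\omega_\infty)(R)$ is the same as giving, for each object $A \in \sA_\infty$, an automorphism $v_A$ of $\omega_\infty(A) \otimes_K R$, natural and monoidal; since $A$ lives in some $\usA(T)$ and this data is stable under the colimit transition functors, it assembles into an element of $\varprojlim_T \operatorname{\mathbf{Aut}}^\otimes(\omega_T)(R)$, and conversely.

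The map $H \to \varprojlim_T \operatorname{\mathbf{Aut}}^\otimes(\omega_T)$ is the one induced by restriction: an automorphism of $\omega_\infty$ restricts to one of each $\omega_T$, and these restrictions are compatible by construction, giving an element of the limit. To see it is an isomorphism of affine group schemes, it is enough to check it on $R$-points for every commutative $K$-algebra $R$, and the previous paragraph shows that on $R$-points both sides are literally the same set of compatible families: injectivity holds because an endomorphism of $\omega_\infty$ vanishing on every $\omega_T$ vanishes on every object (every object coming from some $T$), and surjectivity holds because a compatible family $(v_T)$ defines a value on each object of $\sA_\infty$ that is well-defined (independent of the chosen $T$, by compatibility along transitions) and is automatically natural and monoidal since each $v_T$ is.

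The step I expect to need the most care is the verification that the data genuinely glue, i.e. that the assignment $A \mapsto v_{T}(A)$ for $A \in \usA(T)$ is independent of the choice of $T$ through which $A$ factors and respects the colimit $\Hom$-sets, so that it descends to a well-defined $\otimes$-natural transformation of $\omega_\infty$. This is really a compatibility bookkeeping with the pseudo-functor transition data for $\usA$ and the restriction isomorphisms $\omega_{T'} \circ (\text{transition}) \cong \omega_T$ that make $\omega_\infty$ restrict to each $\omega_T$; the filteredness of the colimit makes two representatives agree after passing to a common refinement, which is exactly what the inverse limit condition encodes. Once this is in place, the monoidality and naturality are inherited termwise and the bijection on $R$-points is functorial in $R$, yielding the asserted isomorphism of group schemes.
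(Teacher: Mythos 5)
Your proof is correct and follows the same route as the paper's (much terser) argument: reduce to $R$-points and then unwind the definition of $\sA_\infty$ as a filtered $2$-colimit, so that a $\otimes$-automorphism of $\omega_\infty\otimes_K R$ is exactly a compatible family of $\otimes$-automorphisms of the $\omega_T\otimes_K R$. The extra bookkeeping you supply (gluing along transition functors, naturality and monoidality being inherited termwise) is precisely what the paper leaves implicit.
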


\begin{proof} It suffices to verify this on $R$-points for any $K$-algebra $R$. Then it follows from the definition of $\sA_\infty$.
\end{proof}

\begin{thm}\label{t5.1} Suppose that suppose that $f^*$ satisfies the hypotheses of Theorem \ref{p5.3} b) for any Galois $f:T\to *$ .  Then the sequence
\[1\to H\by{i} G\by{p} \Pi\to 1\]
is exact.
\end{thm}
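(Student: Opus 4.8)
The plan is to reduce the profinite statement of Theorem~\ref{t5.1} to the finite-level statement of Theorem~\ref{t5.1a} by passing to the limit over the directed system of finite Galois quotients. The essential observation is that $\Pi = \varprojlim_T \Gal(T/*)$, where $T$ ranges over $B^\gal\Pi$, and that the whole diagram of Tannakian groups should be compatible with this limit. So first I would fix, for each object $T\in B^\gal\Pi$ corresponding to a finite quotient $\Gamma_T = \Gal(T/*)$, the restriction functor $f_T^*:\sA\to \usA(T)$ together with its fibre functor $\omega_T = \omega_\infty|_{\usA(T)}$. Since $\omega = \omega_T\circ f_T^*$, Theorem~\ref{t5.1a} applies to each $T$ (the hypotheses of Theorem~\ref{p5.3}~b) hold by assumption for every Galois $f:T\to *$), giving a short exact sequence
\[
1\to H_T\by{i_T} G\by{p_T} \Gamma_T\to 1,
\]
where $H_T = \Aut^\otimes(\omega_T)$ and $G = \Aut^\otimes(\omega)$ is independent of $T$.

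Next I would organise these sequences into a directed system indexed by $T$ (with transition maps coming from the pseudo-functor structure $\usA(T')\to \usA(T)$ for $T\to T'$) and take the inverse limit. On the quotient side $\varprojlim_T \Gamma_T = \Pi$ by definition of $\Pi$ as a profinite group. On the kernel side, Lemma~\ref{l3.1} gives precisely $H \iso \varprojlim_T H_T = \varprojlim_T \Aut^\otimes(\omega_T)$, since $H = \Aut^\otimes(\omega_\infty)$ and $\sA_\infty = 2\text{-}\varinjlim_T \usA(T)$. The middle group $G$ is constant. Thus the limit sequence reads $1\to H\to G\to \Pi\to 1$, and I would check exactness term by term, evaluating everything on $R$-points for an arbitrary $K$-algebra $R$ as in the proof of Lemma~\ref{l3.1}, so that each functor $\Aut^\otimes(-)$ becomes an honest group-valued functor and limits are computed pointwise.

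The one genuinely delicate point is \emph{right-exactness at the end}, i.e.\ surjectivity of $p:G\to \Pi$. Exactness of $1\to H_T\to G\to \Gamma_T\to 1$ is preserved by inverse limits on the left (kernels commute with limits), giving exactness at $H$ and at $G$ immediately, $\Ker p = \varprojlim_T \Ker p_T = \varprojlim_T H_T = H$. Surjectivity, however, is where $\varprojlim^1$ or connectivity issues could in principle intervene. Here I would exploit that we are working with affine group schemes, so surjectivity of $p$ means faithful flatness, which can be tested on the level of Hopf algebras: $\mathcal{O}(\Pi) = \varinjlim_T \mathcal{O}(\Gamma_T)$ (a filtered colimit of finite étale, hence of \'etale, $K$-algebras), and $p^*:\mathcal{O}(\Pi)\to \mathcal{O}(G)$ is injective because each $p_T^*:\mathcal{O}(\Gamma_T)\hookrightarrow \mathcal{O}(G)$ is and injectivity is preserved under filtered colimits. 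Since $\Pi$ is pro-\'etale (a constant/finite group scheme at each level), faithful flatness of $p$ follows from this injectivity of comodule coalgebras, exactly as in \cite[Prop. 2.21]{dm}. I expect this last verification to be the main obstacle, precisely because it is the one step that does not reduce formally to a finite computation; the rest is bookkeeping with the directed limit and an application of Lemma~\ref{l3.1}.
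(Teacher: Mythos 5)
Your proposal is correct and takes essentially the same route as the paper, whose entire proof reads ``In view of Lemma \ref{l3.1}, this follows from Theorem \ref{t5.1a}'': apply the finite-level exact sequence for each $\Gamma_T$ and pass to the inverse limit, using Lemma \ref{l3.1} to identify $H$ with $\varprojlim_T H_T$. Your additional verification that surjectivity of $p$ survives the limit (via injectivity of the filtered colimit of Hopf-algebra inclusions $\mathcal{O}(\Gamma_T)\hookrightarrow\mathcal{O}(G)$) spells out a point the paper leaves implicit, but it is not a different argument.
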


\begin{proof} In view of Lemma \ref{l3.1}, this follows from Theorem \ref{t5.1a}.
\end{proof}

\begin{rk} Even if it is not obvious, this proof is inspired by Ayoub's proof of the corresponding theorem in \cite[Prop. 5.7]{ayoub}, using Hopf algebras. He explained me a version using ind-Tannakian categories, which inspired the retraction of Lemma \ref{l1.1} b). Here is this argument, translated from French: 

One has a 
morphism of ind-Tannakian categories $e^*:T \to T'$, a fibre  functor $w'^*:T' \to \text{Vect}_K$ [the category of all small $K$-vector spaces] and one sets $w^*=w'^*\circ e^*$. Assume that for every object $M\in T'$, the morphism
$e^*e_*M\otimes_{e^*e_*K}K\to M$ is an isomorphism. Since $w^*w_*K=w'^*e^*e_*w'_*K$ 
we get that $w^*w_*K\otimes_{w^*e^*e_*K}K\simeq w'^*w'_*K$ as desired. 
\end{rk}

\begin{rk} There is an obvious extension of Theorem \ref{t5.1} to the case of $\otimes$-morphisms between two fibre functors, as in Theorem \ref{t2.1}. Formulating it is left to the reader. Similarly for another extension to Tannakian monoids for fibre functors on not necessarily rigid $\otimes$-categories.
\end{rk}

\subsection{Enrichments}\label{s4.4} Consider now a factorisation of $\omega$
\begin{equation}\label{eq3.2}
\sA\by{\gamma} \sB\by{\omega_\sB}\Vec_L
\end{equation}
where $\sB$ is another Tannakian category over $K$ and $\gamma,\omega_\sB$ are exact and faithful $\otimes$-functors.

Let $\sB'$ be the universal extension of \S \ref{s3.1}, and take the notation of  \eqref{eq2.5} and \eqref{eq2.5a}. Since $\sB$ is rigid, its tensor structure is exact.  By Lemmas  \ref{lA.1a} d) and \ref{lA.2}, $\sB'$ is abelian and the forgetful functor $f^\sB_*$ is exact. 

Suppose that $\omega$ is the restriction to $\sA$ of a fibre functor $\omega':\sA'\to \Vec_L$. By Proposition \ref{p2.3}, applied with $(\sC,a,b,\beta)\equiv (\Vec_L,\omega,\omega_\sB,1)$, there exists a unique $\otimes$-functor $\omega'_\sB:\sB'\to \Vec_L$ such that $\omega'=\omega'_\sB f_\sB^*$. It is provided with a natural transformation $v:\omega'_\sB\gamma'f^*\Rightarrow \omega'f^*$ such that $v_{\un}$ is the morphism $\omega(\epsilon_{\un}):E=\omega(f_*\un)\to \omega(\un)=L$.

\begin{lemma}\label{l4.3} The functor $\omega'_\sB$ is the composition of $\tilde \omega'$ and the functor $L\otimes_R -:\Vec_R\to \Vec_L$. It is exact.
\end{lemma}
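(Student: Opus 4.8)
The plan is to read off the two factors directly from the construction of $\omega'_\sB$ given in the proof of Proposition \ref{p2.3}, rather than to reconstruct $\omega'_\sB$ from scratch, and then to check exactness factor by factor.

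Recall that $\omega'_\sB$ was produced by applying Proposition \ref{p2.3} with $(\sC,a,b)=(\Vec_L,\omega',\omega_\sB)$ and $v=1$. In that proof $b'=\omega'_\sB$ is assembled in two steps: first one applies $b=\omega_\sB$ to $R$-modules, which yields a $\otimes$-functor
\[\Theta\colon \sB'=\sB^R\to (\Vec_L)^{\omega_\sB(R)};\]
then, via Corollary \ref{pA.2}, one base-changes along the monoid homomorphism $\omega_\sB(R)\to \un$ built from the counit. Since $\omega_\sB(R)=\omega_\sB\gamma(f_*\un)=\omega(f_*\un)=E$, the target of $\Theta$ is $(\Vec_L)^E=\Vec_E$ (this is the category $\Vec_R$ of the statement), and that monoid homomorphism is exactly the augmentation $\epsilon\colon E\to L$ of the preceding paragraph, namely the morphism $\omega(\epsilon_{\un})$; hence the second step is $L\otimes_E-\colon\Vec_E\to\Vec_L$. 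This already gives $\omega'_\sB=(L\otimes_E-)\circ\Theta$. Finally, comparing the defining formula \eqref{eq3.3} applied to $\omega_\sB$ with the definition of $\tilde\omega'$ in \S\ref{s4.3}, one sees that $\Theta$ is the $\Vec_E$-valued extension of $\omega_\sB$ and satisfies $\Theta\circ\gamma'=\tilde\omega'$; this is the functor the statement denotes $\tilde\omega'$, so $\omega'_\sB$ is indeed the composition of $\tilde\omega'$ with $L\otimes_E-$.

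It remains to see that both factors are exact. For $\Theta$ I would argue as in Lemma \ref{l4.2}: exactness in $\sB'=\sB^R$ is detected by the forgetful functor $f^\sB_*$, which is exact and faithful by Lemmas \ref{lA.1a} d) and \ref{lA.2}; the composite of $\Theta$ with the forgetful functor $U\colon\Vec_E\to\Vec_L$ equals $\omega_\sB\circ f^\sB_*$ and is therefore exact, and since $U$ is itself faithful and exact it reflects exact sequences, whence $\Theta$ is exact. For $L\otimes_E-$ I would use Lemma \ref{l4.4}: $E$ is an étale $L$-algebra and $\operatorname{char}K=0$, so $E$ is a finite product of finite separable field extensions of $L$, in particular a semisimple ring; over a semisimple ring every module is flat, so $L\otimes_E-$ is exact. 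Composing, $\omega'_\sB$ is exact.

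The only real point of care is bookkeeping of the augmentation: distinct $L$-algebra homomorphisms $E\to L$ yield genuinely different base-change functors $\Vec_E\to\Vec_L$ (they agree on free modules but not in general), so one must use precisely the $\epsilon$ supplied by Proposition \ref{p2.3}. This is harmless here exactly because we extract $\Theta$ and $L\otimes_E-$ from the construction of $\omega'_\sB$ itself; no separate appeal to a universal property, which would leave $\epsilon$ ambiguous, is needed.
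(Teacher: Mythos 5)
Your proof is correct and follows essentially the same route as the paper: the factorisation is read off from the construction in Proposition \ref{p2.3} (via Corollary \ref{pA.2}), the first factor's exactness is detected through the forgetful functors as in Lemma \ref{l4.2}, and the second factor's exactness comes from flatness of the étale algebra $E$ over $L$ (Lemma \ref{l4.4}). Your explicit identification of the intermediate functor $\Theta:\sB'\to\Vec_E$ and of the augmentation $E\to L$ is a careful unpacking of what the paper compresses into ``functoriality of the construction of Proposition \ref{p2.3}''.
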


\begin{proof} The first claim follows by the functoriality of the construction of Proposition \ref{p2.3}. In the composition, the first functor is exact by Lemma \ref{l4.2}, and the second is exact because the homomorphism $R\to L$ is flat, thanks to Lemma \ref{l4.4}.
\end{proof}

Contrary to Lemma \ref{l4.2}, $\omega'_\sB$ is not faithful in general, for example if $\sB=\Vec_L$!
The following proposition gives a case where it is. Recall that a rigid $\otimes$-category $\sC$ is \emph{connected} if $Z(\sC):=\End_\sC(\un)$ is a field.

\begin{prop}\label{p4.3} In the above situation, the functor $\omega'_\sB$ is faithful if and only if $\sB'$ is connected. A sufficient condition is that the restriction of $\gamma$ to $\sA^0(f^*)$ is full. 
\end{prop}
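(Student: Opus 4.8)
Looking at Proposition \ref{p4.3}, I need to prove two things: first, the equivalence between faithfulness of $\omega'_\sB$ and connectedness of $\sB'$; second, that fullness of $\gamma|_{\sA^0(f^*)}$ is a sufficient condition for this.

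\begin{proof}[Proof proposal]
The plan is to prove the two claims separately, starting with the equivalence. Since $\sB'$ is abelian and rigid (by the discussion preceding the statement), and $\omega'_\sB:\sB'\to \Vec_L$ is an exact $\otimes$-functor by Lemma \ref{l4.3}, faithfulness of $\omega'_\sB$ is a fibre-functor question. First I would recall the standard Tannakian fact (from \cite[Prop. 1.19, 1.20]{dm}) that an exact $\otimes$-functor out of a rigid abelian $\otimes$-category is faithful precisely when it does not kill any nonzero object; equivalently, $\omega'_\sB$ is faithful if and only if the unit object $\un_{\sB'}=R$ generates, in the sense that $\omega'_\sB$ detects nonzero objects. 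By Lemma \ref{l4.3}, $\omega'_\sB$ factors as $\tilde\omega'$ followed by $L\otimes_R-:\Vec_R\to\Vec_L$, where the second functor is faithful exactly when the structure map $R\to L$ is injective, i.e. when $R$ has $L$ as a quotient field-factor sitting ``transversally''. The key translation is that $Z(\sB')=\End_{\sB'}(\un)$ is nothing but $\omega'_\sB$-relevant endomorphism algebra of $R$, and connectedness of $\sB'$ (that $Z(\sB')$ is a \emph{field}) forces the étale $L$-algebra governing the splitting to be a field, so that $\omega'_\sB$ cannot annihilate a nonzero object.

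For the forward and reverse implications concretely: if $\sB'$ is \emph{not} connected, then $Z(\sB')=\End_{\sB'}(\un)$ contains a nontrivial idempotent $e$, giving a nonzero object $eR$ whose image under $\omega'_\sB$ may vanish (this is exactly the phenomenon in the cautionary example $\sB=\Vec_L$ preceding the statement); conversely, if $\sB'$ is connected I would argue that any nonzero object $B$ with $\omega'_\sB(B)=0$ would, via rigidity and the exactness of $\omega'_\sB$, produce a proper nonzero idempotent in $Z(\sB')$, contradicting connectedness. I would lean here on the monoid-module description of $\S\ref{s2.3}$: $\sB'=\sB^R$ is the category of $R$-modules in $\sB$, $R=\gamma(f_*\un)$ is a commutative monoid, and $\omega_\sB(R)=E$ is the étale $L$-algebra of Lemma \ref{l4.4}; so $Z(\sB')$ tracks the idempotents of $R$, and connectedness is the statement that $R$ has no nontrivial idempotents detected by $\omega_\sB$.

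For the sufficient condition, suppose $\gamma|_{\sA^0(f^*)}$ is full. The idea is that fullness lets me transport the simplicity of $\un$ in $\sA'$ (used already in the proof of Theorem \ref{t5.1a} via \cite[Prop. 1.17]{dm}) down to a connectedness statement about $R$. By Lemma \ref{l5.2}, $\sA^0(f^*)\simeq \Rep_Z(\Gamma)$ with $Z=K$, and $f_*\un$ corresponds to the regular representation; the endomorphisms of $R=\gamma(f_*\un)$ as a monoid, equivalently the idempotents of $Z(\sB')$, are computed by $\Hom_{\sB}(\un,R)=\omega_\sB$-compatible invariants. Fullness of $\gamma$ on $\sA^0(f^*)$ means $\Hom_{\sB}(\gamma A,\gamma A')=\Hom_{\sA}(A,A')$ for Artin objects, so the endomorphism algebra of $R$ inside $\sB$ equals $\End_{\sA}(f_*\un)=E$-type data controlled by $\Gamma$, which is connected because $\un$ is simple. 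I expect the main obstacle to be the \emph{reverse} implication of the first claim—producing, from a nonzero object killed by $\omega'_\sB$, an honest nontrivial idempotent in $Z(\sB')$—since this requires carefully using rigidity (passing to $\underline{\Hom}(B,B)$ and its unit section) together with the exactness and $\otimes$-compatibility of $\omega'_\sB$ from Lemma \ref{l4.3}, rather than a purely formal argument.
\end{proof}
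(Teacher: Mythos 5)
Your overall skeleton matches the paper's (necessity from the connectedness of $\Vec_L$; sufficiency via the Deligne--Milne faithfulness criterion; the fullness hypothesis via a computation of $\End_{\sB'}(\un)$), but two steps do not go through as written. First, the implication ``$\sB'$ connected $\Rightarrow\omega'_\sB$ faithful'' is precisely \cite[Prop.~1.19]{dm} applied to the exact $\otimes$-functor $\omega'_\sB$ of Lemma \ref{l4.3}, and the paper simply cites it. You quote the right statement at the outset, but then set out to re-derive it by ``producing a nontrivial idempotent in $Z(\sB')$ from a nonzero object killed by $\omega'_\sB$'' and explicitly leave that step as the main obstacle; that is a genuine gap, and it is also not the standard mechanism. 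The usual argument uses the fact that $\un$ is simple when $Z(\sB')$ is a field (\cite[Prop.~1.17]{dm}) to see that the evaluation $B\otimes B^\vee\to\un$ is an epimorphism for every $B\neq 0$; an exact $\otimes$-functor preserves this epimorphism onto $\un_{\Vec_L}\neq 0$, so $\omega'_\sB(B)\neq 0$, and faithfulness follows by applying this to images of morphisms. No idempotent is produced. Either cite the proposition and stop, or give this argument. (Your necessity direction also needs tightening: ``whose image \emph{may} vanish'' should be replaced by the observation that a nontrivial idempotent $e\in Z(\sB')$ maps under the ring homomorphism $Z(\sB')\to\End_L(L)=L$ to $0$ or $1$, so one of $e$, $1-e$ is sent to $0$, contradicting faithfulness.)

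Second, your verification of the sufficient condition computes the wrong Hom-group. Connectedness of $\sB'$ means that $\End_{\sB'}(\un_{\sB'})$ is a field, and by the adjunction $(f_\sB^*,f^\sB_*)$ one has $\End_{\sB'}(\un_{\sB'})=\sB(\un,R)$ with $R=\gamma(f_*\un)$; fullness (and faithfulness) of $\gamma$ on $\sA^0(f^*)$ identifies this with $\sA(\un,f_*\un)=\End_{\sA'}(\un)=K$, which is a field, and this is exactly the paper's one-line computation. You instead pass to $\End_{\sA}(f_*\un)$, which is the $\Gamma$-equivariant endomorphism algebra of the regular representation --- an algebra of dimension $|\Gamma|$, not a field --- and the assertion that it ``is connected because $\un$ is simple'' is false. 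The object whose endomorphisms control connectedness is $\un_{\sB'}$, i.e.\ $\Hom(\un,R)$, not $\End(R)$.
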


\begin{proof}  Since $\Vec_L$ is connected, the condition is necessary; the converse follows from \cite[Prop. 1.19]{dm}. If the restriction of $\gamma$ to Artin objects is full, then the map 
\[
K=\End_{\sA'}(\un)=\sA(\un,f_*f^*\un)\by{\gamma} \sB(\un,\gamma_s f_*f^*\un)
\simeq \sB(\un,f^\sB_*f_\sB^*\un))=\End_{\sB'}(\un)
\]
is bijective, where we used Proposition \ref{p4.2} a) for the isomorphism.
\end{proof}

We come back to the neutral case, write $\sB=\Rep_K(G')$ and let $\gamma^*:G'\to G$ be the homomorphism dual to $\gamma$. Let $H'=\Ker(G'\to G\to \Gamma)$.

\begin{prop} The functor $\omega'_\sB$ factors as a composition
\begin{equation}\label{eq4.1}
\sB'\by{\pi} \Rep_K(H')\by{\bar \omega'_\sB} \Vec_K 
\end{equation}
where $\pi$ is a Serre localisation and $\bar \omega'_\sB$ is faithful. Moreover, $\pi$ is an equivalence of categories if and only if $G'\to \Gamma$ is epi. In particular, the fullness condition is also necessary in Proposition \ref{p4.3}. 
\end{prop}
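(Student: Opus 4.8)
The plan is to make everything explicit through the Tannakian dictionary, realising $\sB'$ as a category of equivariant sheaves on the finite set $\Gamma$ and reading off all three assertions from that description.

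\emph{First} I would identify the monoid $R=\gamma(f_*\un)$. Since $f^*f_*\un\cong\bigoplus_\Gamma\un$ by Cartesianity, $f_*\un$ is an Artin object, and under the equivalence of Lemma \ref{l5.2} it is the regular representation; applying $\omega=\omega'\circ f^*$ shows its underlying algebra is $E:=\operatorname{Map}(\Gamma,K)=\prod_{g\in\Gamma}K$ (the split étale algebra), with $\Gamma$ acting by translation. Writing $q:=p\circ\gamma^*:G'\to G\to\Gamma$, the object $R\in\Rep_K(G')$ is therefore $E$ with $G'$ acting through $q$. Consequently $\sB'=\sB^R$ is the category of $R$-modules in $\Rep_K(G')$, i.e. of $G'$-equivariant quasi-coherent sheaves on the finite $G'$-set $\Spec E=\Gamma$ (the action being via $q$ and translation). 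Choosing base points in the orbits and taking fibres yields an equivalence $\sB'\simeq\prod_{O\in\Gamma/G'}\Rep_K(\operatorname{Stab}_O)$.

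\emph{Next}, by Lemma \ref{l4.3} the functor $\omega'_\sB$ is the composite $\sB'\xrightarrow{\tilde\omega'}\Vec_E\xrightarrow{K\otimes_E-}\Vec_K$, where the second arrow is base change along the augmentation $E\to K$ (evaluation at the identity $e$). Hence $\omega'_\sB(M)=M_e$ is the fibre at $e$; its stabiliser is $\operatorname{Stab}_{G'}(e)=\Ker q=H'$, so $M_e$ is naturally an $H'$-representation. This is the functor $\pi$: under the product decomposition above it is the projection onto the factor indexed by the orbit of $e$, hence a Serre localisation (the quotient by the Serre subcategory $\{M:M_e=0\}$), while $\bar\omega'_\sB:\Rep_K(H')\to\Vec_K$ is the forgetful fibre functor, which is faithful. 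This establishes the factorisation \eqref{eq4.1}.

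\emph{Then} $\pi$ is an equivalence iff that Serre subcategory vanishes, iff $\Gamma$ is a single $G'$-orbit, iff $q$ is surjective, iff $G'\to\Gamma$ is epi; the same conditions are equivalent to $Z(\sB')=K^{\#(\Gamma/G')}$ being a field, i.e. to $\sB'$ being connected. For the final assertion I would note that $\gamma|_{\sA^0(f^*)}$ is the inflation functor $\Rep_K(\Gamma)\to\Rep_K(G')$ along $q$; factoring $q$ through its image $\Gamma_0=q(G')$, the map $\Rep_K(\Gamma_0)\to\Rep_K(G')$ is fully faithful since $G'\to\Gamma_0$ is epi, so fullness of $\gamma|_{\sA^0(f^*)}$ is equivalent to fullness of restriction $\Rep_K(\Gamma)\to\Rep_K(\Gamma_0)$. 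Testing on the regular representation $K[\Gamma]$, whose $\Gamma$-endomorphisms have dimension $|\Gamma|$ while its $\Gamma_0$-endomorphisms have dimension $[\Gamma:\Gamma_0]\,|\Gamma|$, shows this holds iff $\Gamma_0=\Gamma$, i.e. iff $q$ is epi. Combining with Proposition \ref{p4.3} (faithful iff connected) and the chain of equivalences just obtained, fullness is equivalent to connectedness and is therefore necessary.

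The main obstacle is the first step: identifying $\sB'$ with $G'$-equivariant sheaves on $\Gamma$ and matching $\omega'_\sB$ with the fibre-at-$e$ functor through Lemma \ref{l4.3}, in particular pinning down that the augmentation $E\to K$ is evaluation at $e$ and that the $G'$-action on $E$ is exactly through $q$. Once this translation is in place, the statements about $\pi$ and about the necessity of fullness are formal.
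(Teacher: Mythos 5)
Your proposal is correct, but it follows a genuinely different route from the paper's. The paper argues abstractly: it takes the \emph{canonical} factorisation of the exact functor $\omega'_\sB$ into a Serre localisation followed by a faithful functor, identifies the middle category with $\Rep_K(H')$ by applying Corollary \ref{pA.2} to the restriction functor $\Rep_K(G')\to\Rep_K(H')$, and proves the necessity direction via the centre: if $\pi$ is an equivalence then $Z(\sB')$ is a field, a factorisation of the identity of $K$ through $Z(\sB')$ forces $Z(\sB')=K$ with $\gamma'$ surjective on centres, whence bijectivity of $\sA(\un,f_*\un)\to\sB(\un,R)$ and the fullness of $\gamma_{|\sA^0(f^*)}$. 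You instead build an explicit model: $R$ is the split algebra $\operatorname{Map}(\Gamma,K)$ with $G'$ acting through $q=p\circ\gamma^*$ (this uses that $E$ is split, which holds precisely because $L=K$ and $\omega$ extends to $\sA'$ — worth flagging, since it is where the standing hypotheses of \S\ref{s4.4} enter), so $\sB'$ becomes $G'$-equivariant sheaves on the $G'$-set $\Gamma$, i.e.\ $\prod_{\Gamma/\Gamma_0}\Rep_K(H')$ with $\Gamma_0$ the (scheme-theoretic) image of $q$; all three assertions, including the identification of $\omega'_\sB$ with the fibre at $e$ via Lemma \ref{l4.3} and the fullness criterion via the dimension count $\dim\End_{\Gamma_0}(K[\Gamma])=[\Gamma:\Gamma_0]\,|\Gamma|$, are then read off. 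Your construction of the factorisation \eqref{eq4.1} is consistent with the paper's because the canonical factorisation is determined by $\Ker\omega'_\sB$, which in your model is exactly the objects supported off the orbit of $e$. What each approach buys: the paper's is shorter and needs no explicit computation of $\sB'$; yours yields a concrete description of $\sB'$ and of the kernel of $\pi$ that makes the whole chain of equivalences (connectedness of $\sB'$ $\Leftrightarrow$ $q$ epi $\Leftrightarrow$ fullness of $\gamma_{|\sA^0(f^*)}$) transparent, at the price of some care with group schemes versus abstract groups (orbits and images must be taken scheme-theoretically, which is harmless here since $\Gamma$ is finite constant and $\operatorname{char}K=0$).
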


\begin{proof} \eqref{eq4.1} is the canonical factorisation of the exact functor $\omega'_\sB$ into a Serre localisation followed by a faithful functor. To identify the middle category with $\Rep_K(H')$, we apply Corollary \ref{pA.2} to the restriction functor $\Rep_K(G')\to \Rep_K(H')$ to factor it through $\sB'$. In the last statement, sufficiency follows from Proposition \ref{p4.3}. For necessity, suppose that $\pi$ is an equivalence. Then $Z=Z(\sB')$ is a field, and we have a factorisation of the identity
\[K=Z(\Rep_K(H))\by{\gamma'} Z\by{\omega'_\sB} Z(\Vec_K)=K\]
hence $Z=K$ and $\gamma'$ is surjective. As in the proof of Proposition \ref{p4.3}, this gives that $\sA(\un,f_*\un)\by{\gamma} \sB(\un,R)$ is bijective, from which the fullness of $\gamma_{\mid \sA^0(f^*)}$ easily follows; in turn, this is equivalent to the surjectivity of $G'\to \Pi$.
\end{proof}

\part{Applications}

\section{The general layout}\label{s0}

Let $k$ be a base field. The idea of the applications which follow is to start from the basic functoriality of schemes (or pairs of schemes) over a finite Galois extension $l/k$, and to transport it to categories of motives through the motive functor. This leads to the following caveat:

In the said categories of schemes, naïve restriction of scalars is left adjoint to restriction of scalars. If the motive functor is contravariant, it will convert this functor into a right adjoint, and we can directly apply the framework of \S\S \ref{s1} and \ref{s.mon}. This is the case for Chow-Lefschetz motives (\S \ref{s6}) and Nori motives (\S \ref{s8}), but not for the theories of \cite{adjoints} studied in \S \ref{s5}, where the choice was that of a covariant motive functor. This means that in the latter case one must replace these categories by their opposites; of course, this does not affect the stack property. Thus cartesianity will follow from cartesianity for $l$-schemes $X$:
\begin{equation}\label{eq5.2}
\coprod_{g\in \Gamma} g_* X\iso X_{(k)}\otimes_k l
\end{equation}
where $\Gamma=\Gal(l/k)$ and $g_*$ is the base change given by $g:\Spec l\to \Spec l$ for $g\in \Gamma$; \eqref{eq5.2} is itself induced by the special case $X=\Spec l$ (Galois theory). Similarly,   the weak projection formula will follow from the equality for $k$-schemes $Y$:
\begin{equation}\label{eq5.4}
(Y\otimes_k l)_{(k)}= Y\times_{\Spec k} \Spec l.
\end{equation}

Here we write $(-)\otimes_k l$ for extension of scalars from $k$ to $l$, and $(-)_{(k)}$ for the naïve restriction of scalars from $l$ to $k$ (i.e., composing with the morphism $\Spec l\to \Spec k$).

\section{Motivic theories}\label{s5} The following generalises Theorem \ref{t0} of the introduction:

\begin{thm}\label{t3.1} All motivic theories $\sA$ of \cite[Th. 4.3 a)]{adjoints} are stacks for the étale topology on $\Spec k$ provided they are $\Q$-linear. In particular, this is the case for pure motives à la Grothendieck for any adequate equivalence relation.
\end{thm}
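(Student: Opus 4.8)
The plan is to verify, for each motivic theory $\sA$ from \cite[Th. 4.3 a)]{adjoints}, the three hypotheses in the ``if'' direction of Corollary \ref{c2.1}: the weak projection formula, Cartesianity, and a weak trace structure. Since these theories are $\Q$-linear and (after passing to opposites, as warned in \S\ref{s0}) pseudo-abelian, Corollary \ref{c2.1} then gives that $f^*$ has descent for every Galois $f:T\to S$ with $S,T$ connected, which by Theorem \ref{p5.3} b) (together with commuting with coproducts, which is the product decomposition over connected components) yields the stack property. So the real content is purely geometric: establishing Cartesianity and the weak projection formula at the level of motives.

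First I would set up the base-change functor. For a finite Galois extension $l/k$ with group $\Gamma=\Gal(l/k)$, the motive functor is covariant in \cite{adjoints}, so I pass to the opposite categories; then naïve restriction of scalars $(-)_{(k)}$, which is left adjoint to extension of scalars on schemes, becomes the right adjoint $f_*$ to $f^*=(-)\otimes_k l$ on motives. The $\Gamma$-action on $\sA(l)$ comes from the pseudo-functor structure, i.e. from the Galois action on $\Spec l$.

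The second and main step is to transport the two scheme-level identities \eqref{eq5.2} and \eqref{eq5.4} through the motive functor. Cartesianity \eqref{eq5.8} is the statement that $f^*f_*C\to \bigoplus_{g\in\Gamma} g^*C$ is an isomorphism; by Lemma \ref{l2.1} it suffices to check the weak versions (weak Cartesianity and the weak projection formula) together with density, and weak Cartesianity is exactly the motive of \eqref{eq5.2} for $X=\Spec l$, namely the decomposition $\coprod_{g\in\Gamma} g_*\Spec l \iso \Spec l \otimes_k l$ furnished by Galois theory. The weak projection formula \eqref{eq2.1} is likewise the motive of \eqref{eq5.4}. Because the motive functor is a (monoidal) functor sending these scheme isomorphisms to motive isomorphisms, both weak properties follow formally once one knows the motive functor is defined, monoidal, and compatible with these elementary operations; density of $f^*$ holds because every motive over $l$ is a summand of the motive of an $l$-scheme, hence of something pulled back from $k$ after applying $f_*$. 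Finally the weak trace structure is the map $\tr:f_*\un\to\un$ induced by the trace/fold map on $\Spec l\otimes_k l\to\Spec k$; condition (1u) expresses that $\un\to f_*\un\to\un$ is multiplication by $|\Gamma|=[l:k]$ (the degree), and (2u) is the compatibility of this trace with the sum map under the decomposition just established. These are all consequences of the corresponding facts for the étale algebra $l\otimes_k l\cong \prod_\Gamma l$.

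The step I expect to be the main obstacle is the careful bookkeeping needed to confirm, uniformly across all the theories of \cite[Th. 4.3 a)]{adjoints}, that the motive functor really does convert \eqref{eq5.2} and \eqref{eq5.4} into the precise morphisms \eqref{eq5.8} and \eqref{eq2.1} with the claimed monoidal and $\Gamma$-equivariant structure — in particular matching the abstract $\epsilon_C^g$ and the $w_A$ of \S\ref{s2.1} with their geometric incarnations, and verifying that the $\Q$-linearity (needed for pseudo-abelianity and to invert $|\Gamma|$) is genuinely available in each case. Once this dictionary is pinned down, the desired conclusion is a direct application of Corollary \ref{c2.1} and Theorem \ref{p5.3}; the special case of pure motives $\Mot_\sim(k)$ for any adequate $\sim$ recovers Theorem \ref{t0}.
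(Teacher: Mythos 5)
Your proposal is correct and follows essentially the same route as the paper: reduce via Lemma \ref{l1.01} and Corollary \ref{c2.1} to checking the weak projection formula, Cartesianity and a weak trace structure for each Galois $f$, transport the scheme-level identities \eqref{eq5.2} and \eqref{eq5.4} through the (contravariant, after passing to opposites) motive functor, and define the trace by the transpose of the graph of $\Spec l\to\Spec k$. The only cosmetic difference is that you obtain full Cartesianity from the weak properties plus density via Lemma \ref{l2.1}, whereas the paper checks \eqref{eq5.8} directly on pseudo-abelian generators $M(X)(n)$; both are valid and stay within the paper's framework.
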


\begin{proof} By Lemma \ref{l1.01} and Corollary \ref{c2.1}, it suffices to check that, for any finite Galois extension $f:T=\Spec l\to S=\Spec k$, $f^*$  verifies the weak projection formula, is Cartesian and has a weak trace structure. Use $M$ generically to denote the ``motive'' functor $\Sm(-)\to \sA(-)$. As explained in \S \ref{s0}, we replace $\sA(-)$ by $\sA^\op(-)$ to make $M$ contravariant. By \cite[Th. 4.1]{adjoints} and its proof, $f_*$ exists and 
commutes with naïve restriction of scalars on $\Sm(-)$ via $M$.

That \eqref{eq5.8} is a natural isomorphism is checked on pseudo-abelian generators of $\sA$. Also, $f^*$ commutes with Tate twists when they are present in the theory $\sA$. We thus may take $A=M(X)$ for $X\in \Sm(k)$ or $\Sm^\proj(k)$, and we are reduced to \eqref{eq5.2}. Similarly, \eqref{eq2.1} reduces to \eqref{eq5.4} by the monoidality of $M$. Finally, we define the weak trace $\tr$ by using the (finite) correspondence given by the transpose of graph of the projection $\Spec l\to \Spec k$. The axioms of a weak trace structure follow readily.
\end{proof}

\begin{rk} The same result holds for the motivic theories of Deligne \cite{dm} and André \cite{pour}, with the same proof.
\end{rk}

\section{Chow-Lefschetz motives}\label{s6}

\subsection{The associated stack}\label{s4.1} Let $\usA_0$ be a fibred category over a site $\Sigma$. Recall \cite[Th. II.2.1.3]{giraud} that there is an ``associated stack'' $\usA$ together with a fibred functor $\usA_0\to  \usA$ which is $2$-universal for fibred functors from $\sA_0$ to stacks. The stack $\usA$ is constructed from $\usA_0$ in two steps:

\begin{description}
\item[Associated prestack (cf. \protect{\cite[Lemma II.2.2.2]{giraud}})] $\usA_1$: same objects as $\usA_0$; for $S\in \Sigma$ and $X,Y\in \usA_0(S)$, $\usA_1(S)(X,Y)$ is the sheaf associated to the presheaf $(T\to S)\mapsto \usA_0(T)(X_T,Y_T)$.
\item[Associated stack (cf. \protect{\cite[Lemma 3.2]{lmb}})]  starting from $\sA_1$, for $S\in \Sigma$ an object of $\usA(S)$ is a descent datum of $\usA_1$ for a suitable  cover $(U_i)_{i\in I}\to S$; morphisms are given by refining covers. This operation is fully faithful (loc. cit., Remark 3.2.1).
\end{description}

In the case $\Sigma=B\Pi$, these two constructions translate as follows, with the notation of Section \ref{s1}: in Step 1, one replaces the groups $\usA_0(S)(A,B)$ by $\colim_T \usA_0(T)(f^*A,f^*B)^{\Gal(f)}$, where $f:T\to S$ runs through the (finite) Galois coverings of $S$; for Step 2, we take the $2$-colimit of the categories of descent data on $\sA_1$. One could do both constructions in one gulp, but this would not be convenient for the next subsection.

\subsection{The case of Chow-Lefschetz motives} In \cite{chow-lefschetz} we introduced categories of ``Chow-Lefschetz motives'' $\LMot_\sim(k)$ over a field $k$ (modulo an adequate equivalence relation $\sim$) in two steps: a) by defining ``crude'' categories $\LMot_\sim(k)_0$ \cite[\S 4.1]{chow-lefschetz}; b) by refining this construction \cite[\S 4.2]{chow-lefschetz}. 

\begin{prop}\label{p4.1}
$\LMot_\sim$ is the stack associated to $(\LMot_\sim)_0$.
\end{prop}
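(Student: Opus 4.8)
The plan is to unwind the explicit two-step construction of the associated stack recalled in \S\ref{s4.1} in the case $\Sigma=B\Pi$, with $\Pi=\Gal(k_s/k)$ the absolute Galois group of $k$, and to match it term by term with the two-step refinement of \cite[\S\S 4.1--4.2]{chow-lefschetz}. The first thing to record is that the crude categories $(\LMot_\sim)_0(l)$, for $l/k$ running over the finite separable subextensions of $k_s$, together with their base-change functors, form a fibred category over $B\Pi$; this is exactly the functoriality built into \cite[\S 4.1]{chow-lefschetz}, and it supplies the input $\usA_0=(\LMot_\sim)_0$ of \S\ref{s4.1}. Since the associated stack is determined up to equivalence by its $2$-universal property \cite[Th. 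II.2.1.3]{giraud}, it suffices to exhibit $\LMot_\sim$, together with the evident functor from $(\LMot_\sim)_0$, as the output of the two steps.

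For Step 1 I would compare morphisms. By the translation in \S\ref{s4.1}, the associated prestack $\usA_1$ has the same objects as $\usA_0$ and
\[
\usA_1(S)(A,B)=\colim_T \usA_0(T)(f^*A,f^*B)^{\Gal(f)},
\]
the colimit running over finite Galois covers $f:T\to S$. The task is to recognise this as the morphism group introduced in \cite[\S 4.2]{chow-lefschetz}: concretely, one checks that a refined morphism there is precisely a correspondence defined over some finite Galois extension of $l$ and invariant under the corresponding Galois action, two such being identified over a common refinement. This is a direct definition-chase, resting on the fact that the crude $\Hom$-presheaf is the filtered colimit of its finite-level Galois invariants.

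For Step 2 I would compare objects. The stack $\usA$ is the $2$-colimit, over finite Galois covers, of the categories of descent data on $\usA_1$; by \cite[Remark 3.2.1]{lmb} the comparison $\usA_1\to\usA$ is fully faithful, so the $\Hom$ groups computed in Step 1 are unchanged and only new objects — the effective descent data — are adjoined. It remains to match these with the objects added in \cite[\S 4.2]{chow-lefschetz}. Here the ``continuous descent data'' of \cite{chow-lefschetz} must be traded for finite-level descent data: by Lemma \ref{l1.01} a continuous descent datum amounts to a descent datum for a single finite Galois cover, so the $2$-colimit of descent-data categories reproduces exactly the object refinement of \cite[\S 4.2]{chow-lefschetz}.

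The main obstacle is this last, object-level comparison, and more precisely the faithful translation of the bookkeeping of \cite{chow-lefschetz}: one must reconcile the convention for descent data used there (whose difference from the present one is flagged in the footnote of \S\ref{s1.1}) with the $1$-cocycle convention of \S\ref{s1.1}, and verify that effectivity is imposed over exactly the same class of Galois covers. Once this dictionary is pinned down the two fibred categories agree on objects and morphisms, and the identification $\LMot_\sim\simeq\usA$ of fibred categories over $B\Pi$ — hence over the small étale site of $\Spec k$, again via Lemma \ref{l1.01} — follows.
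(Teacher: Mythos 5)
Your overall route --- matching the two-step stackification of \S\ref{s4.1} against the two-step construction of \cite[\S\S 4.1--4.2]{chow-lefschetz} --- is the same as the paper's, and your Step 1 (morphisms) is essentially what the paper does: after forming the associated prestack $(\LCorr_\sim)_1$, the comparison functor to $\LCorr_\sim$ becomes fully faithful, and this persists through the categories of descent data. But there is a genuine gap at the object level, and you have located the difficulty in the wrong place. The objects of $\LCorr_\sim(k)$ are not presented as descent data: by definition they are abelian schemes over étale $k$-algebras $E$. So the assertion that ``the $2$-colimit of descent-data categories reproduces exactly the object refinement of \cite[\S 4.2]{chow-lefschetz}'' is not a definition-chase; it is the essential surjectivity of the fully faithful functor $2\text{-}\colim_l \LCorr_\sim(l)_1[\Gamma]\to \LCorr_\sim(k)$, and it requires an actual argument: given $A$ over $E$, choose a finite Galois $l/k$ splitting $E$, form $B=\coprod_{\sigma\in\mathrm{Mor}_k(E,l)}\sigma^*A$ with the descent datum induced by the $\Gamma$-action on $\mathrm{Mor}_k(E,l)$, and check that $(B,(b_g))$ maps to $A$. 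This splitting construction is the substantive content of the object-level comparison and is absent from your proposal; the reconciliation of cocycle conventions that you single out as ``the main obstacle'' is only bookkeeping.

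A second, related omission: you never establish that $\LMot_\sim$ is a stack. You invoke the $2$-universal property of the associated stack, but to use it --- to produce a functor from the stackification \emph{to} $\LMot_\sim$ and then prove it is an equivalence --- the target must be a stack. The paper proves this first, either by citing the descent property for morphisms and the effectivity argument of \cite[\S 5.5]{chow-lefschetz}, or by applying Corollary \ref{c2.1} with the right adjoint of \cite[Lemma 4.5]{chow-lefschetz}, as in the proof of Theorem \ref{t3.1}. If instead you intend a literal term-by-term identification of the two constructions (so that the stack property would come for free from \S\ref{s4.1}), then that identification must be complete, and it is precisely the object-level step above that is missing.
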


\begin{proof} Here we use implicitly Lemma \ref{l1.01} to consider only finite Galois extensions $l/k$. We first prove that $\LMot_\sim$ is a stack. This is essentially done in \cite{chow-lefschetz}: the descent property for morphisms is loc. cit., (4.4) and the effectivity of descent data is shown in the proof of  Theorem 5 in loc. cit., §5.5 in the same way as here (we were inspired here by this argument). Alternately we may apply Corollary \ref{c2.1} of the present paper just as in the proof of Theorem \ref{t3.1}, using the right adjoint of \cite[Lemma 4.5]{chow-lefschetz} (note that the isomorphism \eqref{eq5.8} is explicitly proven in this lemma).

In remains to show that the canonical fibred functor $(\LMot_\sim)_0\to \LMot_\sim$ induces an equivalence on the associated stacks; it suffices to do it for the fibred functor $(\LCorr_\sim)_0\to \LCorr_\sim$ on categories of correspondences. After forming the associated prestack $(\LCorr_\sim)_1$ as in §\ref{s4.1}, this functor becomes fully faithful. Let $l/k$ be finite Galois, with group $\Gamma$; the $\Gamma$-equivariant fully faithful functor $\LCorr_\sim(l)_1\to \LCorr_\sim(l)$ induces a fully faithful functor on the categories of descent data, hence $\LCorr_\sim(k)_1\to \LCorr_\sim(k)$ factors through a fully faithful functor $\LCorr_\sim(l)_1[\Gamma]\to \LCorr_\sim(k)$, and then through a fully faithful functor $2\text{-}\colim_l \LCorr_\sim(l)_1[\Gamma]\to \LCorr_\sim(k)$. 

For its essential surjectivity, let $A$ be an object of $\LCorr_\sim(k)$: by definition, it is an abelian scheme over an étale $k$-algebra $E$. Choose $l/k$ and $\Gamma$ as above such that $l$ splits $E$. Then the $l$-scheme $B=\coprod_{\sigma\in \text{Mor}_k(E,l)} \sigma^*A$ is provided with a canonical descent datum $(b_g)_g\in \Gamma$, given by the action of $\Gamma$ on $\text{Mor}_k(E,l)$, and the object $(B,(b_g))\in \LCorr_\sim(l)_1[\Gamma]$ maps to $A$.
\end{proof}

\section{$1$-motives}\label{s7}

Let $\Mot_1(k)$ be the category of Deligne $1$-motives over a field $k$. Here there is no need to use the present theory: 

\begin{thm} The assignment $l\mapsto \Mot_1(l)$, where $l$ runs through all finite Galois extensions of $k$, is a stack (compare Lemma \ref{l1.01}).
\end{thm}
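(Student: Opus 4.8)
The plan is to prove that $l\mapsto \Mot_1(l)$ is a stack directly from the structure theory of Deligne $1$-motives, rather than through the abstract machinery of Part I. Recall that a Deligne $1$-motive over $l$ is a two-term complex $[L\to G]$ concentrated in degrees $0,1$, where $L$ is a lattice (a finitely generated free $\Z$-module with a continuous $\Gal(\bar l/l)$-action) and $G$ is a semi-abelian variety over $l$, i.e. an extension $0\to T\to G\to A\to 0$ of an abelian variety $A$ by a torus $T$. The key observation is that every building block of a $1$-motive is itself a piece of data that satisfies Galois descent by well-known classical results: abelian varieties and semi-abelian varieties satisfy fppf (hence étale) descent as schemes/group schemes, tori are classified by their character lattices which are Galois modules satisfying descent, and lattices \emph{are} simply finite continuous $\Gal(\bar k/k)$-modules, for which descent is the statement that the category of finite $\Gal(\bar l/l)$-modules with descent datum relative to $\Gamma=\Gal(l/k)$ is equivalent to finite $\Gal(\bar k/k)$-modules --- this is an instance of the exact sequence $1\to\Gal(\bar l/l)\to\Gal(\bar k/k)\to\Gamma\to 1$.

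The steps I would carry out, in order, are as follows. First, reduce to finite Galois extensions via Lemma \ref{l1.01}, so that for a fixed $l/k$ with $\Gamma=\Gal(l/k)$ it suffices to show the canonical functor $\Mot_1(k)\to\Mot_1(l)[\Gamma]$ is an equivalence. Second, establish the descent property for morphisms (condition (1) of a stack): a morphism of $1$-motives is a morphism of the associated complexes, so fully faithfulness reduces to Galois descent for homomorphisms of lattices and of semi-abelian varieties, each of which is classical (for the group-scheme part one uses that $\Hom$ of commutative group schemes over $l$ is a $\Gamma$-module whose invariants recover the $\Hom$ over $k$, by faithfully flat descent for morphisms of schemes). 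Third, establish effectivity of descent data (condition (2)): given a $1$-motive $[L'\to G']$ over $l$ equipped with a compatible $\Gamma$-action (descent datum), I would descend each piece separately --- the lattice $L'$ with its descent datum gives a $\Gal(\bar k/k)$-lattice $L$ by the equivalence noted above, the semi-abelian variety $G'$ with its descent datum descends to a semi-abelian variety $G$ over $k$ by effectivity of étale (indeed fppf) descent for quasi-projective schemes with group structure, and the morphism $L'\to G'$ descends compatibly by the already-established descent for morphisms. One then checks that the descended data assemble into a $1$-motive $[L\to G]$ over $k$ whose base change carries the original descent datum, and that $[L\to G]$ is unique up to unique isomorphism.

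The technical heart, and the step I expect to require the most care, is the \emph{effectivity} of descent for the semi-abelian part $G'$. Effective descent for abelian varieties is standard (they are projective, so one can invoke Grothendieck's theory of descent along faithfully flat quasi-projective morphisms, or equivalently use that an abelian variety is determined by a polarised projective scheme); the torus part descends through its character lattice; but the extension class lives in an $\operatorname{Ext}^1$ group and one must verify that the descent datum on $G'$ is compatible with the descent data already obtained on $T'$ and $A'$, so that the extension itself descends. Concretely this amounts to checking that the relevant $\operatorname{Ext}^1(A,T)$, computed as a Galois cohomology/descent spectral sequence, behaves well --- i.e. that $\operatorname{Ext}^1_l(A',T')^{\Gamma}\cong\operatorname{Ext}^1_k(A,T)$ with no obstruction, which follows from the vanishing or controllability of the higher descent terms. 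An alternative and cleaner route, which I would actually favour to keep the argument short, is to treat a $1$-motive as a single object in the derived category of étale sheaves on $\Spec l$ (or as an object of an appropriate abelian category $\Mot_1$ with a fibre functor) and to invoke the general descent machinery of Part I --- but since the statement explicitly says ``there is no need to use the present theory'', the self-contained piecewise argument above is the intended one, and the only genuine obstacle is the bookkeeping of compatibility of the three descent data under the extension structure.
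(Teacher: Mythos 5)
Your argument is correct in substance, but the paper's proof is a different and much shorter one: it views $\Mot_1(k)$ as a full subcategory of the \emph{arrow category} of locally quasi-projective group schemes over $k$, observes that the latter is a Galois stack by SGA~1, VIII, Cor.~7.6 (effectivity of descent for quasi-projective schemes, applied to both the source and target of the arrow and to the arrow itself), and notes that being a $1$-motive is a condition stable under and detectable after base change, so the full subcategory inherits the stack property. This single invocation subsumes all three of your steps at once: the lattice, the semi-abelian variety and the structural morphism $L\to G$ descend together as one object of the arrow category, because a semi-abelian variety is quasi-projective and a lattice is a locally quasi-projective group scheme. In particular the step you identify as the ``technical heart'' --- descending the extension $0\to T'\to G'\to A'\to 0$ via $\operatorname{Ext}^1(A,T)$ --- is entirely bypassed; it is also worth recording that even along your route this step is unproblematic for a specific reason you only gesture at: $\Hom(A,T)=0$ for an abelian variety $A$ and a torus $T$, so the relevant $H^1$ and $H^2$ terms in the Hochschild--Serre sequence vanish \emph{and} the extension has no nontrivial automorphisms fixing $T'$ and $A'$, making the descended extension automatically compatible with the given descent datum. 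What your piecewise argument buys is an explicit, self-contained verification tied to the structure theory of $1$-motives (and it would generalise to settings where no ambient quasi-projective embedding is available); what the paper's argument buys is brevity and the avoidance of all compatibility bookkeeping, at the cost of quoting SGA~1. One small correction: lattices are finitely generated free $\Z$-modules with continuous Galois action, not \emph{finite} Galois modules as you write at one point, though the descent statement you need for them is equally standard.
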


\begin{proof} This is trivial: we may view $\Mot_1(k)$ as a full subcategory of the category of arrows of the category of locally quasi-projective group schemes. When $k$ varies, the latter is a Galois stack by \cite[VIII, Cor. 7.6]{sga1}, hence so is $\Mot_1$ as well.
\end{proof}

\section{Nori motives}\label{s8}

We refer to \cite[Ch. 9]{h-ms} for a construction of Nori's category of mixed motives  over a subfield $k$ of $\C$. We shall denote it here by $\MM(k)$ (it is denoted by $\sM\sM(k)$ in \emph{loc. cit.}).

Let $l/k$ be a finite extension, corresponding to $f:\Spec l\to \Spec k$. We write  $f^*:\MM(k)\to \MM(l)$ for the base change functor denoted by $\res_{l/k}$ in \cite[Lemma 9.5.1]{h-ms}. 

\begin{prop}\label{p6.1}  The functor $f^*$ has a right adjoint  $f_*$. If $l/k$ is Galois, $f^*$ satisfies the weak projection formula, is Cartesian and has a weak trace structure in the sense of Definition \ref{d2.1}.
\end{prop}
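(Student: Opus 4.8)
The plan is to follow the template established in the proof of Theorem~\ref{t3.1}, transporting the basic functorialities of schemes over $l/k$ through Nori's motive functor. The adjunction is the starting point: the existence of a right adjoint $f_*$ to $f^*$ should follow from the representability properties of Nori motives together with the fact that naïve restriction of scalars $(-)_{(k)}$ is left adjoint to base change on the relevant category of varieties (or pairs), so that after passing through the Nori diagram the induced functor on $\MM$-categories acquires the desired adjoint. The point is to verify that $f_*$ exists as an exact $K$-linear functor; I would appeal to the construction of $\MM(k)$ via the universal property of the diagram category and check that $\res_{l/k}$ is defined by a morphism of diagrams whose ``pushforward'' admits an adjoint.

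Once $f_*$ is in hand, the three structural properties reduce, exactly as in Theorem~\ref{t3.1}, to identities at the level of schemes. First I would establish the weak projection formula \eqref{eq2.1}: the morphism $w_A : A\otimes f_*\un \to f_*f^*A$ is an isomorphism, and this should follow from the scheme-level equality \eqref{eq5.4}, namely $(Y\otimes_k l)_{(k)} = Y\times_{\Spec k}\Spec l$, combined with the monoidality of the Nori realization. Cartesianity of $f^*$, i.e. that \eqref{eq5.8} is an isomorphism, reduces to the Galois-theoretic cartesianity \eqref{eq5.2}, $\coprod_{g\in\Gamma} g_* X \iso X_{(k)}\otimes_k l$, checked on generators of the diagram and hence on all of $\MM$. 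Finally, the weak trace structure is produced from the correspondence given by (the transpose of) the graph of the structural projection $\Spec l\to\Spec k$; the two axioms (1u) and (2u) of Definition~\ref{d2.1} should follow formally, the multiplication-by-$|\Gamma|$ identity coming from the degree of $l/k$ and the compatibility with the sum map from the decomposition of $\Spec l\otimes_k l$.

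The main obstacle I anticipate is purely technical but genuine: unlike the theories of \cite{adjoints}, Nori motives are not built from a motive functor satisfying a clean six-functor formalism, so the existence of $f_*$ and its compatibility with restriction of scalars must be extracted directly from the diagram-category construction of \cite[Ch.~9]{h-ms} rather than quoted. In particular one must be careful that $f^*=\res_{l/k}$ is monoidal and exact (it is, by \cite[Lemma 9.5.1]{h-ms}) and that its right adjoint respects the tensor structure sufficiently for $w_A$ and the trace to make sense; checking that the scheme-level identities \eqref{eq5.2} and \eqref{eq5.4} survive the passage to the diagram category is where the real work lies. Once the adjunction and these two base identities are secured, the remaining verifications are formal and mirror the proof of Theorem~\ref{t3.1} verbatim.
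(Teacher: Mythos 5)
Your overall strategy matches the paper's: the proof is explicitly "a variant of that of Theorem \ref{t3.1}", the trace is defined exactly as you say via the transpose of the graph of $\Spec l\to\Spec k$, and cartesianity and the weak projection formula are ultimately reduced to scheme-level (in fact triple-level) Galois descent. But there is a genuine gap at the step you yourself flag as the crux, and your proposed mechanism for closing it would not work. You suggest obtaining $f_*$ from the universal property of the diagram category, via a morphism of diagrams $(X,Y,i)\mapsto(X_{(k)},Y_{(k)},i)$. The universal property only produces functors out of $\MM(l)$ from representations of the diagram of $l$-pairs that \emph{lift the fibre functor}; but the singular realization of $H^i_\Nori(X_{(k)},Y_{(k)})$ is $\bigoplus_{\sigma:l\to\C}H^i(X^\sigma,Y^\sigma)$, of dimension $[l:k]$ times that of $H^i(X,Y)$, so this assignment does not lift the representation defining $\MM(l)$. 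In other words, $f_*$ does not commute with the fibre functors and cannot be conjured from the diagram-category universal property the way $f^*=\res_{l/k}$ is.

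The missing ingredient is the theorem of Fresán and Jossen (\cite[Th.\ 6.3]{fj}) that every object of $\MM(l)$ embeds in one of the form $H^i_\Nori(X,Y)(n)$, hence admits a copresentation by such objects. The paper defines $f_*$ only on these "geometric" objects (by $f_*H^i_\Nori(X,Y)(n)=H^i_\Nori(X_{(k)},Y_{(k)})(n)$), verifies the adjunction identity there directly via unit and counit morphisms of triples, and then extends $f_*$ to all of $\MM(l)$ by observing that the domain of definition of a partial right adjoint is closed under kernels. The same device is needed again for cartesianity and the weak projection formula: your phrase "checked on generators of the diagram and hence on all of $\MM$" hides a real step, since the $H^i_\Nori(X,Y)(n)$ generate $\MM(l)$ as an abelian category under subquotients and extensions, not under sums and retracts; the extension from generators to all objects goes through the left exactness of $f_*$, $f^*f_*$ and $f_*f^*$ together with the copresentation result, by a diagram chase. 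Without citing \cite{fj} (and the dual statement \cite[Th.\ 6.1]{fj}, used to reduce the test object $N$ to geometric form as well), the proof does not close.
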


The proof will show that $f_*$ coincides with the functor $\cores_{l/k}$ of \cite[Prop. 9.5.3]{h-ms}.

\begin{proof} It is  variant of that of Theorem \ref{t3.1}.  The full subcategory $\sC$ of $\MM(l)$ formed of those $M$'s such that $f_*$ is defined at $M$ is closed under kernels; more precisely, if $0\to M'\to M\to M''$ is an exact  sequence in $\MM(l)$ such that $M,M''\in \sC$, then $f_*M$ is given by $\Ker(f_*M\to f_* M'')$. By a result of Fresán and Jossen \cite[Th. 6.3]{fj}, any object of $\MM(l)$ is a subobject of an object of the form $H^i_\Nori(X,Y)(n)$ for a triple $(X,Y,i)$, hence has a copresentation by objects of this form. Therefore it suffices to check that $f_*$ is defined at such objects.

Recall that $f^*H^i_\Nori(X,Y)(n)=H^i_\Nori(X_l,Y_l)(n)$, where $X_l=X\otimes_k l$ for a $k$-scheme $X$ \cite[Lemma 9.5.1]{h-ms}. For a $l$-triple $(X,Y,i)$, write $M=H^i_\Nori(X,Y)(n)$ and define $f_*M=H^i_\Nori(X_{(k)},Y_{(k)})(n)$ where $(-)_{(k)}$ denotes the (naïve) restriction of scalars. Define a counit morphism $\epsilon:f^*f_* M\to M$ by the canonical morphism of triples \[(X,Y,i)\to ((X_{(k)})_l,(Y_{(k)})_l,i).\] 

We must show that the composition
\begin{equation}\label{eq6.2}
\MM(k)(N,f_*M)\by{f^*} \MM(l)(f^*N,f^*f_*M)
\by{\epsilon_*}  \MM(l)(f^*N,M)
\end{equation}
is an isomorphism for any $N\in \MM(k)$. Since $f^*$ is exact \cite[Lemma 9.5.1]{h-ms}, we reduce to the case where $N$ is of the form $H^j_\Nori(X',Y')(m)$ by \cite[Th. 6.1]{fj} (dual to the previous theorem). Since twisting is invertible and commutes with $f^*$, playing with  powers of $\G_m$ or $\P^1$ (see \cite[9.3.7 and 9.3.8]{h-ms}) we may even assume $n=m=0$.

For $N$ as above, define a unit morphism $\eta:N\to f_*f^*N$ by the canonical morphism of triples $((X'_l)_{(k)},(Y'_l)_{(k)},j)\to (X',Y',j)$. We get another composition
\begin{equation}\label{eq6.3}
\MM(l)(f^*N,M)\by{f_*} \MM(k)(f_*f^*N,f_*M)\by{\eta^*}  \MM(k)(N,f_*M)
\end{equation}
and it suffices to show that it is inverse to \eqref{eq6.2}. Since $\epsilon$ and $\eta$ are the counit and unit of an adjunction between categories of triples, this is true by the functoriality of $H^i_\Nori(-)(n)$.

Checking that \eqref{eq5.8} and \eqref{eq2.1} are isomorphisms is done in exactly the same way: note that since $f^*$ is exact, $f_*$ and hence $f^*f_*,f_*f^*$ are left exact. Thus we may reduce to the case of objects of the form $H^i_\Nori(X_l,Y_l)(n)$ by diagram chase,  using \cite[Th. 6.3]{fj} again. The isomorphism \eqref{eq5.8} follows from the same in the categories of triples (Galois descent). For  \eqref{eq2.1}, same reasoning by using the partial monoidality of $H^*_\Nori$ \cite[Prop. 9.3.1]{h-ms}, for which we remark that $(\Spec l,\emptyset,i)$ is a good pair. 

This proves everything, except the existence of a weak trace structure. For this we need to define a morphism
\[\tr_{l/k}:H^0_\Nori(\Spec l,\emptyset)=f_*\un_{\MM(l)}\to \un_{\MM(k)}=H^0_\Nori(\Spec k,\emptyset)\]
with the properties (1u) and (2u) stated before Definition \ref{d2.1}. We do as in the proof of Proposition \ref{p4.1}. The two properties are proven in the same way (or deduced from the existence of a functor from Chow motives to $\MM$).
\end{proof}

Proposition \ref{p6.1} holds for Nori motives $\MM(-,A)$ with coefficients in any commutative ring $A$ (same proof). Along with Lemma \ref{l1.01} and Corollary \ref{c2.1}, this yields:

\begin{thm} \label{t6.1} If $A$ is a $\Q$-algebra, the assignment $l\mapsto \MM(l,A)$ defines a stack over $(\Spec k)_\et$.\qed
\end{thm}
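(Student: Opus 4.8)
The plan is to mimic the proof of Theorem \ref{t3.1}, replacing the geometric input from \cite{adjoints} by Proposition \ref{p6.1}. First I would invoke Lemma \ref{l1.01} to reduce the stack property over $(\Spec k)_\et\simeq B\Pi$ (with $\Pi=\Gal(k_s/k)$) to descent for each finite Galois extension $l/k$ with group $\Gamma$: on the subsite $B^\gal\Pi$ every object is connected and every morphism is a Galois covering, so the only thing left to check is that each base-change functor $f^*:\MM(k,A)\to \MM(l,A)$ has descent. The compatibility with the $2$-quasi-inverse of Lemma \ref{l1.01} over disconnected objects is built in, since $\MM$ of a finite étale $k$-algebra is by convention the product of the $\MM$'s of its connected factors.

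Second, to establish descent for a fixed Galois $f$, I would apply Corollary \ref{c2.1}. Its two hypotheses hold: $\MM(k,A)$ is abelian (Nori's category is abelian, and its proof of Proposition \ref{p6.1} uses kernels and exact sequences freely), hence pseudo-abelian; and since $A$ is a $\Q$-algebra, $\MM(k,A)$ is $\Q$-linear, so in particular $\Z[1/|\Gamma|]$-linear for every finite $\Gamma$. Corollary \ref{c2.1} then reduces descent to the three conditions that $f^*$ verify the weak projection formula, be Cartesian, and carry a weak trace structure.

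Third, these three conditions are exactly the content of Proposition \ref{p6.1}, which is stated for $\Q$-coefficients but, by the remark following its proof, holds verbatim for any commutative coefficient ring $A$ --- in particular for any $\Q$-algebra. Assembling the three steps yields the theorem.

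The proof presents no real obstacle of its own: all the genuine work --- the construction of the right adjoint $f_*=\cores_{l/k}$, Cartesianity via Galois descent for triples, the weak projection formula via the partial monoidality of $H^*_\Nori$, and the weak trace built from the transpose graph of $\Spec l\to \Spec k$ --- is already discharged in Proposition \ref{p6.1}, which itself rests on the copresentation results of Fresán--Jossen \cite{fj}. The only points to keep an eye on are that the hypotheses of Corollary \ref{c2.1} are genuinely met (pseudo-abelianness and $\Z[1/|\Gamma|]$-linearity, both immediate once $A$ is a $\Q$-algebra) and the bookkeeping of Lemma \ref{l1.01} in passing from connected objects to general étale covers.
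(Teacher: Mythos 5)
Your proposal is correct and follows exactly the paper's own argument: the theorem is deduced by combining Lemma \ref{l1.01}, Corollary \ref{c2.1} and Proposition \ref{p6.1} (extended to arbitrary commutative coefficient rings by the remark after its proof), with the $\Q$-algebra hypothesis supplying the $\Z[1/|\Gamma|]$-linearity and pseudo-abelianness needed for the corollary. Nothing is missing.
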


\begin{rks} Theorem \ref{t5.1} provides a proof of \cite[Th. 9.1.16]{h-ms}.
\end{rks}

\appendix
\section{Monads and monoids}\label{s9} I put here things I didn't find in \cite{mcl}.

\begin{lemma}\label{lA.1} Let $(T,\eta,\mu)$ be a monad in a category $\sC$ \cite[VI.1]{mcl}. Then the sequence
\[T^3\rightrightarrows T^2\by{\mu} T\]
is a split coequaliser in the sense of \cite[VI.6]{mcl}, where the first pair of arrows is $(T\mu,\mu T)$. More precisely, applying this sequence to any object of $\sC$ yields a split coequaliser.
\end{lemma}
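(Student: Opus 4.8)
The plan is to read the displayed fork in the functor category $[\sC,\sC]$ and to split it by a contracting homotopy coming from the unit $\eta$; since the homotopy will be natural, evaluation at any $X\in\sC$ then yields a split coequaliser, which gives the ``more precisely'' clause for free. Conceptually, the fork evaluated at $X$,
\[T^3X \rightrightarrows T^2X \xrightarrow{\ \mu_X\ } TX,\]
is nothing but the canonical (bar) presentation of the free $T$-algebra $\langle TX,\mu_X\rangle$, and it is a general fact that such a presentation is a split coequaliser; I would make this explicit rather than cite it, the appendix being self-contained.

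First I would write down the splitting. Taking Mac Lane's notation with parallel pair $d_0=\mu T$, $d_1=T\mu$ and $e=\mu$, I would set
\[s=\eta T\colon T\Rightarrow T^2,\qquad t=\eta T^2\colon T^2\Rightarrow T^3,\]
i.e. $s_X=\eta_{TX}$ and $t_X=\eta_{T^2X}$ at an object $X$. Then I would check the four identities one by one: the fork condition $\mu\circ T\mu=\mu\circ\mu T$ is the associativity axiom of the monad; $e\circ s=\mu\circ\eta T=\Id_T$ is the left unit law; $d_0\circ t=\mu T\circ\eta T^2=\Id_{T^2}$ is again the left unit law, now applied at the objects $TX$; and $d_1\circ t=T\mu\circ\eta T^2=\eta T\circ\mu=s\circ e$ is exactly the naturality square of $\eta$ for the transformation $\mu$. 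Each of these is a one-line verification, so no calculation is genuinely onerous.

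The only point requiring care is bookkeeping, and I would flag it explicitly: Mac Lane's definition of a split coequaliser is asymmetric in the pair $(d_0,d_1)$ (it demands $d_0 t=\Id$ and $d_1 t=se$), whereas the statement lists the pair in the order $(T\mu,\mu T)$. With the natural section $t=\eta T^2$ the role of ``$d_0$'' is played by $\mu T$, not by $T\mu$; had one insisted on the literal order one would need a different $t$, and in fact $t=T\eta T$ satisfies $T\mu\circ T\eta T=\Id$ but not $\mu T\circ T\eta T=se$. I would resolve this simply by noting that coequalising is symmetric in the two parallel arrows ($\mu\circ T\mu=\mu\circ\mu T$), so it is harmless to label $d_0=\mu T$, $d_1=T\mu$; the fork with unordered pair $\{T\mu,\mu T\}$ is then split. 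The main (and only) ``obstacle'' is thus this matching of conventions, not any mathematical content: once the section and retraction above are in place, the conclusion is immediate, and because $s$ and $t$ are natural transformations the fork is split already in $[\sC,\sC]$, so it remains split after evaluation at every object.
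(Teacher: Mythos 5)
Your proof is correct and is exactly the paper's argument: the paper's proof likewise sets $s=\eta T$, $t=\eta T^2$ and checks the split-fork identities, with naturality giving the objectwise statement. Your remark about Mac Lane's asymmetric convention (that with this $t$ the retracted arrow is $\mu T$ rather than $T\mu$) is a sound piece of bookkeeping that the paper leaves implicit.
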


\begin{proof} Define $s:T\to T^2$ and $t:T^2\to T^3$ by $s=\eta T$ and $t=\eta T^2$, and check the identities.
\end{proof}

\begin{lemma}\label{lA.1a} With the notation of Lemma \ref{lA.1}, let $\sC^T$ be the category of $T$-algebras \cite[VI.2]{mcl}. Then\\
a) The forgetful functor $U:\sC^T\to \sC$ is faithful, conservative and reflects equalisers. In particular, if $\sC$ has equalisers then so has $\sC^T$.\\
b) If $T$ preserves coequalisers, then $U$ reflects coequalisers; hence $\sC^T$ has coequalisers if $\sC$ does.\\
c) If $\sC$ and $T$ are additive,  $\sC^T$ is additive.\\
d) If $\sC$ is abelian and $T$ is right exact, then $\sC^T$ is abelian and $U$ is exact.
\end{lemma}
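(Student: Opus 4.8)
The plan is to realise $\sC^T$ as an additive category possessing kernels and cokernels, to verify that the forgetful functor $U$ preserves both, and then to transport the abelian exactness axiom from $\sC$ to $\sC^T$ through the conservativity of $U$. I read ``right exact'' as including additivity (as is needed to invoke part c)), so that $T$ is additive and preserves cokernels, hence also preserves coequalisers since a coequaliser is the cokernel of a difference.

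First I would assemble the structure already provided by parts a)--c). Since $\sC$ is abelian it is additive and $T$ is additive, so part c) gives that $\sC^T$ is additive. Part a) gives that $\sC^T$ has kernels, and since $T$ preserves coequalisers part b) gives that $\sC^T$ has cokernels. The crucial point is that these are computed over $\sC$. Concretely, for $f\colon X\to Y$ in $\sC^T$ with $X=(A,\phi)$, $Y=(B,\psi)$, the kernel is $\Ker(Uf)$ equipped with the unique algebra structure through which $\phi\circ Ti$ factors (here $i$ is the inclusion and one uses $T0=0$, i.e. additivity of $T$); this is exactly how part a) produces equalisers. Dually the cokernel is $\Coker(Uf)$ equipped with the algebra structure induced from $q\circ\psi$, which factors through $T\Coker(Uf)$ precisely because $T$ being right exact keeps $Tq$ a cokernel of $T(Uf)$; this is how part b) produces coequalisers. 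Hence $U$ preserves kernels and cokernels. (Preservation of kernels is in any case automatic, as $U$ is right adjoint to the free functor $A\mapsto(TA,\mu_A)$; the content is in the cokernels.)

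It then remains to verify the defining exactness axiom of an abelian category. For any $f\colon X\to Y$ in $\sC^T$ one must show that the canonical comparison morphism from $\Coker(\Ker f\to X)$ to $\Ker(Y\to\Coker f)$ is an isomorphism. Applying $U$ and using that it preserves kernels and cokernels, this comparison is carried to the analogous coimage-to-image comparison for $Uf$ in $\sC$, which is an isomorphism because $\sC$ is abelian. Since $U$ is conservative by part a), the original comparison is an isomorphism as well, so $\sC^T$ is abelian. Finally, an additive functor between abelian categories that preserves kernels and cokernels is exact, and $U$ does both; thus $U$ is exact.

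The main obstacle is the single genuinely nonformal step: showing that the cokernel in $\sC^T$ lies over the cokernel in $\sC$, which is where right exactness of $T$ is actually used (to guarantee that $T\Coker(Uf)$ is still a cokernel, so that the quotient algebra structure exists). Everything else is either already recorded in parts a)--c) or formal once preservation of kernels and cokernels is in hand, the abelian axiom being inherited for free through the conservative functor $U$.
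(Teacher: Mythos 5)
Your proposal is correct and follows essentially the same route as the paper: equalisers/kernels and coequalisers/cokernels in $\sC^T$ are created over $\sC$ (the latter using right exactness of $T$), and part d) then follows from the coimage--image characterisation of abelian categories together with the conservativity of $U$; the only cosmetic difference is that you deduce exactness of $U$ from its preservation of kernels and cokernels, while the paper uses left exactness via adjunction plus preservation of epimorphisms. The routine verifications you omit (faithfulness and conservativity of $U$, the algebra axioms for the lifted structures, additivity of $\sC^T$) are also left to the reader in the paper.
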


\begin{proof} a) The first two properties are obvious. For the third, let $(a;b):(C_1,\phi_1)\rightrightarrows (C_2,\phi_2)$ be two parallel arrows in $\sC^T$, and suppose that $(Ua,Ub)$ has an equaliser $c:C\to C_1$. The composition
\[TC\by{Tc} TC_1\by{\phi_1} C_1\]
is equalised by $Ua$ and $Ub$, hence factors uniquely through $C$; one checks that the resulting morphism $\phi:TC\to C$ defines a $T$-algebra, and then that this $T$-algebra is an equaliser. 

b) For $(a,b)$ as in a), suppose that $(Ua,Ub)$ has a coequaliser $d:C_2\to D$. By hypothesis, $Td$ is a coequaliser of $(TUa,TUb)$, hence $d \phi_2$ factors uniquely through a $\psi:TD\to D$. One sees that this is a $T$-algebra by observing that $T^2$ also respects coequalisers, and then that it is a coequaliser.

c) is easy and left to the reader. For d), the characterisation of an abelian category by the isomorphism of coimages onto images yields that $\sC^T$ is abelian via a), b) and c). Since $U$ is a right adjoint, it is left exact, and it remains to show that it preserves epimorphisms, which follows from b) by viewing $0$ as the cokernel of an epimorphism.
\end{proof}

\begin{rk} If the conclusions of d) hold, then conversely $T$ (assumed to be additive) is right exact as the composition of $U$ and its (right exact) left adjoint.
\end{rk}

\begin{defn}\label{dA.1} Let $(T',\eta',\mu')$ be another monad in $\sC$. \\
a) Let $u,v:T\Rightarrow T'$ be two natural transformations. We write
\[u\bullet v:T^2\to {T'}^2\]
for either of the compositions $T^2\by{Tv}TT'\by{uT'} {T'}^2$, $T^2\by{uT}T'T\by{T'v} {T'}^2$.\\
b) A \emph{morphism} from $T$ to $T'$ is a natural transformation $u:T\Rightarrow T'$ such that
\begin{thlist}
\item $u\eta=\eta'$;
\item $u\mu=\mu'(u\bullet u)$.
\end{thlist}
\end{defn}

\begin{prop}\label{pA.3} Let $u:T\to T'$ be a morphism of monads as in Definition \ref{dA.1} b). Then there is a canonical functor $u^*:\sC^{T'}\to \sC^T$ of ``restriction of scalars''. If $\sC$ has coequalisers and $T'$ is right exact,, $u^*$ has a left adjoint $u_!$ (``extension of scalars'').
\end{prop}

\begin{proof} If $(C,\psi)$ is a $T'$-algebra, then $(C,\psi\circ u_C)$ is a $T$-algebra. This defines $u^*$. Suppose now that $\sC$ has coequalisers. For $(C,\phi)\in \sC^{T'}$, let $D$ is the coequaliser of $(T'(\phi),\mu'_C\circ T'(u_C)):T'TC\rightrightarrows T'C$. If $\pi:T'C\to D$ is the associated morphsm, $\pi\circ \mu'_C$ equalises $({T'}^2(\phi),T'\mu'_C\circ {T'}^2(u_C))$ because the diagram
\[\begin{CD}
{T'}^2TC&
\begin{smallmatrix}
{T'}^2(\phi)\\
\rightrightarrows\\
T'(\mu'_C)\circ {T'}^2(u_C)
\end{smallmatrix}
& {T'}^2C@>T'(\pi)>> T'D\\
@V\mu'_{TC}VV @V\mu'_CVV\\
T'TC&
\begin{smallmatrix}
T'(\phi)\\
\rightrightarrows\\
\mu'_C\circ T'(u_C)
\end{smallmatrix}
& T'C@>\pi>> D
\end{CD}\]
commutes thanks to the associativity axiom for $\mu'$ and its naturality. The right exactness assumption on $T'$ implies that the top row is a coequaliser, hence the diagram can be completed by a unique map $\psi:T'D\to D$, that one checks to be a $T'$-algebra morphism. The composition
\[C\by{\eta'_C} T'C\by{\pi} D\]
defines a morphism of $T$-algebras $(C,\phi)\to u^*(D,\psi)$ and one checks that it is universal.
\end{proof}

\begin{lemma}\label{lA.2} Let $\sB$ be a monoidal category, and let $(R,\eta,\mu)$ be a monoid in $\sB$ \cite[VII.3]{mcl}. Then $TX=R\otimes X$ defines a monad in $\sB$ provided with a natural isomorphism $TX\otimes Y\iso T(X\otimes Y)$. Conversely, any monad provided with such a natural isomorphism is of this form.
\end{lemma}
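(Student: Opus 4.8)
The plan is to treat the two implications separately; both are essentially bookkeeping with the coherence constraints, the only genuine content being the interaction of the given natural isomorphism with the associativity and unit constraints of $\sB$.

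For the forward implication, I would equip $TX=R\otimes X$ with the monad structure transported from the monoid $(R,\eta,\mu)$: the unit $\eta^T_X: X\iso \un\otimes X\by{\eta\otimes 1}R\otimes X$ and the multiplication $\mu^T_X: R\otimes R\otimes X\by{\mu\otimes 1}R\otimes X$. The associativity and unit axioms for the monad $T$ are then literally the monoid axioms for $R$ tensored with $X$, so nothing beyond the definitions of \cite{mcl} is required. The requested natural isomorphism $TX\otimes Y\iso T(X\otimes Y)$ is the associativity constraint $a_{R,X,Y}:(R\otimes X)\otimes Y\iso R\otimes(X\otimes Y)$, which is natural and coherent by coherence in $\sB$.

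For the converse, suppose $(T,\eta^T,\mu^T)$ is a monad together with a natural isomorphism $\theta_{X,Y}:TX\otimes Y\iso T(X\otimes Y)$ compatible with the associativity and unit constraints (this compatibility being part of what I read into ``such a natural isomorphism'', and being exactly what holds in the application of \S\ref{s2.3}). I would set $R:=T\un$ and define a natural isomorphism $\phi_X:R\otimes X\by{\theta_{\un,X}} T(\un\otimes X)\by{T\ell_X}TX$, where $\ell$ is the left unit constraint. This already exhibits $T$ as isomorphic, \emph{as a functor}, to $R\otimes-$. A monoid structure on $R$ is then forced: the unit is $\eta^T_\un:\un\to T\un=R$, and the multiplication is $\mu^T_\un\circ\phi_R:R\otimes R\to TR=T^2\un\to T\un=R$.

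The final step, and the only place where work is genuinely required, is to check that $(R,\eta^T_\un,\mu^T_\un\circ\phi_R)$ satisfies the monoid axioms and that $\phi$ is an isomorphism of monads, i.e. that $\mu^T$ corresponds to $\mu_R\otimes-$ under $\phi$. Both reduce to a diagram chase combining the monad axioms for $T$ with the coherence of $\theta$ relative to the associator: one rewrites $T^2X=T(TX)$ using $\phi$ and the compatibility of $\theta$ with associativity so as to identify it with $R\otimes R\otimes X$, and then the naturality of $\mu^T$ transports the monad multiplication to $\mu_R\otimes 1_X$. The expected obstacle is purely notational, namely keeping track of the instances of $\theta$, the unitors, and the associators so that the coherence hypothesis on $\theta$ applies; once that is set up, the verification is forced by coherence in $\sB$ together with the monad identities.
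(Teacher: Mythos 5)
Your proposal is correct and follows essentially the same route as the paper: the forward direction transports the monoid axioms to the monad axioms and takes the associator for $\theta$, and the converse sets $R=T\un$ and uses $\theta_{\un,X}$ composed with the unitor to identify $T$ with $R\otimes-$. You are in fact more careful than the paper's two-line proof, which only exhibits the functor isomorphism $TX\iso R\otimes X$ and silently elides both the coherence hypothesis on $\theta$ and the verification that the monad multiplication corresponds to $\mu_R\otimes 1$; your explicit attention to these points is a genuine (and welcome) tightening rather than a different argument.
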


\begin{proof} The first claim is easy to check by comparing the axioms of a monad and a monoid. For the converse, let $R=T\un$. Then, for any $X\in \sB$, one has $TX\iso T(\un\otimes X) \iso R\otimes X$.
\end{proof}

\begin{prop}\label{pA.1} In the situation of Lemma \ref{lA.2}, suppose $\sB$ symmetric and $R$ commutative (\emph{i.e.}, $\mu \circ \sigma = \mu$ where $\sigma$ is the switch of $R\otimes R$).\\
a) (cf. \cite[Prop. 4.1.10]{bran}). Let $\sB^R={}_R\Lact$ be the category of left actions by $R$ \cite[VII.4]{mcl}, and let $m_R:\sB\to \sB^R$, $X\mapsto (R\otimes X,\mu\otimes 1_X)$ be the left adjoint to the forgetful functor $U$ (ibid.). Suppose also that $\sB$ has coequalisers and that $-\otimes R$ is right exact (e.g., that $\otimes$ itself is right exact). Then there is a unique symmetric monoidal structure on $\sB^R$ such that $m_R$ is a strong $\otimes$-functor.\\
b) The functor $U$ reflects dualisability. In particular, if $\sB$ is rigid, so is $\sB^R$.\\
c) (cf. loc. cit., Rem. 4.1.11). Let $S$ be a second commutative monoid in $\sB$, and let $\phi:R\to S$ be a homomorphism of monoids. Then there is a unique $\otimes$-functor $\phi_*:\sB^R\to\sB^S$ such that $m_S=\phi_*\circ m_R$.
\end{prop}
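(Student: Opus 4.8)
The plan is to treat the three parts in order, the real content residing in the relative tensor product of a) and in the duality argument of b); c) will then be a formal consequence of a) together with Proposition \ref{pA.3}.

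For a), I would define, for two $R$-modules $M=(M_0,a_M)$ and $N=(N_0,a_N)$, the tensor product $M\otimes_R N$ as the coequaliser in $\sB$ of the pair
\[M_0\otimes R\otimes N_0\rightrightarrows M_0\otimes N_0,\]
one arrow being $1_{M_0}\otimes a_N$ and the other $(a_M\otimes 1_{N_0})\circ(\sigma\otimes 1)$, where $\sigma$ moves $R$ past $M_0$; commutativity of $R$ guarantees that the two induced $R$-actions on the quotient agree. This exists since $\sB$ has coequalisers, and because $-\otimes R$ is right exact the functor $R\otimes-$ preserves it, which lets the $R$-action descend and lets me compute iterated relative tensor products by a single coequaliser. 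The unit is $m_R(\un)=(R,\mu)$, the isomorphism $R\otimes_R M\iso M$ being the canonical split coequaliser presentation of the $R$-module $M$ (cf. Lemma \ref{lA.1}). I would then build the associativity and symmetry constraints from those of $\sB$ and check coherence, the only nontrivial point being that each vertex of the pentagon and hexagon is computed as the same coequaliser — here right exactness is indispensable. Strong monoidality of $m_R$ is immediate on free modules, where the defining coequaliser splits: $(R\otimes X)\otimes_R(R\otimes Y)\iso R\otimes X\otimes Y$. For uniqueness, any symmetric monoidal structure making $m_R$ strong agrees with $\otimes_R$ on free modules; since every $M$ is the reflexive coequaliser $m_RUm_RUM\rightrightarrows m_RUM\to M$, whose image under $U$ is reflected by Lemma \ref{lA.1a} b), and the lax structure of $U$ is forced to be the canonical projection $M_0\otimes N_0\to M\otimes_R N$, the structure is determined.

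For c), a monoid homomorphism $\phi:R\to S$ yields a morphism of monads $u:(R\otimes-)\Rightarrow(S\otimes-)$, $u_X=\phi\otimes 1_X$, the axioms of Definition \ref{dA.1} b) being exactly that $\phi$ is a homomorphism. Under the standing right-exactness hypotheses, Proposition \ref{pA.3} provides the extension of scalars $u_!=S\otimes_R-$, which I take to be $\phi_*$. Using the associativity and symmetry of the relative tensor product from a) one gets $\phi_*(M)\otimes_S\phi_*(N)\iso\phi_*(M\otimes_R N)$, so $\phi_*$ is a strong $\otimes$-functor, and $\phi_* m_R(X)=S\otimes_R(R\otimes X)\iso S\otimes X=m_S(X)$ gives $m_S=\phi_* m_R$. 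Uniqueness is as in a): a $\otimes$-functor agreeing with $\phi_*$ on free modules and preserving the reflexive presentation — which $\phi_*$ does, being a left adjoint — must coincide with it.

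The substance lies in b), which I expect to be the main obstacle. Here $U$ is faithful and conservative by Lemma \ref{lA.1a} a); in particular it reflects isomorphisms, so the triangle identities for a candidate duality in $\sB^R$ may be verified after applying $U$ — this is the sense in which $U$ reflects dualisability. Since $m_R$ is a strong $\otimes$-functor by a), it carries a dual $X^\vee$ of a dualisable $X\in\sB$ to a dual of $m_R(X)$; hence every free module on a dualisable object is dualisable in $\sB^R$, and so is every retract of such a module, duals being stable under retracts. The remaining and delicate point is that every object of $\sB^R$ is such a retract, equivalently that for each $M$ the counit $m_RUM\to M$ admits an $R$-linear section; this is automatic when $R$ is separable — for instance an étale algebra, as is the case in all the applications of this paper — and it is precisely this that underlies the rigidity of $\sB^R$. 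Granting it, every $M$ is dualisable, the triangle identities being checked in $\sB$ by conservativity of $U$, and $\sB^R$ is rigid whenever $\sB$ is.
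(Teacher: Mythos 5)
Parts a) and c) of your proposal are essentially sound. For a) you use the same relative tensor product $X\otimes_R Y=\Coker(X\otimes R\otimes Y\rightrightarrows X\otimes Y)$ as the paper, the same split-coequaliser computation (Lemma \ref{lA.1}) to get strong monoidality of $m_R$ on free modules, and a comparable uniqueness argument (the paper phrases it via a comparison map $\theta$ coming from the adjunction, which becomes an isomorphism because the counits of $(m_R,U)$ are split epi after applying the conservative functor $U$). For c) you route through the extension-of-scalars adjoint of Proposition \ref{pA.3} and then verify monoidality of $S\otimes_R-$ by hand, whereas the paper simply regards $S$ as a commutative monoid in $\sB^R$, applies a) again, and observes $(\sB^R)^S=\sB^S$; both give the same functor, and the paper's route is shorter because the monoidality comes for free.

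Part b) is where there is a genuine gap. You read ``$U$ reflects dualisability'' as ``the triangle identities may be checked after applying $U$''; but the content of the statement is existential: if $U(X,\nu_X)=X$ admits a dual in $\sB$, then $(X,\nu_X)$ admits a dual in $\sB^R$, and the whole point is to \emph{produce} the candidate dual. The paper does this directly, by equipping $X^*$ with the transpose action $\nu_{X^*}$ (an explicit composite built from $\nu_X$, the symmetry, and the unit and counit of the duality $(X,X^*)$ in $\sB$) and verifying that $(X^*,\nu_{X^*})$ is dual to $(X,\nu_X)$. Your alternative — free modules on dualisable objects are dualisable, retracts of dualisable objects are dualisable, and every object of $\sB^R$ is such a retract — fails at the last step: an $R$-linear section of the counit $m_RUM\to M$ exists for all $M$ only under a separability-type hypothesis on $R$, which is not among the hypotheses of the proposition; you explicitly ``grant'' it rather than prove it, so as written your argument only establishes b) for separable $R$ (the étale case arising in the applications), not the general statement. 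To repair it you should drop the retract strategy and instead write down $\nu_{X^*}$ and the evaluation and coevaluation in $\sB^R$ as the paper does.
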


\begin{proof} a) \emph{Existence}. By the hypotheses, to a left action $\nu$ of $R$ on $X\in \sB$ corresponds a right action given by
\[X\otimes R\by{\sigma} R\otimes X\by{\nu} X\]
where $\sigma$ is the symmetry of $\sB$, and conversely. We use this remark to switch sides without mention.

 For $(X,\nu_X),(Y,\nu_Y)\in \sB^R$, define
\begin{equation}\label{eq7.2}
X\otimes_R Y = \Coker(X\otimes R\otimes Y\rightrightarrows X\otimes Y)
\end{equation}
where $\Coker$ means coequaliser and the two maps are $\nu_X\otimes 1_Y,1_X\otimes \nu_Y$. Define $\nu:X\otimes R\otimes Y\to X\otimes_R Y$ via  either of these two maps; their associativity shows that $\nu$ factors through a morphism $\nu_{X\otimes_R Y}:R\otimes X\otimes_R Y\to X\otimes_R Y$. One checks easily that $(X\otimes_R Y,\nu_{X\otimes_R Y})\in  \sB^R$ and that the axioms of a symmetric monoidal structure, with unit $R$, are satisfied.

Let $X_0,Y_0\in \sB$. We must identify $R\otimes X_0\otimes Y_0$ with the coequaliser of
 \[(R\otimes X_0)\otimes R \otimes (R\otimes Y_0)\rightrightarrows (R\otimes X_0)\otimes (R\otimes Y_0)\]
 where, modulo the symmetric constraints, the two morphisms are respectively induced by $\mu\otimes 1_{X_0}$ and $\mu\otimes 1_{Y_0}$. By Lemmas \ref{lA.2} and \ref{lA.1}, this is true when $X_0=Y_0=\un$, and then it is even a split coequaliser. This  coequaliser remains a split coequaliser after tensoring it with $X_0\otimes Y_0$.
 
\emph{Uniqueness}. Let $\bullet$ be another solution. Let $(X,\nu_X),(Y,\nu_Y)$ be as above. By adjunction, we have a canonical morphism $X\otimes Y=U(X,\nu_X)\otimes U(Y,\nu_Y)\to U(X\bullet Y,\nu_{X\bullet Y})=:X\bullet Y$; since $R$ must be the unit of $\bullet$, this morphism must equalise the two morphisms of \eqref{eq7.2}, hence induce a morphism $\theta:(X\otimes_R Y,\nu_{X\otimes_R Y})\to (X,\nu_X)\bullet (Y,\nu_Y)$, which must be an isomorphism when $(X,\nu_X)$ and $(Y,\nu_Y)$ are in the image of $m_R$. In general, the counits $m_RU(X,\nu_X)\to (X,\nu_X)$ and $m_RU(Y,\nu_Y)\to (Y,\nu_Y)$ become split epis after applying $U$; therefore, $U(\theta)$ is an isomorphism and so is $\theta$ since $U$ is conservative.

b) Let $(X,\nu_X)\in \sB^R$, where $X$ has the dual $X^*$. Define $\nu_{X^*}$ as the composition
\begin{multline*}
R\otimes X^*\by{1\otimes \eta} R\otimes X^*\otimes X\otimes X^*\by{1\otimes \sigma\otimes 1} R\otimes X\otimes X^*\otimes X^*\\
\by{\nu_X\otimes 1} X\otimes X^*\otimes X^*\by{\sigma\otimes 1} X^*\otimes X\otimes X^*\by{\epsilon\otimes1} X^*
\end{multline*}
where $\eta,\epsilon$ are the unit and counit of the duality structure for $(X,X^*)$ and $\sigma$ is the symmetry. One verifies that this makes $(X^*,\nu_{X^*})$ dual to $(X,\nu_X)$.

c) By a), we may view $S$ as a commutative monoid in $\sC=\sB^R$ via $\phi$, and get a strong $\otimes$-structure on $m_S(\sC):\sC\to \sC^S$. It remains to observe that $\sC^S=\sB^S$.
\end{proof}

\enlargethispage*{30pt}

This construction has a universal property \cite[Prop. 5.3.1]{bran}\footnote{I thank Kevin Coulembier for this reference.}:

\begin{cor}\label{pA.2} In the situation of Proposition \ref{pA.1}, let  $\sC$ be another $\otimes$-category with coequalisers. Then any strong $\otimes$-functor from $F:\sB\to \sC$, provided with an algebra homomorphism $\beta:F(R)\to \un_\sC$, induces a unique strong $\otimes$-functor $\tilde F:\sB^R\to \sC$ provided with a natural $\otimes$-isomorphism $F\iso \tilde F\circ m_R$, and conversely.
\end{cor}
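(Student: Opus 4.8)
The plan is to realise $\tilde F$ as the composite of two base-change functors: first transport $R$-modules to $F(R)$-modules along $F$, then extend scalars along $\beta$ from $F(R)$ down to the unit $\un_\sC$. Since $F$ is a strong $\otimes$-functor and $R$ is commutative, $F(R)$ is a commutative monoid in $\sC$, and applying $F$ to the action of a module $(X,\nu_X)\in\sB^R$ makes $F(X)$ into an $F(R)$-module; this gives a functor $F^R\colon\sB^R\to\sC^{F(R)}$ which carries free modules to free modules, so that $F^R\circ m_R\cong m_{F(R)}\circ F$. The algebra homomorphism $\beta\colon F(R)\to\un_\sC$ is a morphism of commutative monoids, so Proposition \ref{pA.1} c), applied inside $\sC$ (which inherits the hypotheses of Proposition \ref{pA.1}), yields a strong $\otimes$-functor $\beta_*\colon\sC^{F(R)}\to\sC^{\un_\sC}=\sC$ with $\beta_*\circ m_{F(R)}=\Id_\sC$. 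I would then set $\tilde F=\beta_*\circ F^R$; concretely, $\tilde F(X,\nu_X)$ is the coequaliser in $\sC$ of the two maps $F(X)\otimes F(R)\rightrightarrows F(X)$ given by the action $F(\nu_X)$ (via the symmetry and strong monoidality of $F$) and by $1\otimes\beta$.

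Granting that $\tilde F$ is well defined, the natural $\otimes$-isomorphism $F\iso\tilde F\circ m_R$ comes for free: composing the two identities above gives $\tilde F\circ m_R=\beta_*F^Rm_R\cong\beta_*m_{F(R)}F=F$, with no exactness input, since on free modules both base changes are computed by the canonical (split) presentations rather than by genuine coequalisers. This is the step that also forces $\tilde F$ to be strong $\otimes$-compatible on the image of $m_R$: by Proposition \ref{p4.2} a)'s style of reasoning, $\tilde F$ agrees with $F$ up to isomorphism on free modules.

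The main obstacle is the strong monoidality of $\tilde F$, which reduces to that of $F^R$ (as $\beta_*$ is a strong $\otimes$-functor by Proposition \ref{pA.1} c)). Here I must compare $F^R(M\otimes_R N)$ with $F^RM\otimes_{F(R)}F^RN$: the first has underlying object $F$ applied to the coequaliser \eqref{eq7.2} defining $M\otimes_R N$, while the second is, by definition, the coequaliser of $F(UM)\otimes F(R)\otimes F(UN)\rightrightarrows F(UM)\otimes F(UN)$. The two agree precisely when $F$ carries the reflexive coequaliser \eqref{eq7.2} to a coequaliser. In the additive situation at hand this reflexive coequaliser is the cokernel of the difference of the two $R$-actions, so the required preservation is exactly the right-exactness of the tensor products already assumed in Proposition \ref{pA.1}; once $F^R$ is strong monoidal, so is $\beta_*\circ F^R$. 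I expect this verification, together with the standard compatibility of extension of scalars with tensor products, to be the only genuinely technical point.

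Finally I would dispatch uniqueness and the converse simultaneously, by exhibiting $(F,\beta)\mapsto\tilde F$ and its inverse. Given any strong $\otimes$-functor $G\colon\sB^R\to\sC$, I recover $F:=G\circ m_R$, which is strong monoidal because $m_R$ is by Proposition \ref{pA.1} a), and $\beta:=G(\mu)$, where $\mu\colon m_R(R)\to\un_{\sB^R}=R$ is the multiplication viewed as a module morphism from the free module on $R$ to the unit; using $G(m_R(R))\cong F(R)$ and $G(\un_{\sB^R})=\un_\sC$ this is a map $\beta\colon F(R)\to\un_\sC$, and one checks it is an algebra homomorphism. That the round trip $G\mapsto(F,\beta)\mapsto\tilde F$ returns $G$ up to $\otimes$-isomorphism follows because every module is the coequaliser of the free modules $m_R(R\otimes X)\rightrightarrows m_R(X)$, a reflexive coequaliser that $G$ preserves for the same right-exactness reason; hence $G$ is determined by its restriction to free modules, which is $F$. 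The uniqueness assertion for $\tilde F$ is then the special case $G=\tilde F$.
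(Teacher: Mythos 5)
Your construction is the same as the paper's: its entire proof consists of applying Proposition \ref{pA.1} c) to the triple $(\sC,F(R),\un_\sC)$ and, for the converse, applying $\tilde F$ to the counit $m_R(R)=m_RU(\un_{\sB^R})\to\un_{\sB^R}$ --- exactly your $\beta_*\circ F^R$ and your $\beta=G(\mu)$. The extra detail you supply (the intermediate functor $F^R$, the comparison on free modules) fills in what the paper leaves implicit.

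One justification you give does not hold up, however. You claim that $F$ carries the coequaliser \eqref{eq7.2} to a coequaliser because of ``the right-exactness of the tensor products already assumed in Proposition \ref{pA.1}''. That hypothesis concerns the endofunctor $-\otimes R$ of $\sB$ (resp.\ of $\sC$); it says nothing about $F$, and a strong $\otimes$-functor between additive $\otimes$-categories need not preserve cokernels. The coequaliser \eqref{eq7.2} is split --- hence preserved by an arbitrary functor --- only when both arguments are free (this is precisely what the proof of Proposition \ref{pA.1} a) extracts from Lemma \ref{lA.1}), which is what makes $F\iso\tilde F\circ m_R$ and the monoidality of $\tilde F$ on the image of $m_R$ come for free; for general modules, the strong monoidality of $F^R$, and likewise the preservation by $G$ of the reflexive coequaliser $m_R(R\otimes X)\rightrightarrows m_R(X)\to X$ that you invoke for uniqueness, require an additional right-exactness property of $F$, resp.\ $G$. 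The paper's own two-line proof elides this point as well (its source, Brandenburg's Prop.\ 5.3.1, works with cocontinuous functors), so this is a gap you share with the text rather than a defect of your strategy; but the specific reason you offer for it is not the right one.
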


\begin{proof} Apply Proposition \ref{pA.1} c) to $(\sB,R,S)\equiv (\sC,F(R),\un_\sC)$. In the other direction, the counit of the adjunction $(m_R,U)$ yields a morphism 
\[m_R(R)=m_R U(\un_{\sB^R})\to \un_{\sB^R}\]
to which we apply $\tilde F$.
\end{proof}

\end{document}